\date{}
 \newcommand{\norm}[1]{\left\Vert#1\right\Vert}
\newcommand{\abs}[1]{\left\vert#1\right\vert}
\newcommand{\set}[1]{\left\{#1\right\}}
\newcommand{\brac}[1]{\left(#1\right)}
\newcommand{\scalar}[1]{\left \langle #1 \right \rangle}
\newcommand{\Real}{\mathbb{R}}
\newcommand{\E}{\mathbb{E}}
\newcommand{\eps}{\epsilon}
\newcommand{\Ric}{\mbox{\rm{Ric}}}
\newcommand{\vol}{\mbox{\rm{vol}}}
\newcommand{\II}{\text{II}}
\newcommand{\Var}{\text{\rm{Var}}}
\newcommand{\Ent}{\text{\rm{Ent}}}
\newcommand{\Hess}{\text{Hess}}
\DeclareMathOperator{\intr}{int}
\DeclareMathOperator{\supp}{supp}
\DeclareMathOperator{\Img}{Im}
\def\XXint#1#2#3{{\setbox0=\hbox{$#1{#2#3}{\int}$}
\vcenter{\hbox{$#2#3$}}\kern-.5\wd0}}
\newtheorem{thm}{Theorem}[section]
\newtheorem*{thm*}{Theorem}
\newtheorem{cor}[thm]{Corollary}
\newtheorem{lem}[thm]{Lemma}
\newtheorem{prop}[thm]{Proposition}
\newtheorem*{prop*}{Proposition}
\newtheorem*{problem*}{Problem}
\newtheorem*{conj*}{Conjecture}
\newtheorem{defn}[thm]{Definition}
\newtheorem*{dfn*}{Definition}
\theoremstyle{definition}
\newtheorem{rem}[thm]{\textbf{Remark}}
\newtheorem*{rmk*}{Remark}
\newtheorem*{fact*}{Fact}
\newtheorem{example}[thm]{Example}
\theoremstyle{proof}
\numberwithin{equation}{section}
\begin{document}

\title{Riemannian metrics on convex sets with applications to Poincar\'e and log-Sobolev inequalities}
\author{Alexander V. Kolesnikov\textsuperscript{1} and Emanuel Milman\textsuperscript{2}}
\footnotetext[1]{Faculty of Mathematics, Higher School of Economics, Moscow, Russia. 
This work was supported by the Russian Science Foundation grant 14-11-00196  at the Lomonosov Moscow State University. E-mail: Sascha77@mail.ru.}
\footnotetext[2]{Department of Mathematics, Technion - Israel
Institute of Technology, Haifa 32000, Israel. The research leading to these results is part of a project that has received funding from the European Research Council (ERC) under the European Union's Horizon 2020 research and innovation programme (grant agreement No 637851). In addition, E.M. was supported by ISF (grant no. 900/10), BSF (grant no. 2010288) and Marie-Curie Actions (grant no. PCIG10-GA-2011-304066). Email: emilman@tx.technion.ac.il.}

\begingroup    \renewcommand{\thefootnote}{}    \footnotetext{2010 Mathematics Subject Classification: 53C21,46E35,58J32.}
\endgroup

\maketitle

\begin{abstract}
Given a probability measure $\mu$ supported on a convex subset $\Omega$ of Euclidean space $(\Real^d,g_0)$, we are interested in obtaining Poincar\'e and log-Sobolev type inequalities on $(\Omega,g_0,\mu)$. To this end, we change the metric $g_0$ to a more general Riemannian one $g$, adapted in a certain sense to $\mu$, and perform our analysis on $(\Omega,g,\mu)$. The types of metrics we consider are Hessian metrics (intimately related to associated optimal-transport problems), product metrics (which are very useful when $\mu$ is unconditional, i.e. invariant under reflections with respect to the coordinate hyperplanes), and metrics conformal to the Euclidean one, which have not been previously explored in this context. Invoking on $(\Omega,g,\mu)$ tools such as Riemannian generalizations of the Brascamp--Lieb inequality and the Bakry--\'Emery criterion, and passing back to the original Euclidean metric, we obtain various weighted inequalities on $(\Omega,g_0,\mu)$: refined and entropic versions of the Brascamp--Lieb inequality, weighted Poincar\'e and log-Sobolev inequalities, Hardy-type inequalities, etc. 
Key to our analysis is the positivity of the associated Lichnerowicz--Bakry--\'Emery generalized Ricci curvature tensor, and the convexity of the manifold $(\Omega,g,\mu)$. In some cases, we can only ensure that the latter manifold is (generalized) mean-convex, resulting in additional boundary terms in our inequalities. 
 \end{abstract}

\section{Introduction}
The starting point of this work is the classical Poincar\'e-type inequality established by H.~Brascamp and E.~Lieb in \cite{BrascampLiebPLandLambda1}. It states that for any probability measure $\mu$ with smooth positive density $\exp(-V)$ on $\Real^d$, one has
\begin{equation} \label{eq:classical-BL}
D^2 V > 0  \;\;\; \Rightarrow \;\;\; \Var_\mu(f) \leq \int_{\Real^d} \scalar{ (D^2 V)^{-1} \nabla f, \nabla f} d\mu \;\;\; \forall f \in C^1(\Real^d) . 
\end{equation}
Here $\Var_\mu(f) = \int f^2 d\mu - (\int f d\mu)^2$ denotes the variance of $f$ under $\mu$. 

\medskip

The original Brascamp--Lieb inequality (\ref{eq:classical-BL}) has since been generalized and extended (e.g. \cite{Helffer-DecayOfCorrelationsViaWittenLaplacian, LedouxSpinSystemsRevisited,BobkovLedouxWeightedPoincareForHeavyTails, NguyenDimensionalBrascampLieb, KolesnikovEMilman-Reilly}) to a much more general \emph{weighted Riemannian manifold setting}. 
 A triplet $(M,g,\mu)$ is called a weighted manifold if $(M,g)$ is a smooth connected Riemannian manifold and $\mu$ is a probability measure on $M$ having smooth and positive density $\exp(-V)$ with respect to the Riemannian volume measure $\vol_g$. Its Lichnerowicz--Bakry--\'Emery generalized Ricci curvature tensor $\Ric_{g,\mu}$ \cite{Lichnerowicz1970GenRicciTensor,BakryEmery} is defined as 
\[
\Ric_{g,\mu} := \Ric_g + D_g^2 V ,
\]
where $\Ric_g$ denotes the Ricci curvature tensor and $D^2_g$ denotes the Riemannian Hessian operator. 
The simplest version of the \emph{generalized} Brascamp--Lieb inequality states that, under certain \emph{convexity} (and other technical) assumptions on $(M,g)$,\begin{equation} \label{eq:BL}
\Ric_{g,\mu} > 0 \;\;\; \Rightarrow \;\;\;  \Var_\mu(f) \leq \int_M \scalar{ \Ric_{g,\mu}^{-1} \nabla f, \nabla f } d \mu \;\;\;  \forall f \in C^1(M)  .
\end{equation} 
Here $\scalar{\cdot,\cdot}$ denotes the Riemannian metric $g$ and $\nabla = \nabla_g$ is the Riemannian Levi-Civita connection. 
We refrain at the moment from explicitly stating the aforementioned convexity assumptions, and simply collectively refer to them as the property that ``$M$ is $g$-convex". For instance, the convexity assumptions are satisfied when $(M,g)$ is oriented, complete with smooth locally-convex boundary, or when $(M,g)$ is geodesically convex without boundary; the most general conditions are formulated in Section \ref{sec:pre}, with the corresponding proof of (\ref{eq:BL}) deferred to the Appendix.

An insightful observation which we exploit throughout the paper is the following:
(\ref{eq:BL}) is in fact the Poincar{\'e} inequality for the Riemannian metric
$$
\tilde{g} = \Ric_{g,\mu} .
$$
More generally, assume that we are given another (in our applications, Euclidean) metric $g_0$, and that
$\Ric_{g,\mu}$ satisfies
$$
\Ric_{g,\mu}\ge \lambda g_0 \;\; , \;\; \lambda > 0 . 
$$
Then (\ref{eq:BL}) implies the following Poincar{\'e} (or spectral-gap) inequality on $(M,g_0,\mu)$:
\begin{equation} \label{eq:intro-Poincare}
\Var_\mu(f) \le \frac{1}{\lambda} \int_M \bigl| \nabla_{g_0} f \bigr|^2_{g_0} \ d \mu \;\;\; \forall f \in C^1(M) .
\end{equation}
Reversing the reasoning, this suggests  the following approach for estimating the Poincar\'e constant of a given weighted manifold $(M,g_0,\mu)$: find a metric $g$ on $M$ so that $M$ is $g$-convex and so that $\Ric_{g,\mu} \geq \lambda g_0$ with $\lambda > 0$ as large as possible, and apply the Brascamp-Lieb inequality (\ref{eq:BL}), yielding the spectral-gap estimate (\ref{eq:intro-Poincare}) for $(M,g_0,\mu)$.
Moreover, if $\Ric_{g_0,\mu} \geq 0$ and $M$ is $g_0$-convex, then applying the known equivalence between concentration and isoperimetry on weighted manifolds with non-negative generalized Ricci curvature \cite{EMilman-RoleOfConvexity,EMilmanGeometricApproachPartI,EMilmanIsoperimetricBoundsOnManifolds}, it is in fact enough to control the harmonic mean: \[ \Var_{\mu}(f) \le C \int_M \frac{1}{\lambda} \ d \mu  \int_M \bigl| \nabla_{g_0} f \bigr|^2_{g_0} \ d \mu ,
\] where $\lambda : M \rightarrow (0,\infty)$ is any positive function such that $\Ric_{g,\mu} \geq \lambda g_0$, and $C \geq 1$ is a universal numeric constant. When $g_0$ is Euclidean, note that the condition $\Ric_{g_0,\mu} \geq 0$ translates into the assumption that $\mu = \exp(-V) dx$ is log-concave, namely that $V$ is convex. 

\medskip

There are  various natural  ways of constructing a metric $g$ as required above. However, a substantial obstacle arising with this strategy is the potential \emph{non-completeness} of the metric $g$, \emph{non-convexity} of the boundary of $M$ in this metric, and even \emph{non-geodesic-convexity} of $M$ itself. To this end, we exploit in this work a recent generalization of the Brascamp--Lieb inequality obtained in our previous work \cite{KolesnikovEMilman-Reilly}, where in particular a suitable modification of (\ref{eq:BL}) was established under the weaker assumption that $M$ is (generalized) mean-convex (see Sections \ref{sec:pre} and \ref{sec:conformal} for definitions). For concreteness, we summarize the above discussion in the Euclidean case as follows:

\begin{problem*} Given a smooth convex set $\Omega \subset \mathbb{R}^d$ with Euclidean metric $g_0$, let $\mu$ denote the uniform probability measure on $\Omega$. Find another Riemannian metric $g$ on $\Omega$ which minimizes $\int_\Omega \norm{\Ric_{g,\mu}^{-1}}_{op,g_0} \ d \mu$, provided that the weighted manifold $(\Omega, g, \mu)$ is $g$-convex or at the very least has generalized mean-convex boundary. 
\end{problem*}

Our main motivation here is the remarkable conjecture of Kannan, Lovasz and Simonovits \cite{KLS}, which predicts that when $(\Omega,g_0,\mu)$ is as above (and more generally, when $\mu$ is a log-concave measure), the best constant $\lambda > 0$ in the spectral-gap inequality (\ref{eq:intro-Poincare}) is obtained, up to a universal numerical constant $C$ (independent of the underlying dimension $d$), by solely testing linear functionals $f$. This conjecture has been confirmed for a handful of families of convex sets $\Omega$, mainly under extensive symmetry assumptions; confirming it for additional families or improving the best known (dimension-dependent) general estimates on $C$ are extremely challenging problems - see e.g. \cite{SpanishBook} and the references therein.  

\medskip

The above stated problem naturally leads to potentially very interesting (and difficult) variational problems on extremal metrics
(cf. \cite{Lions-FiniteDimKahlerEinstein}).
In this work we do not study this problem in full generality, but rather concentrate on 
several natural families of metrics which can provide good spectral-gap estimates.
Among them are the following ones (the symbols $D^2, \nabla$ are understood as derivatives with  respect to the standard Euclidean connection):
\begin{enumerate}
\item Hessian metrics of the form $g = D^2 \Phi$, where $\Phi$ is a strongly convex potential. These metrics naturally arise when $\mu$ is pushed-forward onto another probability measure $\nu$, by means of the optimal-transport map $x \mapsto \nabla \Phi(x)$ (see \cite{Kolesnikov-HessianMetricsArisingFromOT, KlartagMomentMap,Klartag-PayneWeinbergerViaMomentMeasures, KlartagKolesnikov-EigenvalueDistribution} for historical comments). In the past decades, the theory of optimal-transport has played a key role in the advancement of our understanding of isoperimetric and functional inequalities (see e.g. \cite{BogachevKolesnikov-OTSurvey,VillaniTopicsInOptimalTransport,VillaniOldAndNew,BGL-Book}), and it continues to play a natural role here as well. 
\item Product metrics of the form $g = \sum_{i=1}^d \Phi_i''(x_i) (dx^i)^2$. These metrics are very useful for studying \emph{unconditional} log-concave measures $\mu$, invariant under reflections with respect to the coordinate hyperplanes. 
\item Conformal metrics to the Euclidean one $g = e^{2 \varphi} \sum_{i=1}^d (dx^i)^2$. Conformal changes of metric are very important and useful transformations in Geometry and Mathematical Physics, which to the best of our knowledge, have not been previously exploited in our context (apart from very specific situations, like the known conformal relation between Euclidean space and the sphere, cf. \cite{BGL-Book}). 
\end{enumerate} 

For completeness, we also mention the possibility to use the Hessian metric arising from a potential given by the logarithmic Laplace transform of $\mu$, as initiated in \cite{EldanKlartagThinShellImpliesSlicing}, but this will not be further investigated in this work. 

\subsection{Refined Brascamp--Lieb Inequality}

In Section \ref{sec:refined-BL}, we obtain a refinement of the original Brascamp--Lieb inequality (\ref{eq:classical-BL}), by employing the Hessian metric $g = D^2 \Phi$ associated to an optimal-transport map $x \mapsto \nabla \Phi(x)$ pushing forward $\mu$ onto an auxiliary log-concave probability measure $\nu = \exp(-W) dx$. Choosing $\nu = \mu$ recovers the original Brascamp-Lieb inequality, and any other choice of log-concave measure $\nu$ recovers the original inequality up to a factor of $2$, suggesting that we might as well try and test other candidates $\nu$. As concrete examples, we analyze two cases: 
\begin{enumerate}
\item
Setting $\nu$ to be the uniform measure on some (any) convex set, we obtain a $(-d)$-dimensional version of the Brascamp--Lieb inequality for $\mu$,  previously considered by Bobkov--Ledoux \cite{BobkovLedouxWeightedPoincareForHeavyTails}, Nguyen \cite{NguyenDimensionalBrascampLieb} and the authors \cite{KolesnikovEMilman-Reilly} (however, we miss here the known sharp constant by a factor slightly less than $2$). 
\item
Given $\nu$, one may search for a probability measure $\mu = \exp(-V) dx$ so that the optimal-transport map $\nabla \Phi$ pushing forward $\mu$ onto $\nu$ satisfies $\Phi = V$. 
In that case, the resulting non-linear elliptic equation is the so-called (real-valued) K\"{a}hler--Einstein equation, investigated by Berman, Berndtsson, Cordero-Erausquin, Klartag and the first named author \cite{BermanBerndtsson-RealMongeAmpere, CorderoKlartag-MomentMeasures,KlartagMomentMap,Klartag-PayneWeinbergerViaMomentMeasures,KlartagKolesnikov-EigenvalueDistribution}. Building on their results, we obtain a version of the Brascamp--Lieb inequality for $\nu$ in the case it is supported inside a Euclidean ball of radius $R$, which interpolates between (\ref{eq:classical-BL}) and the Payne--Weinberger classical estimate \cite{PayneWeinberger} for the spectral-gap on a convex domain (extending Klartag's approach from \cite{Klartag-PayneWeinbergerViaMomentMeasures}). 
\end{enumerate}

\subsection{Entropic Brascamp--Lieb Inequality}

Besides obtaining Poincar\'e (\ref{eq:intro-Poincare}) or weighted Poincar\'e (\ref{eq:BL}) inequalities, we are also interested in obtaining (weighted) log-Sobolev inequalities. By the celebrated Bakry--\'Emery criterion \cite{BakryEmery} (see Section \ref{sec:pre} for missing definitions), whenever $M$ is $g$-convex and
\begin{equation} \label{eq:BE-cond}
\Ric_{g,\mu} \ge \rho g \;\; , \;\; \rho > 0 ,
\end{equation}
then $(M,g,\mu)$ satisfies the following log-Sobolev inequality:
\begin{equation} \label{EBL}
\Ent_{\mu}(f) \le \frac{2}{\rho}\int_M  \abs{\nabla_g f}^2 d \mu \;\;\; \forall f \in C^1(M) .
\end{equation}
Now when $\mu = \exp(-V) dx$ is a strongly log-concave measure supported on a convex subset $\Omega \subset \Real^d$, consider the Hessian metric $g = D^2 V$. If  $(\Omega,g = D^2 V , \mu = \exp(-V) dx)$ satisfies (\ref{eq:BE-cond}) and $\Omega$ is $g$-convex, it follows that
\begin{equation} \label{EBL0}
\Ent_\mu(f) \le \frac{2}{\rho}\int_\Omega \langle (D^2 V)^{-1} \nabla f, \nabla f \rangle d \mu \;\;\; \forall f \in C^1(\Omega),
\end{equation}
which is an entropic version of the original Brascamp--Lieb inequality (\ref{eq:classical-BL}). It was shown by S.~Bobkov and M.~Ledoux \cite{BobkovLedoux} that contrary to (\ref{eq:classical-BL}), (\ref{EBL0}) cannot hold for all (strongly) log-concave measures (with any $\rho > 0$). However, the above strategy allows us to obtain in Section \ref{sec:entropic-BL} a sufficient condition for (\ref{EBL0}) to hold  (which is different from the one derived in \cite{BobkovLedoux}). In particular, we obtain weighted log-Sobolev inequalities for the product measures $\exp(-\sum_{i=1}^d x_i^q) dx$ on $\Real_+^d$, $q \in (1,\infty)$, recovering when $q \in (1,2]$ a case treated by D.~Bakry in \cite{Bakry-JacobiSemigroups}. 

\subsection{Unconditional log-concave measures}

Non-trivial results may be obtained even by employing product metrics, the simplest amongst Hessian metrics. For instance, when $\mu = \exp(-V) dx$ is the conditioning of an unconditional log-concave probability measure onto the principle orthant $\Real_+^d$, we recover the following Poincar\'e-type inequality due to B.~Klartag \cite{KlartagMomentMap} (which implies the so-called thin-shell estimate for unconditional convex bodies):
\[
\Var_\mu(f) \le 4 \int \sum_{i=1}^d x^2_i f^2_{x_i} d \mu \;\;\; \forall f \in C^1(\Real_+^d) .
\]
Another classical result we recover is the modified log-Sobolev inequality for the $d$-dimensional (one-sided) exponential measure $\nu = \exp(-\sum_{i=1}^d x_i)$ on $\Real_+^d$ (see \cite{BobkovLedouxModLogSobAndPoincare}):
\[
\Ent_\nu (f^2) \le 4 \int \sum_{i=1}^d x_i  f^2_{x_i} d \nu \;\;\; \forall f \in C^1(\Real_+^d) .
\]
Generalizations of these weighted Poincar\'e and log-Sobolev inequalities to the case when $V_{x_i} \geq 0$ or $V_{x_i} \geq \lambda > 0$ are obtained in Section \ref{sec:unconditional}. It is also possible to transfer the resulting inequalities from unconditional log-concave measures to uniform measures on unconditional convex sets, assuming that the normal to their boundary is approximately diagonal (such as for the unit-ball of $\ell_1^d$) - see Theorem \ref{l1-type}.

\subsection{Conformal Metrics}

Conformal metrics enable us more flexibility on one hand, but the price we quite often pay is that the resulting manifold's boundary is no longer locally-convex. To address this issue, we employ a suitable generalization of the Brascamp--Lieb inequality for (generalized) mean-convex domains, obtained in our previous work \cite{KolesnikovEMilman-Reilly}. Moreover, we will need to use a dimensional version of the latter inequality, with a \emph{negative} generalized dimension $N < 0$. For simplicity, we consider the uniform probability measure $\lambda_\Omega$ on a convex set $\Omega \subset \Real^d$ with the standard Euclidean metric $g_0$. Equipping $\Omega$ with (a suitable regularization of) the metric $g = |x|^{-2\theta} g_0$, we verify that $\Ric_{g,\lambda_\Omega} \geq c d^2 g_0$ for some constant $c > 0$ and an appropriate choice of $\theta > 0$. We thus obtain in Section \ref{sec:conformal} a Hardy-type inequality with additional boundary term, of the form
\[
\frac{1 - N}{\mbox{\rm{Vol}}(\Omega)} \int_{\partial \Omega} 
\frac{1}{ \frac{d-N}{2} \frac{ \langle  x, n\rangle}{ \abs{x}^2}  -N H_0(x)   } (f -C)^2 d \mathcal{H}^{d-1}.
\]
Here $C \in \Real$ is an arbitrary constant,  $n$ is the outer unit normal to $\partial \Omega$, $H_0(x)$ is the Euclidean mean-curvature of $\partial \Omega$ at $x$, and $\mathcal{H}^{d-1}$ denotes the $(d-1)$--dimensional Hausdorff measure. For some other results on Hardy-type inequalities obtained by the authors using a different approach see \cite{KolesnikovEMilman-HardyKLS}. Under an additional condition on the Euclidean second-fundamental form of the boundary:
\[
\exists \theta \in (0,1) \;\;\; \II_{\partial \Omega, g_0} \ge  \theta \frac{\scalar{x, n}}{\abs{x}^2} \cdot \mbox{\rm{Id}}|_{\partial \Omega},
\]
it turns out that $\partial \Omega$ is locally-convex in the $g$ metric, yielding weighted Poincar\'e and log-Sobolev inequalities on $(\Omega,g_0,\lambda_{\Omega})$. 

\medskip

\noindent \textbf{Acknowledgments.}
We would like to thank Bo'az Klartag for stimulating discussions, and the anonymous referee for thoroughly reading the manuscript and providing comprehensive comments.

\section{Preliminaries} \label{sec:pre}

In this section, we collect various preliminaries required for this work. Certain formal statements, well-known to experts, are rigorously formulated (with the technical proofs deferred to the Appendix).  

\subsection{Weighted Riemannian Manifolds}

Let $M^d$ be a $d$-dimensional topological manifold, without or with boundary $\partial M$, whose interior $\intr(M) := M \setminus \partial M$ is endowed with a smooth differentiable structure and smooth Riemannian metric $g$. Let $\mu$ be a measure on $M$ having smooth and positive density $\exp(-V)$ with respect to the Riemannian volume measure $\vol_g$ on $\intr(M)$ (so that in particular $\mu(\partial M) = 0$). The triplet $(M,g,\mu)$ is called a weighted Riemannian manifold. 
The manifold may be compact or non-compact, bounded or unbounded, but throughout this work we will always assume that $M$ is connected and that $\mu$ is a probability measure.
Many of the tools we employ in this work require the following additional assumptions:

\begin{dfn*}[\textbf{Geometric Convexity Assumptions}]
Either:
\begin{enumerate}
\item
$(M,g)$ is a complete and oriented Riemannian manifold, $\partial M$ (if non-empty) is smooth and locally-convex, and the density $\exp(-V)$ remains smooth and positive up to and including the boundary ; or:
\item
$\intr(M)$ is geodesically convex. \end{enumerate}
\end{dfn*}
\noindent
Recall that $\partial M$ is called locally-convex if its second fundamental form $\II_{\partial M}$ is positive semi-definite, where $\II_{\partial M}(X,Y) = g(\nabla_X n,Y)$ for $X,Y \in T \partial M$, $\nabla = \nabla_g$ denotes the Levi-Civita connection and $n = n_g$ denotes the outer unit normal to $\partial M$. A Riemannian manifold is called geodesically convex if there exists a distance-minimizing geodesic connecting any two given points. It is known \cite{BishopInBayleRosales} that geodesic convexity implies local convexity of the boundary, but not vice-versa in general (even for complete manifolds). 

\medskip
A key role in this work will be played by the Lichnerowicz--Bakry--\'Emery generalized Ricci curvature tensor, which is defined as
\[
\Ric_{g,\mu} := \Ric_g + D_g^2 V ,
\]
where $\Ric_g$ denotes the usual (geometric) Ricci curvature tensor, and $D_g^2 = \mbox{Hess}_g$ is the Riemannian Hessian operator. It will be useful to express this operator in local coordinates:
\begin{equation}
\label{hessformula}
\mbox{Hess}_g (f)_{ij}= \partial^2_{x_i x_j} f - \Gamma^{k}_{ij} \partial_{x_k} f ,
\end{equation}
where the Christoffel symbols are given by 
\begin{equation}
\label{christoff}
\Gamma^{m}_{ij} = \frac{1}{2} g^{mk} \Bigl( \frac{\partial g_{ki}}{\partial x_j} + \frac{\partial g_{kj}}{\partial x_i} -  \frac{\partial g_{ij}}{\partial x_k}\Bigr).
\end{equation}

Throughout this work we employ Einstein summation convention. Given a $2$-covariant tensor $A_{\alpha,\beta}$, its inverse is the $2$-contravariant tensor $(A^{-1})^{\beta,\gamma}$ given by
\[
A_{i,j} (A^{-1})^{j,k} = \delta_i^k . 
\]
As usual, the (contravariant) inverse $g^{-1}$ to the (covariant) metric $g = g_{\alpha,\beta}$ is also denoted by $g = g^{\beta,\gamma}$, so that $g_{i,j} g^{j,k} = \delta_i^k$. 
We freely raise and lower indices by contracting with the metric, denoting different covariant and contravariant versions of a tensor in the same manner. 
To emphasize the dependence or independence on the metric, we may use $\nabla f = \nabla_\alpha f$ for the (covariant) covector $d f$, while using $\nabla_g f = \nabla^\alpha f = g^{\alpha , \beta} \nabla_\beta f$ for the Riemannian (contravariant) gradient vector. 
We will frequently use $\scalar{\cdot,\cdot}$ to denote $g_{\alpha,\beta}$ and $\scalar{\cdot,\cdot}_g$ to denote $g^{\alpha,\beta}$; their diagonals will be denoted by $\abs{\cdot}^2$ and $\abs{\cdot}^2_g$, respectively.  
Similarly, $g_\alpha^\beta$, which is nothing but a pairing via the Kronecker delta  $\delta_{\alpha}^\beta$, will also be denoted by $\scalar{\cdot,\cdot}$. Our notation is aimed at taking note of the dependence on the metric $g$, while avoiding being too excessive, and should not create any ambiguity. For instance,
\begin{align*}
\abs{\nabla_g f}^2 & = \scalar{\nabla_g f,\nabla_g f} = g( \nabla_g f, \nabla_g f) = g_{i,j} \nabla^i f \nabla^j f \\
& = g_i^j \nabla^i f \nabla_j f = \scalar{g^{-1} \nabla f, \nabla f} = \scalar{\nabla_g f , \nabla f} \\
&= g^{i,j} \nabla_i f \nabla_j f = g^{-1}(\nabla f , \nabla f) 
 = \scalar{\nabla f , \nabla f}_g = \abs{\nabla f}_g^2 ,
\end{align*}
and
\begin{align*}
 \scalar{\Ric_{g,\mu}^{-1} \nabla_g f , \nabla_g f} & = g_{i,j} (\Ric_{g,\mu}^{-1})^i_k \nabla^k f \nabla^j f  \\
 & = (\Ric_{g,\mu}^{-1})^{i,j} \nabla_i f \nabla_j f = \Ric_{g,\mu}^{-1}(\nabla f,\nabla f) = \scalar{\Ric_{g,\mu}^{-1} \nabla f,\nabla f} \\
&  = g^{i,j} (\Ric_{g,\mu}^{-1})_i^k \nabla_k f \nabla_j f  = \scalar{\Ric_{g,\mu}^{-1} \; \nabla f , \nabla f}_g  .
 \end{align*}
 
When $M \subset \Real^d$, the symbols $\nabla$, $D^2$, $\det$, etc.\ are understood as the gradient, Hessian, determinant, etc. with respect to the standard Euclidean connection. The same operations with respect to the metric $g$ will be denoted with a subscript $g$: $\nabla_g, D^2_g$, etc.
Throughout this work, we identify between $\Real^d$ and its tangent spaces $T_x \Real^d$. We use $f_{x_i}$ to denote the partial derivative of $f$ with respect to $x_i$. A measure $\mu$ supported on $M \subset \Real^d$ is called log-concave if $M$ is convex and on $M$ we have $\mu = \exp(-V) dx$ with $V : M \rightarrow \Real$ convex. In the one-dimensional case $M \subset \Real$, all tensors are simply identified with scalar functions. 

\subsection{Bakry--\'Emery Criterion}

We will frequently employ the following classical criterion for the validity of the log-Sobolev inequality due to D. Bakry and M. \'Emery \cite{BakryEmery,BGL-Book}. In the case of a locally-convex boundary, this follows from the work of Qian \cite{QianGradientEstimateWithBoundary}, and in the case that the manifold is only assumed geodesically convex, this follows by a reduction to the one-dimensional case using the recent extension of the localization method to the Riemannian setting due to B. Klartag \cite{KlartagLocalizationOnManifolds} (the details are very similar to the ones appearing in the proof of Theorem \ref{thm:BL} below, and are omitted).

\begin{thm}[Bakry--\'Emery log-Sobolev Criterion] \label{thm:BE}
Let $(M,g,\mu)$ denote a weighted Riemannian manifold satisfying the Geometric Convexity Assumptions. 
If
\[
\Ric_{g,\mu} \geq \rho g ,
\]
then for all $f \in C^1(M)$ we have
\[
\Ent_\mu (f^2) \leq \frac{2}{\rho} \int \abs{\nabla_g f}^2 d\mu .
\]
\end{thm}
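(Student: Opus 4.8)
The plan is to reduce Theorem~\ref{thm:BE} to the classical curvature--dimension calculus of Bakry and \'Emery, treating the two alternatives in the Geometric Convexity Assumptions separately. In both cases the central object is the weighted (Witten) Laplacian $L := \Delta_g - \scalar{\nabla_g V, \nabla_g \cdot}$, which is symmetric and non-positive on $L^2(\mu)$ and generates the reversible diffusion semigroup $(P_t)_{t \geq 0}$ associated with the Dirichlet form $f \mapsto \int \abs{\nabla_g f}^2 \, d\mu$; when $\partial M \neq \emptyset$ one imposes Neumann boundary conditions, which is exactly what the density $\exp(-V)$ being smooth and positive up to the boundary makes legitimate. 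The first step is to record the Bochner--Weitzenb\"ock identity for the iterated carr\'e du champ $\Gamma_2(f) = \frac12 L \abs{\nabla_g f}^2 - \scalar{\nabla_g f, \nabla_g L f}$, namely $\Gamma_2(f) = \norm{\Hess_g f}^2 + \Ric_{g,\mu}(\nabla_g f, \nabla_g f)$ on $\intr(M)$, and --- crucially --- its integrated (Reilly-type) version on a manifold with boundary,
\[
\int \Gamma_2(f)\, d\mu = \int \brac{ \norm{\Hess_g f}^2 + \Ric_{g,\mu}(\nabla_g f,\nabla_g f) }\, d\mu + \int_{\partial M} \II_{\partial M}(\nabla_g f,\nabla_g f)\, \exp(-V)\, d\vol_{\partial M},
\]
valid for $f$ satisfying the Neumann condition. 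Since $\II_{\partial M} \geq 0$ by local convexity and $\norm{\Hess_g f}^2 \geq 0$, the hypothesis $\Ric_{g,\mu} \geq \rho g$ yields the integrated curvature--dimension inequality $\int \Gamma_2(f)\, d\mu \geq \rho \int \abs{\nabla_g f}^2\, d\mu$, i.e. $\mathrm{CD}(\rho,\infty)$ in the form relevant for Neumann semigroups.

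From here the argument is the textbook one. The $\mathrm{CD}(\rho,\infty)$ condition upgrades to the pointwise gradient estimate $\abs{\nabla_g P_t f}^2 \leq e^{-2\rho t} P_t(\abs{\nabla_g f}^2)$; combined with the de~Bruijn-type identity $\frac{d}{dt} \Ent_\mu(P_t f) = - \int \frac{\abs{\nabla_g P_t f}^2}{P_t f}\, d\mu$ and ergodicity $P_t f \to \int f\, d\mu$ as $t \to \infty$, integrating in $t$ from $0$ to $\infty$ gives $\Ent_\mu(f) \leq \frac{2}{\rho} \int \frac{\abs{\nabla_g f}^2}{f}\, d\mu$ for positive $f$; applying this with $f^2$ in place of $f$ produces the stated inequality. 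The passage from smooth compactly supported (or smooth, bounded, with bounded derivatives) $f$ to general $f \in C^1(M)$ is by routine truncation and mollification together with monotone/dominated convergence.

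For the second alternative --- $\intr(M)$ geodesically convex but possibly incomplete or with merely convex, non-smooth boundary --- the semigroup manipulations above cannot be run directly, and instead I would invoke Klartag's extension of the localization (needle-decomposition) method to the Riemannian setting \cite{KlartagLocalizationOnManifolds}, exactly as in the proof of Theorem~\ref{thm:BL} below. The log-Sobolev inequality for $(M,g,\mu)$ reduces to the one-dimensional log-Sobolev inequality, with the same constant $2/\rho$, on each needle --- a geodesic segment $I$ equipped with a density $\exp(-W)$ satisfying $W'' \geq \rho$ --- which is the classical one-dimensional Bakry--\'Emery statement.

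The main obstacle in both cases is analytic rather than conceptual. In the first case it is justifying the integration by parts leading to the Reilly formula and all the semigroup identities up to and including the boundary on a possibly non-compact $M$ (conservativeness of $P_t$, density of admissible test functions in the relevant domains, legitimacy of differentiating $t \mapsto \Ent_\mu(P_t f)$), which is precisely where the completeness, orientability, and up-to-the-boundary smoothness of $\exp(-V)$ enter. In the second case it is the measurability and regularity of the needle decomposition, supplied by Klartag's machinery. These technical points we defer to \cite{BakryEmery,BGL-Book,QianGradientEstimateWithBoundary,KlartagLocalizationOnManifolds} and to the Appendix.
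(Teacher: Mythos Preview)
Your proposal follows essentially the same route as the paper: for Case~(1) you appeal to the Bakry--\'Emery semigroup machinery together with Qian's gradient estimate for the Neumann semigroup on manifolds with locally-convex boundary, and for Case~(2) you invoke Klartag's localization to reduce to the one-dimensional log-Sobolev inequality --- precisely the two references and the strategy indicated in the paragraph preceding Theorem~\ref{thm:BE} (the paper itself omits all details). One small caveat: the displayed ``integrated Reilly-type'' identity you wrote is not quite right as stated, since the pointwise Bochner formula $\Gamma_2(f) = \norm{\Hess_g f}^2 + \Ric_{g,\mu}(\nabla_g f,\nabla_g f)$ already holds on $\intr(M)$, so integrating it produces no boundary term; the $\II_{\partial M}$ contribution arises instead when relating $\int (Lf)^2\,d\mu$ to $\int \Gamma_2(f)\,d\mu$ (compare the Reilly formula in the Appendix), and it is the latter inequality, or directly Qian's pointwise gradient bound, that feeds into the semigroup argument.
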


\noindent
Recall that the entropy functional $\Ent_\mu(h)$ of a non-negative measurable function $h$ with respect to the probability measure $\mu$ is defined as
\[
\Ent_{\mu}(h) := \int h \log h \; d\mu - \int h \; d\mu \cdot \log \int h \; d\mu .
\]

\subsection{Generalized Brascamp--Lieb Inequality}

The following theorem is a generalized version of the classical Brascamp--Lieb inequality from the Euclidean setting (\ref{eq:classical-BL}), which will play a fundamental role in this work. Modulo the technical assumptions hidden in the Geometric Convexity Assumptions, this generalized version on weighted manifolds is well-known to experts. Under assumption (1),  if the manifold is in addition assumed compact, this result was proved e.g. in \cite{KolesnikovEMilman-Reilly}; under assumption (2), the assertion follows as before from a reduction to the one-dimensional case following Klartag \cite{KlartagLocalizationOnManifolds}. We defer the details of the proof to the Appendix.

\begin{thm}[Generalized Brascamp--Lieb Inequality] \label{thm:BL}
Let $(M,g,\mu)$ denote a weighted Riemannian manifold satisfying the Geometric Convexity Assumptions. Assume that $\Ric_{g,\mu} > 0$ on $M$. 
Then for all $f \in C^1(M)$,
\[
\Var_\mu(f) \leq \int_M \scalar{\Ric_{g,\mu}^{-1} \nabla f,\nabla f} d\mu .
\]
\end{thm}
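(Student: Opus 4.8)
The plan is to establish the generalized Brascamp--Lieb inequality via the Bochner--Lichnerowicz--Weitzenb\"ock formula and an associated Neumann-type boundary value problem, reducing the variance estimate to a spectral statement about the weighted Laplacian $L_{g,\mu} = \Delta_g - \nabla_g V \cdot \nabla_g$. First I would reduce to $f \in C^\infty(M)$ with $\int f \, d\mu = 0$ by density, and assume for the moment that $M$ is compact with smooth locally-convex boundary (case (1) of the Geometric Convexity Assumptions, with the completeness/non-compact case handled by the usual exhaustion argument, approximating $M$ by an increasing sequence of compact domains and using the positivity of $\Ric_{g,\mu}$ to control the relevant quantities). I would then solve the weighted Poisson equation $L_{g,\mu} u = f$ on $M$ with Neumann boundary condition $\partial_n u = 0$ on $\partial M$; this is solvable precisely because $\int f \, d\mu = 0$, and $u \in C^\infty$ by elliptic regularity up to the boundary. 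With this $u$ in hand, $\Var_\mu(f) = \int f u \, d\mu = \int (L_{g,\mu} u) u \, d\mu = \int |\nabla_g u|^2 \, d\mu$ after integration by parts (the boundary term vanishes by the Neumann condition).

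The heart of the argument is then the integrated Bochner formula for the weighted manifold: for the above $u$,
\[
\int_M (L_{g,\mu} u)^2 \, d\mu = \int_M \norm{D^2_g u}_{HS}^2 \, d\mu + \int_M \Ric_{g,\mu}(\nabla_g u, \nabla_g u) \, d\mu + \int_{\partial M} \II_{\partial M}(\nabla_g u, \nabla_g u) \, d\mathcal{H}^{d-1},
\]
where I have already used the Neumann condition $\partial_n u = 0$ to simplify the boundary contribution to only the second-fundamental-form term (the cross term $\partial_n u \cdot (\text{tangential derivatives})$ drops out). Local convexity of $\partial M$ makes the boundary integral non-negative, so it can be discarded. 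Next, using Cauchy--Schwarz in the form $\norm{D^2_g u}_{HS}^2 \geq \frac{1}{d}(\Delta_g u)^2$ is \emph{not} quite what is needed here; instead I would directly estimate: since $\Ric_{g,\mu} > 0$, write $\nabla_g u = \Ric_{g,\mu}^{-1} w$ for the covector $w = \Ric_{g,\mu} \nabla_g u = \nabla f$-type object — more precisely, I would invoke the pointwise inequality, valid whenever $A := \Ric_{g,\mu}$ is positive-definite, that for any vector $v$ and any symmetric $S$ with $S \geq A$, one has $2\scalar{v, \nabla f} - \scalar{A v, v} \leq \scalar{A^{-1} \nabla f, \nabla f}$ with equality at $v = A^{-1}\nabla f$. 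Applying the Bochner identity after dropping $\norm{D^2_g u}_{HS}^2 \geq 0$ and the boundary term gives $\int (L_{g,\mu}u)^2 d\mu \geq \int \Ric_{g,\mu}(\nabla_g u, \nabla_g u) \, d\mu$, and combining with $L_{g,\mu} u = f$ and the variational characterization yields $\Var_\mu(f)^2 = (\int |\nabla_g u|^2 d\mu)^2 = (\int \scalar{\nabla f, \nabla_g u} d\mu)^2 \leq \int \scalar{\Ric_{g,\mu}^{-1}\nabla f, \nabla f} d\mu \cdot \int \Ric_{g,\mu}(\nabla_g u, \nabla_g u) d\mu \leq \int \scalar{\Ric_{g,\mu}^{-1}\nabla f,\nabla f} d\mu \cdot \int f^2 d\mu$; dividing through by $\Var_\mu(f)$ (after the standard reduction to $\int f d\mu = 0$ so that $\int f^2 d\mu = \Var_\mu(f)$) gives the claim.

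For case (2), where $\intr(M)$ is only assumed geodesically convex (possibly without the completeness/boundary-regularity structure), I would follow Klartag's Riemannian localization method \cite{KlartagLocalizationOnManifolds}: a mean-zero integrable $f$ on $(M,g,\mu)$ can be decomposed via a needle (one-dimensional) disintegration of $\mu$ along geodesics into conditional measures $\mu_\alpha$ supported on minimizing geodesics $\gamma_\alpha$, each of which is log-concave with respect to a one-dimensional weight whose second derivative is controlled below by (the pullback of) $\Ric_{g,\mu}$, and such that $\int f \, d\mu_\alpha = 0$ for a.e. $\alpha$. On each needle the inequality becomes the classical one-dimensional weighted Brascamp--Lieb/Poincar\'e inequality $\Var_{\mu_\alpha}(f|_{\gamma_\alpha}) \leq \int (\Ric_{g,\mu}|_{\gamma_\alpha})^{-1} (f')^2 \, d\mu_\alpha$ — the scalar case of the theorem, provable by the one-dimensional Hardy-type argument or directly — and integrating over $\alpha$ recovers the global statement, using that $|f'|^2$ along the needle is bounded by $\scalar{\Ric_{g,\mu}^{-1}\nabla f, \nabla f}$ evaluated appropriately (here one uses that the needle direction is a unit vector so the relevant one-dimensional Ricci bound dominates the full tensor's restriction).

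I expect the main obstacle to be the justification of the analytic steps under the weaker hypotheses: in the non-compact complete case, establishing solvability and adequate decay of the solution $u$ to $L_{g,\mu} u = f$ (so that the integration by parts and Bochner formula have no contribution from infinity) requires care and uses the positivity of $\Ric_{g,\mu}$ together with a careful exhaustion; and in the geodesically-convex case, the technical heart is verifying that Klartag's localization produces needles along which the relevant curvature lower bound is inherited and the conditional measures have the stated log-concavity — but since this parallels the treatment of the analogous reduction elsewhere in the literature (and, as the excerpt notes, the details mirror those for Theorem \ref{thm:BE}), I would cite \cite{KlartagLocalizationOnManifolds} and defer the full verification to the Appendix rather than reproduce it here. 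The algebraic/pointwise inequality for the quadratic form and the discarding of the non-negative Hessian and second-fundamental-form terms are routine once the Bochner identity is in place.
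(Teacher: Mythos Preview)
Your proposal matches the paper's strategy: the generalized Reilly/Bochner identity together with Cauchy--Schwarz for Case~(1), and Klartag's Riemannian localization reducing to the one-dimensional Brascamp--Lieb inequality for Case~(2). The one substantive difference is in the non-compact complete instance of Case~(1): rather than exhausting by compact subdomains (whose artificial interior boundaries need not be locally convex, so the $\II_{\partial M}$-term in the Reilly formula is no longer automatically non-negative) or solving the global Neumann problem and controlling decay at infinity, the paper invokes essential self-adjointness of $-\Delta_{g,\mu}$ on $C^\infty_{c,\mathrm{Neu}}(M)$ to produce a sequence $u_k$ of compactly-supported smooth Neumann functions with $-\Delta_{g,\mu} u_k \to f$ in $L^2(\mu)$; since each $u_k$ has compact support, the Reilly identity applies on all of $M$ with no contribution from infinity, and one passes to the limit only after the Cauchy--Schwarz step. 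This cleanly sidesteps both obstacles you anticipate. (A minor bookkeeping slip: the line ``$\Var_\mu(f) = \int f u \, d\mu$'' is not correct as written, but your subsequent chain $\bigl(\int \scalar{\nabla f,\nabla u}\, d\mu\bigr)^2 \le \int \scalar{\Ric_{g,\mu}^{-1}\nabla f,\nabla f}\, d\mu \cdot \int \Ric_{g,\mu}(\nabla u,\nabla u)\, d\mu \le \int \scalar{\Ric_{g,\mu}^{-1}\nabla f,\nabla f}\, d\mu \cdot \int (\Delta_{g,\mu} u)^2\, d\mu$ is exactly the paper's argument.)
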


As an immediate corollary, which we state to emphasize the view-point employed throughout this entire work, we obtain the following way to upper bound the variance of a given function:

\begin{cor} \label{BLRiemann}
Given a smooth differentiable manifold $M$ endowed with an absolutely continuous probability measure $\mu$ having smooth positive density, we have
for all $f \in C^1(M)$,
\[
\Var_{\mu}(f) \leq \inf_g \set{ \int_M \scalar{ \Ric_{g,\mu}^{-1} \nabla f,\nabla f} d\mu } ~,
\]
where the infimum is taken over all Riemannian metrics $g$ on $M$ so that the assumptions of Theorem \ref{thm:BL} are satisfied. 
\end{cor}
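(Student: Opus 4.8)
The plan is essentially trivial, as Corollary \ref{BLRiemann} is an immediate restatement of Theorem \ref{thm:BL} together with the observation that the left-hand side $\Var_\mu(f)$ does not depend on the choice of metric $g$.

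\begin{proof}[Proof sketch]
First I would fix any $f \in C^1(M)$. Let $\mathcal{G}$ denote the (possibly empty) set of Riemannian metrics $g$ on $M$ for which the hypotheses of Theorem \ref{thm:BL} are satisfied, i.e. such that $(M,g,\mu)$ is a weighted Riemannian manifold satisfying the Geometric Convexity Assumptions and $\Ric_{g,\mu} > 0$ on $M$. (Note that whether $(M,g,\mu)$ is a \emph{weighted} manifold in the sense of Section \ref{sec:pre} depends on $g$, since the density of $\mu$ is taken with respect to $\vol_g$; the assumption that $\mu$ has a smooth positive density with respect to some fixed smooth volume form guarantees that it has a smooth positive density with respect to $\vol_g$ for every smooth $g$, so this causes no difficulty.) If $\mathcal{G} = \emptyset$, the infimum on the right-hand side is $+\infty$ by convention and there is nothing to prove. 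Otherwise, for each $g \in \mathcal{G}$, Theorem \ref{thm:BL} applies and yields
\[
\Var_\mu(f) \leq \int_M \scalar{\Ric_{g,\mu}^{-1} \nabla f,\nabla f} d\mu .
\]
The key point is that the left-hand side is a fixed real number independent of $g$ (it only involves $f$ and $\mu$), so taking the infimum over $g \in \mathcal{G}$ on the right-hand side preserves the inequality:
\[
\Var_\mu(f) \leq \inf_{g \in \mathcal{G}} \int_M \scalar{\Ric_{g,\mu}^{-1} \nabla f,\nabla f} d\mu ,
\]
which is exactly the claimed bound.
\end{proof}

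There is no real obstacle here; the only point requiring a word of care is the remark in parentheses above, namely that the notion of ``$\mu$ being a weighted measure'' and the expression $\scalar{\Ric_{g,\mu}^{-1} \nabla f, \nabla f}$ both depend on $g$, whereas the variance does not — and it is precisely this asymmetry that makes the passage to the infimum legitimate. The genuine mathematical content is entirely contained in Theorem \ref{thm:BL}; Corollary \ref{BLRiemann} merely packages it into the form used repeatedly in the sequel, where one gets to \emph{choose} the most advantageous metric $g$ (Hessian, product, or conformal) for the measure $\mu$ at hand.
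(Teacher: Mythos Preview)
Your proof is correct and matches the paper's own treatment: the corollary is stated there as an ``immediate corollary'' of Theorem \ref{thm:BL} with no separate proof, precisely because it is just the observation that $\Var_\mu(f)$ is independent of $g$ while the right-hand side is not, so one may take the infimum. Your parenthetical remark about the density being smooth with respect to $\vol_g$ for every smooth $g$ is a nice point of care that the paper leaves implicit.
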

\noindent
Note that generically, an optimal metric above will depend on the particular function $f$ one is testing. 

\subsection{Hessian Metrics}

A substantial part of this work is devoted to Hessian metrics, when $g$ is of the form
\[
g = D^2 \Phi ~,
\]
where $\Phi$ is a strongly convex smooth function on $M \subset \mathbb{R}^d$. Recall that a twice differentiable function is called strongly convex if its hessian is strictly (but not necessarily uniformly) positive-definite on its domain. 
Let $\mu = \exp(-V) dx$ denote a probability measure on $M$ having smooth density, and denote by $\nu$  the push-forward of $\mu$ by $\nabla \Phi$. Recall that a map $T$ is said to push-forward $\mu$ onto $\nu$ if
\begin{equation} \label{eq:push-forward}
\nu = T_*(\mu) = \mu \circ T^{-1} .
\end{equation}
Note that $\nabla \Phi$ is a smooth diffeomorphism thanks to the strong convexity of $\Phi$. The resulting change-of-variables formula
\begin{equation} \label{eq:change-of-variables}
\det  D^2 \Phi = \frac{\exp(-V)}{\exp(-W(\nabla \Phi))} 
\end{equation}
is known as the Monge-Amp\`ere equation, confirming in particular
that $\nu = \exp(-W) dx$ has a smooth density on $\nabla \Phi(\intr(M))$. It will be convenient to rewrite (\ref{eq:change-of-variables}) as
\begin{equation} \label{eq:log-change}
W(\nabla \Phi) = V + \log \det  D^2 \Phi  ~.
\end{equation}
Defining $P$ by $\exp(-V) dx = \exp(-P) \vol_g$, since
\[
 \vol_g = (\det D^2 \Phi)^{1/2} dx,
\]
it follows that
\[
P = V + \frac{1}{2} \log \det D^2 \Phi = \frac{1}{2} (V + W(\nabla \Phi)) ~.
\]
Recall that $\Ric_{g,\mu} = \Ric_{g} + D^2_g P$. The following calculation was verified by the first named author in \cite{Kolesnikov-HessianMetricsArisingFromOT}:

\begin{thm} \label{thm:Kol-compute}
The generalized Ricci-curvature tensor $\Ric_{g,\mu}$ of the weighted Riemannian manifold $(M,g = D^2 \Phi , \mu)$ is given by
\begin{equation} \label{be}
\Ric_{g,\mu} = \frac{1}{4} H + \frac{1}{2} \Bigl(  D^2 V+  D^2 \Phi \cdot  D^2 W (\nabla \Phi) \cdot   D^2 \Phi \Bigr) ~,
\end{equation}
where  $H$ is the non-negative symmetric matrix with entries
\[
H_{ij} = \mbox{\rm{Tr}} \Bigl[ (D^2 \Phi)^{-1} D^2 \Phi_{x_i}  (D^2 \Phi)^{-1} D^2 \Phi_{x_j} \Bigr] ~.
\]
\end{thm}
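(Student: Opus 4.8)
The plan is to compute $\Ric_{g,\mu} = \Ric_g + D^2_g P$ for $g = D^2\Phi$ directly in Euclidean coordinates, using the formulas (\ref{hessformula}) and (\ref{christoff}) for the Christoffel symbols and Riemannian Hessian, and the known expression for the Ricci tensor of a Hessian (Hessian-type / ``Amari'') metric. First I would record the Christoffel symbols of a Hessian metric: since $g_{ij} = \Phi_{x_i x_j}$, we get $\partial_{x_k} g_{ij} = \Phi_{x_i x_j x_k}$ which is totally symmetric in $i,j,k$, so (\ref{christoff}) collapses to $\Gamma^m_{ij} = \frac12 g^{mk}\Phi_{x_i x_j x_k}$. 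This symmetry is the structural feature that makes the whole computation tractable, and I would exploit it repeatedly. From here one obtains the classical formula for the Ricci curvature of a Hessian metric; carrying the derivatives through yields, after the dust settles, $\Ric_g = \frac14 H - \frac12 D^2_g(\log\det D^2\Phi)$, where $H_{ij} = \mathrm{Tr}[(D^2\Phi)^{-1}(D^2\Phi)_{x_i}(D^2\Phi)^{-1}(D^2\Phi)_{x_j}]$. The key algebraic identity feeding this is $\partial_{x_i}\log\det D^2\Phi = \mathrm{Tr}[(D^2\Phi)^{-1}(D^2\Phi)_{x_i}]$, differentiated once more and combined with $\partial_{x_i}(D^2\Phi)^{-1} = -(D^2\Phi)^{-1}(D^2\Phi)_{x_i}(D^2\Phi)^{-1}$.

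Next I would handle the weight term $D^2_g P$ with $P = V + \frac12\log\det D^2\Phi$. By linearity, $D^2_g P = D^2_g V + \frac12 D^2_g(\log\det D^2\Phi)$. Adding to the expression for $\Ric_g$, the two copies of $D^2_g(\log\det D^2\Phi)$ that carry opposite signs do not quite cancel — one has coefficient $-\frac12$ from $\Ric_g$ and $+\frac12$ from $D^2_g P$ — so in fact they cancel exactly, leaving $\Ric_{g,\mu} = \frac14 H + D^2_g V$. To reach the stated form (\ref{be}), I would then convert $D^2_g V$ back into Euclidean quantities using the Monge--Amp\`ere relation (\ref{eq:log-change}), $W(\nabla\Phi) = V + \log\det D^2\Phi$. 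Applying the Riemannian Hessian to both sides: the left side is $D^2_g(W\circ\nabla\Phi)$, and the chain rule for the Hessian of a composition with the gradient map of the potential defining the metric is precisely where the Hessian-metric structure pays off — one gets $D^2_g(W\circ\nabla\Phi) = D^2\Phi\cdot (D^2 W)(\nabla\Phi)\cdot D^2\Phi$ (the ``pullback'' of $D^2 W$), because the second-order correction terms involve exactly the Christoffel symbols $\Gamma^m_{ij}=\frac12 g^{mk}\Phi_{x_ix_jx_k}$ and are absorbed. Hence $D^2_g V = D^2\Phi\cdot(D^2W)(\nabla\Phi)\cdot D^2\Phi - D^2_g(\log\det D^2\Phi)$, and substituting this would reintroduce a $\log\det$ term; reconciling the bookkeeping so that everything matches (\ref{be}) with the clean coefficients $\frac14$ and $\frac12$ is the step requiring care. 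In fact the correct organization is to split $D^2_g V = \frac12 D^2_g V + \frac12 D^2_g V$ and use the Monge--Amp\`ere identity on one half (writing $D^2_g V = D^2_g(W\circ\nabla\Phi) - D^2_g\log\det D^2\Phi$) while pairing the leftover $\log\det$ against the one already present, which reproduces exactly $\frac14 H + \frac12(D^2 V + D^2\Phi\cdot D^2 W(\nabla\Phi)\cdot D^2\Phi)$; I would present the identity $D^2_g(\log\det D^2\Phi) = \frac12 H - \text{(a Euclidean-Hessian correction)}$ cleanly up front to avoid circular-looking manipulations.

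Finally, the non-negativity of $H$ is immediate: writing $A_i := (D^2\Phi)^{-1/2}(D^2\Phi)_{x_i}(D^2\Phi)^{-1/2}$, which is symmetric, we have $H_{ij} = \mathrm{Tr}(A_i A_j) = \scalar{A_i, A_j}_{HS}$, a Gram matrix of symmetric matrices under the Hilbert--Schmidt inner product, hence positive semi-definite; equivalently, for any vector $\xi$, $\xi^T H \xi = \mathrm{Tr}\big((\sum_i \xi_i A_i)^2\big) = \|\sum_i \xi_i A_i\|_{HS}^2 \ge 0$. I expect the main obstacle to be purely computational bookkeeping: correctly tracking the placement of indices and the $\frac12$'s through the Christoffel symbols when expanding both $\Ric_g$ and the various Riemannian Hessians, and in particular verifying the composition formula $D^2_g(W\circ\nabla\Phi) = D^2\Phi\cdot(D^2W)(\nabla\Phi)\cdot D^2\Phi$, which is the linchpin of the argument — it is the statement that $\nabla\Phi$ is an affine map in the $g$-affine structure, so it has no $g$-Hessian, and pulls back the Euclidean Hessian of $W$ by its Euclidean differential $D^2\Phi$. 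Since the theorem attributes the computation to \cite{Kolesnikov-HessianMetricsArisingFromOT}, I would follow that reference's organization and simply verify the final identity rather than re-derive every intermediate Ricci-curvature formula from scratch.
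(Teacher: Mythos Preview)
The paper does not itself prove this theorem; it simply attributes the computation to \cite{Kolesnikov-HessianMetricsArisingFromOT}. Your overall strategy---compute the Christoffel symbols $\Gamma^m_{ij}=\tfrac12 g^{mk}\Phi_{ijk}$, express $\Ric_g$, add $D^2_g P$ with $P=V+\tfrac12\log\det D^2\Phi$, and use the twice-differentiated Monge--Amp\`ere relation---is exactly the right one, and is essentially the route taken both in the cited reference and implicitly in the paper's own derivation of the equivalent form (\ref{BEVV}).

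However, two of your intermediate identities are stated incorrectly, and if you rely on them the bookkeeping will not close. First, the Ricci tensor of a Hessian metric is not $\tfrac14 H - \tfrac12 D^2_g(\log\det D^2\Phi)$. A direct computation (using $\Gamma^k_{ik}=\tfrac12 L_i$ with $L=\log\det D^2\Phi$, and $L_{ij}=-H_{ij}+g^{ab}\Phi_{abij}$) gives instead
\[
(\Ric_g)_{ij}=\tfrac14 H_{ij}-\tfrac12\,\Gamma^k_{ij}L_k = \tfrac14 H_{ij}-\tfrac14\, g^{kl}\Phi_{ijl}\,L_k ,
\]
which is $\tfrac14 H - \tfrac12(D^2 L - D^2_g L)$, not $\tfrac14 H - \tfrac12 D^2_g L$. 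Consequently your claimed cancellation yielding ``$\Ric_{g,\mu}=\tfrac14 H + D^2_g V$'' is false; extra $\log\det$ terms survive. Second, the ``linchpin'' identity $D^2_g(W\circ\nabla\Phi)=D^2\Phi\cdot D^2W(\nabla\Phi)\cdot D^2\Phi$ is also false: expanding via (\ref{hessformula}) gives
\[
\bigl(D^2_g(W\circ\nabla\Phi)\bigr)_{ij}=\bigl[D^2\Phi\cdot D^2W(\nabla\Phi)\cdot D^2\Phi\bigr]_{ij}+\tfrac12\,\Phi_{ijk}\,W_k(\nabla\Phi),
\]
because the Christoffel correction absorbs only \emph{half} of the term $\Phi_{ijk}W_k(\nabla\Phi)$ coming from the Euclidean chain rule. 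The conceptual slip is that $\nabla\Phi$ is affine for the \emph{dual} flat connection of the Hessian structure, not for the Levi--Civita connection of $g$ (which is the average of the two flat connections). Once you correct these two formulas, the extra first-order terms in $\Phi_{ijk}$ match up via the differentiated Monge--Amp\`ere relation $W_k(\nabla\Phi)=g^{kl}(V_l+L_l)$, and (\ref{be}) follows cleanly. Your argument for the non-negativity of $H$ is correct as written.
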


As apparent above, Hessian metrics have the advantage that (\ref{be}) only involves derivatives of $\Phi$ up to third order, and that if we neglect the non-negative $H$ term, only two orders are in fact relevant. We already see from (\ref{be}) that when $V$ and $W$ are both convex, then $\Ric_{g,\mu}$ is non-negative. Instead of neglecting the $H$ term completely, we may also estimate it from below using only two derivatives as follows:
\begin{lem} \label{lem:H}
As positive-definite matrices,
\begin{align*}
H & \geq \frac{1}{d} \nabla \log \det D^2 \Phi \otimes \nabla \log \det D^2 \Phi  \\
&= \frac{1}{d} \bigl( \nabla V - D^2 \Phi \cdot \nabla W(\nabla \Phi) \bigr) \otimes \bigl( \nabla V - D^2 \Phi \cdot \nabla W(\nabla \Phi)\bigr).
\end{align*}
\end{lem}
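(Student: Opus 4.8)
The statement to prove is the matrix inequality
\[
H \geq \frac{1}{d} \nabla \log \det D^2 \Phi \otimes \nabla \log \det D^2 \Phi,
\]
together with the identity
\[
\nabla \log \det D^2 \Phi = \nabla V - D^2 \Phi \cdot \nabla W(\nabla \Phi),
\]
the latter being an immediate consequence of differentiating (\ref{eq:log-change}): applying $\nabla$ to $W(\nabla \Phi) = V + \log \det D^2 \Phi$ gives $D^2 \Phi \cdot \nabla W(\nabla \Phi) = \nabla V + \nabla \log \det D^2 \Phi$, from which the identity follows by rearranging. So the real content is the first (matrix) inequality, and I would prove it pointwise at a fixed $x \in M$.

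\textbf{Main step.} The key computation is Jacobi's formula for the derivative of the log-determinant: for each coordinate direction $x_k$,
\[
\partial_{x_k} \log \det D^2 \Phi = \mbox{\rm{Tr}}\bigl[ (D^2 \Phi)^{-1} D^2 \Phi_{x_k} \bigr].
\]
Thus the vector $v := \nabla \log \det D^2 \Phi$ has entries $v_k = \mbox{\rm{Tr}}[(D^2\Phi)^{-1} D^2 \Phi_{x_k}]$, while $H_{ij} = \mbox{\rm{Tr}}[(D^2\Phi)^{-1} D^2\Phi_{x_i}(D^2\Phi)^{-1}D^2\Phi_{x_j}]$. To compare $H$ with $\frac{1}{d} v \otimes v$, I would introduce the symmetric matrices $A_k := (D^2\Phi)^{-1/2} D^2\Phi_{x_k} (D^2\Phi)^{-1/2}$ (using the symmetric positive-definite square root of $D^2\Phi$), so that $v_k = \mbox{\rm{Tr}}(A_k)$ and $H_{ij} = \mbox{\rm{Tr}}(A_i A_j)$. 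Fix an arbitrary vector $\xi = (\xi_1,\dots,\xi_d) \in \Real^d$ and set $B := \sum_k \xi_k A_k$, which is again symmetric. Then $\scalar{H\xi,\xi} = \sum_{i,j}\xi_i\xi_j \mbox{\rm{Tr}}(A_iA_j) = \mbox{\rm{Tr}}(B^2) = \norm{B}_{HS}^2$, whereas $\scalar{(v\otimes v)\xi,\xi} = (\sum_k \xi_k v_k)^2 = (\mbox{\rm{Tr}} B)^2$. The desired inequality $\scalar{H\xi,\xi} \geq \frac{1}{d}\scalar{(v\otimes v)\xi,\xi}$ is therefore exactly the Cauchy--Schwarz estimate $(\mbox{\rm{Tr}} B)^2 \leq d \cdot \mbox{\rm{Tr}}(B^2)$, valid for any $d\times d$ matrix $B$ (it is $\scalar{\mbox{\rm{Id}},B}_{HS}^2 \leq \norm{\mbox{\rm{Id}}}_{HS}^2 \norm{B}_{HS}^2$). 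This also simultaneously re-proves that $H \geq 0$ (take the Cauchy--Schwarz lower bound $\mbox{\rm{Tr}}(B^2)\geq 0$).

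\textbf{Anticipated obstacles.} There is essentially no serious obstacle here: the proof is a reduction to scalar Cauchy--Schwarz after a well-chosen change of basis (conjugation by $(D^2\Phi)^{-1/2}$). The only points requiring a little care are (i) justifying that one may pass freely between the expressions involving $(D^2\Phi)^{-1}D^2\Phi_{x_k}$ and the symmetrized $A_k$ — this uses only cyclicity of the trace and that $(D^2\Phi)^{1/2}$ is well-defined and symmetric by strong convexity of $\Phi$; and (ii) checking the algebraic identity $\mbox{\rm{Tr}}(A_iA_j) = \mbox{\rm{Tr}}[(D^2\Phi)^{-1}D^2\Phi_{x_i}(D^2\Phi)^{-1}D^2\Phi_{x_j}]$, which is again immediate from cyclicity. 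I would present the argument in the coordinate-free form above rather than manipulating indices directly, since the matrix $B$ depending on the test vector $\xi$ makes the Cauchy--Schwarz structure transparent.
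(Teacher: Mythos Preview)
Your proof is correct and essentially identical to the paper's: the paper introduces $B_\theta = (D^2\Phi)^{-1/2} D^2\Phi_{x_\theta} (D^2\Phi)^{-1/2}$ with $x_\theta = \sum_i \theta_i x_i$ (which is exactly your $B = \sum_k \xi_k A_k$) and applies the same Cauchy--Schwarz inequality $\mbox{\rm{Tr}}(B_\theta^2) \geq \frac{1}{d}\mbox{\rm{Tr}}(B_\theta)^2$, then differentiates (\ref{eq:log-change}) for the second identity. The only cosmetic difference is that you first define the individual $A_k$ and then sum, whereas the paper uses the directional derivative notation directly.
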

\begin{proof}
Given $\theta \in \Real^d$, introduce the following symmetric matrix:
\[
B_\theta = (D^2 \Phi)^{-1/2} D^2 \Phi_{x_\theta} (D^2 \Phi)^{-1/2},
\]
where $x_\theta = \sum_{i=1}^d \theta_i x_i$. Consequently, by the Cauchy--Schwarz inequality applied to the eigenvalues of $B_\theta$, we have
\[
\scalar{H \theta, \theta} =   \mbox{\rm{Tr}}(B_\theta^2) \geq \frac{1}{d} \mbox{\rm{Tr}}(B_\theta)^2 = \frac{1}{d} \brac{ \mbox{\rm{Tr}} \Bigl[ (D^2 \Phi)^{-1} D^2 \Phi_{x_\theta} \Bigr] } ^2 = \frac{1}{d} \scalar{\nabla \log \det D^2 \Phi, \theta}^2 ,
\]
and hence
\[
 H \ge \frac{1}{d} \nabla \log \det D^2 \Phi \otimes \nabla \log \det D^2 \Phi . 
\]
The second part of the assertion follows by differentiating (\ref{eq:log-change}). 
\end{proof}

\subsection{Optimal Transport}

In the previous subsection, we have described a situation where given $\mu = \exp(-V) dx$ and a strongly convex $\Phi$, the measure $\nu = \exp(-W) dx$ is constructed as the push-forward of $\mu$ via $\nabla \Phi$. From this perspective, $\nu$ and $W$ are auxiliary objects, useful mainly for computational reasons. Remarkably, it was shown by Y. Brenier \cite{BrenierMap} (see also McCann \cite{McCannConvexityPrincipleForGases} for refinements), that it is possible to reverse the above roles: starting with two absolutely-continuous probability measures $\mu = \exp(-V) dx$ and $\nu = \exp(-W) dx$ (supported on $\supp(\mu), \supp(\nu) \subset \Real^d$, respectively), there exists  a \emph{strictly} convex $\Phi$ so that $T: x \mapsto \nabla \Phi(x)$ pushes forward $\mu$ onto $\nu$. In fact, such a $T$ is unique $\mu$-almost-everywhere, and is characterized as being the $L^2$-optimal-transport map between $\mu$ and $\nu$, minimizing the transport-cost $\int \abs{T(x) - x}^2 d\mu(x)$ among all maps satisfying (\ref{eq:push-forward}) - see \cite{BogachevKolesnikov-OTSurvey, VillaniTopicsInOptimalTransport} for a detailed discussion. 

We will rely on the known regularity theory for the Monge-Amp\`ere equation (\ref{eq:change-of-variables}) associated to the above transport problem. It was shown by Caffarelli \cite{CaffarelliStrictlyConvexIsHolder,CaffarelliRegularity,CaffarelliHigherHolderRegularity} that whenever the support of $\nu$ is convex, the usual interior elliptic regularity estimates hold: if $V \in C^{k,\alpha}_{loc}(\intr(\supp(\mu)))$ and $
W \in C^{k,\alpha}_{loc}(\intr(\supp(\nu)))$, then $\Phi \in C^{k+2,\alpha}_{loc}(\intr(\supp(\mu)))$, for all $k \geq 0$. In particular, the change-of-variables formula (\ref{eq:change-of-variables}) applies in the interior of $M$, and $\Phi$ is in fact strongly convex there.

\section{Refined Brascamp--Lieb inequalities} \label{sec:refined-BL}

At least on a formal level, it is clear that Corollary \ref{BLRiemann}, together with the computations of Theorem \ref{thm:Kol-compute} and Lemma \ref{lem:H}, together imply the following:
\begin{thm}[Refined Brascamp--Lieb on $\Real^d$] \label{thm:refined-BL}
Let $\mu = \exp(-V) dx$ denote a probability measure having positive smooth density on $\Real^d$ so that $D^2 V > 0$. 
Then for any $f \in C^1(\Real^d)$,
\[
\Var_{\mu}(f) \leq 2 \; \inf_W \set{ \int_{\Real^d} \scalar{(D^2 V + Q_W + Q_H)^{-1} \nabla f,\nabla f} d\mu } ~,
\]
where:
\begin{align*} 
\nonumber
Q_W & :=  D^2 \Phi \cdot D^2 W(\nabla \Phi) \cdot D^2 \Phi ,\\
 Q_H & := \frac{1}{2d} \bigl( \nabla V - D^2 \Phi \cdot \nabla W(\nabla \Phi) \bigr) \otimes \bigl( \nabla V - D^2 \Phi \cdot \nabla W(\nabla \Phi)\bigr),
\end{align*}
the infimum is taken over all smooth convex functions $W$ on a convex subset $\Omega \subset \Real^d$, so that $\nu = \exp(-W) dx$ is a probability measure on $\Omega$, and $\nabla \Phi$ denotes the (smooth) optimal-transport map pushing forward $\mu$ onto $\nu$.
\end{thm}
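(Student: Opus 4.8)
The plan is to deduce Theorem \ref{thm:refined-BL} from Corollary \ref{BLRiemann} by selecting, for each admissible auxiliary measure $\nu = \exp(-W) dx$, the Hessian metric $g = D^2 \Phi$, where $\nabla\Phi$ is the optimal-transport map pushing $\mu$ forward onto $\nu$. First I would invoke Brenier's theorem and the Caffarelli regularity theory recalled in the Optimal Transport subsection: since $\nu$ is supported on a convex set $\Omega$ and both $V$ and $W$ are smooth, the potential $\Phi$ is smooth and strongly convex on $\intr(\supp(\mu)) = \Real^d$, so $g = D^2\Phi$ is a genuine smooth Riemannian metric. Next I would verify that the Geometric Convexity Assumptions hold: here $M = \Real^d$ has no boundary, and one needs $(\Real^d, D^2\Phi)$ to be either complete and oriented or geodesically convex. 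Orientability is automatic; I would argue geodesic convexity of the Hessian manifold $(\Real^d, D^2\Phi)$ — this is the standard fact that a Hessian manifold built from a convex potential on all of $\Real^d$ is geodesically convex (geodesics correspond, via the Legendre transform, to straight lines in the dual coordinates $y = \nabla\Phi$), so that Theorem \ref{thm:BL} applies with $g = D^2\Phi$.

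Once the hypotheses of Theorem \ref{thm:BL} are in place, I would apply Theorem \ref{thm:Kol-compute} to write
\[
\Ric_{g,\mu} = \frac{1}{4} H + \frac{1}{2}\Bigl( D^2 V + D^2\Phi \cdot D^2 W(\nabla\Phi) \cdot D^2\Phi \Bigr),
\]
and then Lemma \ref{lem:H} to bound $H$ below by $\frac{1}{d}\bigl(\nabla V - D^2\Phi\cdot\nabla W(\nabla\Phi)\bigr)^{\otimes 2}$. Combining these gives, as positive-definite matrices,
\[
\Ric_{g,\mu} \geq \frac{1}{2}\bigl( D^2 V + Q_W + Q_H \bigr),
\]
with $Q_W, Q_H$ as defined in the statement. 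Since $D^2 V > 0$ and $Q_W, Q_H \succeq 0$, the right-hand side is strictly positive, so $\Ric_{g,\mu} > 0$ as required by Theorem \ref{thm:BL}; moreover, inverting the matrix inequality (which reverses the order for positive-definite matrices) yields $\Ric_{g,\mu}^{-1} \leq 2\,(D^2 V + Q_W + Q_H)^{-1}$ in the quadratic-form sense.

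I would then feed this into Theorem \ref{thm:BL}: for every $f \in C^1(\Real^d)$,
\[
\Var_\mu(f) \leq \int_{\Real^d} \scalar{\Ric_{g,\mu}^{-1}\nabla f, \nabla f}\, d\mu \leq 2\int_{\Real^d} \scalar{(D^2 V + Q_W + Q_H)^{-1}\nabla f,\nabla f}\, d\mu.
\]
Here one must be slightly careful that the inner product $\scalar{\Ric_{g,\mu}^{-1}\nabla f,\nabla f}$ in Theorem \ref{thm:BL} is the one appearing in the notational conventions of Section \ref{sec:pre}, namely $\Ric_{g,\mu}^{-1}$ acting on the Euclidean covector $\nabla f$ — which is exactly the metric-independent expression $\Ric_{g,\mu}^{-1}(\nabla f, \nabla f)$, so no spurious metric factors enter. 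Finally, taking the infimum over all admissible $W$ gives the claimed bound.

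The main obstacle I anticipate is not the algebraic manipulation — that is routine given Theorems \ref{thm:Kol-compute}, \ref{thm:BL} and Lemma \ref{lem:H} — but the verification of the Geometric Convexity Assumptions for the Hessian metric $g = D^2\Phi$ on all of $\Real^d$, i.e.\ establishing completeness or geodesic convexity of $(\Real^d, D^2\Phi)$. The authors flag this themselves by saying the result holds "at least on a formal level"; a fully rigorous argument likely requires either an approximation scheme (exhausting $\Real^d$ by convex bodies, proving the inequality on each with a suitably regularized metric and boundary term, then passing to the limit) or a direct verification of geodesic completeness of the Hessian metric, both of which are somewhat delicate. I would therefore treat this completeness/convexity check as the technical heart of the proof, handling the rest as a direct substitution.
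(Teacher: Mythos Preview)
Your formal derivation---combining Theorem \ref{thm:Kol-compute} with Lemma \ref{lem:H} and feeding the resulting lower bound on $\Ric_{g,\mu}$ into Theorem \ref{thm:BL}---matches the paper exactly, and you correctly identify the only genuine difficulty as the verification of the Geometric Convexity Assumptions for $(\Real^d, D^2\Phi)$.

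However, your proposed direct resolution contains an error. The claim that ``geodesics correspond, via the Legendre transform, to straight lines in the dual coordinates $y=\nabla\Phi$'' is false: the coordinate change $y=\nabla\Phi$ transforms the Hessian metric $D^2\Phi$ into the dual Hessian metric $D^2\Phi^*$, not into the Euclidean metric, so Levi--Civita geodesics of a Hessian metric are not straight lines in either coordinate system (Hessian manifolds are generically curved---witness the nontrivial $\Ric_g$ implicit in Theorem \ref{thm:Kol-compute}). You may be thinking of the dually-flat structure from information geometry, where two \emph{non-metric} flat connections have affine geodesics in primal and dual coordinates; but neither of those is the Levi--Civita connection, which is what Theorem \ref{thm:BL} requires. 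So the direct geodesic-convexity argument does not go through.

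The paper circumvents this via an approximation, but not by exhausting $\Real^d$ with convex bodies as you suggest. Instead it approximates the potentials: one chooses smooth $V_n, W_n$ converging to $V, W$ in $C^2_{loc}$ and satisfying uniform pinching $c\,\mathrm{Id} < D^2 V_n, D^2 W_n < C\,\mathrm{Id}$. A cited result of the first author then guarantees $\delta\,\mathrm{Id} < D^2\Phi_n < \Delta\,\mathrm{Id}$ for the corresponding transport potentials, so each $(\Real^d, D^2\Phi_n)$ is bi-Lipschitz to Euclidean space and hence complete; Theorem \ref{thm:BL} applies for each $n$. Passing to the limit is then done by dominated convergence, the delicate step being to show that $\nabla\Phi_n(B_R)$ remains in a fixed compact subset of $\intr(\Omega)$, so that $\nabla W_n(\nabla\Phi_n)$ and $D^2 W_n(\nabla\Phi_n)$ converge.
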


We stress that further work is required to justify the latter theorem, since we do not know whether the associated weighted manifold $(\Real^d,g = D^2 \Phi,\mu)$ will in general satisfy the Geometric Convexity Assumptions required to apply Theorem \ref{thm:BL}. This delicate point is circumvented in our proof of Theorem \ref{thm:appendix-refinedBL}, whose formulation slightly generalizes that of Theorem \ref{thm:refined-BL} above, and is deferred to the Appendix. 

\medskip

Note that by using $W = V$ (and hence $\nabla \Phi = \mbox{\rm{Id}}$) above, we recover the classical Brascamp-Lieb inequality (\ref{eq:classical-BL}) on $\Real^d$, so Theorem \ref{thm:refined-BL} should be understood as a refinement. Moreover, since
\[
 D^2 V + Q_W + Q_H \geq D^2 V + Q_W \geq D^2 V 
\]
whenever $D^2 W \geq 0$, we see that \emph{any} convex $W$ yields an estimate which is, up to a factor of $2$, at least as good as the Brascamp-Lieb one. We will see in Corollary \ref{cor:finite-dim-BL} below how to also exploit the additional $Q_H$ term appearing above.

\medskip

Let us now demonstrate the usefulness of the Refined Brascamp--Lieb Theorem \ref{thm:refined-BL} in two concrete examples. 

\subsection{$(-d)$-dimensional Brascamp--Lieb inequality} 

First, we set $W$ to be constant on any convex compact set $\Omega$ with non-empty interior (say, the Euclidean ball). 

\begin{cor} \label{cor:finite-dim-BL}
Let $\mu = \exp(-V) dx$ denote a log-concave probability measure having positive smooth density on $\Real^d$. 
Then for any $f \in C^1(\Real^d)$,
\[
\Var_{\mu}(f) \le C_{d} \int \big\langle \big(D^2 V + \frac{1}{2d} \nabla V \otimes \nabla V \big)^{-1} \nabla f, \nabla f \big \rangle \ d \mu.
\]
with $C_d = 2$.  \end{cor}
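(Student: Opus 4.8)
The plan is to apply Theorem \ref{thm:refined-BL} with the specific choice of $W$ being a constant function on a convex compact body $\Omega$ of unit volume (e.g. a suitably scaled Euclidean ball), so that $\nu = \exp(-W) dx = \mathbf{1}_\Omega dx$ is the uniform probability measure on $\Omega$. First I would note that with this choice, $D^2 W \equiv 0$ and $\nabla W \equiv 0$ on $\intr(\Omega)$, so that the tensor $Q_W = D^2 \Phi \cdot D^2 W(\nabla \Phi) \cdot D^2 \Phi$ vanishes identically, and the tensor $Q_H = \frac{1}{2d} (\nabla V - D^2\Phi \cdot \nabla W(\nabla\Phi)) \otimes (\nabla V - D^2 \Phi \cdot \nabla W(\nabla\Phi))$ collapses to $\frac{1}{2d} \nabla V \otimes \nabla V$, since the $\nabla W(\nabla \Phi)$ term drops out. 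Substituting these into the conclusion of Theorem \ref{thm:refined-BL} immediately yields
\[
\Var_\mu(f) \le 2 \int \scalar{\big(D^2 V + \tfrac{1}{2d}\nabla V \otimes \nabla V\big)^{-1} \nabla f, \nabla f} \, d\mu,
\]
which is exactly the claimed inequality with $C_d = 2$.

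Two points need care. The first is that $W$ must be a genuine (finite) convex function on $\Omega$ with $\exp(-W) dx$ a probability measure; taking $W \equiv 0$ and $\mathrm{Vol}(\Omega) = 1$ achieves this, and $W$ is trivially smooth and convex on the convex set $\Omega$, so the hypotheses of Theorem \ref{thm:refined-BL} are met. The second, and this is the main obstacle, is the regularity and boundary-behaviour of the optimal transport map $\nabla\Phi$ pushing $\mu$ forward onto $\nu$: since $\nu$ is supported on a compact set $\Omega$ with boundary, the potential $W$ is not smooth up to the boundary (it jumps to $+\infty$), and the associated weighted manifold $(\Real^d, g = D^2\Phi, \mu)$ need not satisfy the Geometric Convexity Assumptions required to invoke Theorem \ref{thm:BL} directly. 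This is precisely the delicate point flagged after Theorem \ref{thm:refined-BL}: the honest justification goes through Theorem \ref{thm:appendix-refinedBL} in the Appendix rather than a naive application of Theorem \ref{thm:BL}. For the purposes of this corollary I would simply cite Theorem \ref{thm:refined-BL} (equivalently its appendix version) as a black box, since $\nu$ uniform on a convex body is an admissible choice there.

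Finally, I would remark on the interpretation: writing $N := -d$, the matrix $D^2 V + \frac{1}{2d}\nabla V \otimes \nabla V = D^2 V - \frac{1}{2N}\nabla V \otimes \nabla V$ is the natural $N$-dimensional (here $N < 0$) Bakry--\'Emery-type curvature expression, so Corollary \ref{cor:finite-dim-BL} is indeed a $(-d)$-dimensional Brascamp--Lieb inequality; the constant $C_d = 2$ is worse by a factor slightly below $2$ than the sharp constant $\frac{d}{d+1}$ (equivalently $\frac{-N}{-N+1}$ with a shift) known from \cite{BobkovLedouxWeightedPoincareForHeavyTails, NguyenDimensionalBrascampLieb, KolesnikovEMilman-Reilly}, the loss being exactly the factor of $2$ inherent in passing from $W = V$ to a general auxiliary measure $\nu$ in Theorem \ref{thm:refined-BL}. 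No further computation is required beyond the substitution above.
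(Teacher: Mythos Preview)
Your approach---choosing $W$ constant on a compact convex body so that $Q_W = 0$ and $Q_H = \tfrac{1}{2d}\nabla V \otimes \nabla V$---is exactly the paper's, and the substitution you describe is correct.

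There is one genuine technical gap. Theorem~\ref{thm:refined-BL} assumes $D^2 V > 0$ strictly, and its appendix version (Theorem~\ref{thm:appendix-refinedBL}) requires that either $D^2 V > 0$ or $D^2 W > 0$; with $W$ constant the latter fails, so you still need $D^2 V > 0$. But the corollary is stated for an arbitrary log-concave $\mu$, where $D^2 V \geq 0$ only. Citing Theorem~\ref{thm:refined-BL} ``as a black box'' therefore does not cover the case at hand. The paper closes this gap by a standard approximation: restrict to $f$ supported in a ball $B_R$, replace $V$ by strongly convex $V_n \to V$ in $C^2_{loc}$, apply the theorem to each $V_n$, and pass to the limit using uniform $C^2$ convergence on $B_R$. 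You should add this step.

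A minor side remark: your quoted sharp constant is off. The sharp Euclidean constant from Nguyen is $C_d = \tfrac{2d}{2d-1}$, and the general weighted-manifold constant from \cite{KolesnikovEMilman-Reilly} is $\tfrac{d+1}{d}$; neither equals $\tfrac{d}{d+1}$.
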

\begin{proof}
Assume without loss of generality that $f$ is supported on a closed Euclidean ball $B_R$. Applying Theorem \ref{thm:refined-BL} with $W$ as above to strongly convex functions $V_n$ approximating $V$ in $C^2_{loc}(\Real^d)$, the result follows by passing to the limit and using that $V_n \to V$ in $C^2(B_R)$. 
\end{proof}

As explained e.g. in \cite{KolesnikovEMilman-Reilly}, the tensor
\[
D^2 V + \frac{1}{2d} \nabla V \otimes \nabla V 
\]
may be interpreted as a $(-d)$-dimensional generalized Ricci tensor (see Section \ref{sec:conformal} for the general definition), so the above should be thought of as a $(-d)$-dimensional Brascamp--Lieb inequality. In the Euclidean setting, such an inequality was first proved by Bobkov and Ledoux \cite{BobkovLedouxWeightedPoincareForHeavyTails}, and subsequently generalized and sharpened by V.-H. Nguyen \cite{NguyenDimensionalBrascampLieb}, who obtained a better constant of $C_d = \frac{2d}{2d-1} \leq 2$, which is best possible in the Euclidean setting. On general (compact) weighted manifolds, a suitable generalization was obtained in \cite{KolesnikovEMilman-Reilly} with $C_d = \frac{d+1}{d}$, and it was shown that the latter constant, while inferior to Nguyen's Euclidean constant, is in general best possible on weighted manifolds.  

\subsection{Brascamp--Lieb inequality for compactly supported measures}

\begin{thm}[Brascamp--Lieb with compact support]
Let $\nu = \exp(-W) dx$ denote a log-concave probability measure with barycenter at the origin, having smooth positive density on its convex support $\Omega \subset \{x:|x| \le R\}$. 
Then for any $h \in C^1(\Omega)$,
\begin{equation} \label{eq:BL-compact}
\Var_{\nu}(h) \le 2 \int \big\langle \big( \frac{1}{2 R^2} \cdot \mbox{{\rm Id}}+ D^2 W \big)^{-1} \nabla h, \nabla h   \big \rangle \ d \nu ,
\end{equation}
where $C>0$ is a universal constant. 
\end{thm}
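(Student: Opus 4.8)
The plan is to use the Refined Brascamp–Lieb Theorem \ref{thm:refined-BL}, but with the roles of $\mu$ and $\nu$ played in the "reversed" sense afforded by optimal transport. That is, I would take $\nu = \exp(-W)dx$ as the given measure, and introduce as the auxiliary measure $\mu$ the uniform probability measure on some fixed convex body — the most natural choice being the Euclidean ball $B_r$ of the appropriate radius $r$, or a Gaussian-type reference, but a ball is cleanest. One then lets $\nabla\Phi$ be the optimal transport map pushing $\nu$ onto $\mu$ (note the direction: source $\nu$, target $\mu$); since the target $\mu$ is uniform on a convex body, Caffarelli's regularity theory applies and $\Phi$ is smooth and strongly convex on $\intr(\Omega)$. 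Applying Theorem \ref{thm:refined-BL} (or rather its appendix-justified generalization, Theorem \ref{thm:appendix-refinedBL}) with $(\mu,\nu,V,W)$ there replaced by $(\nu, \lambda_{B_r}, W, \text{const})$ here, the potential of the target is constant, so $D^2 W(\nabla\Phi) \equiv 0$ in their notation, and the $Q_W$ term vanishes. This yields
\[
\Var_\nu(h) \le 2 \int_\Omega \scalar{ \brac{D^2 W + \tfrac{1}{2d} \nabla W \otimes \nabla W}^{-1} \nabla h, \nabla h} d\nu,
\]
i.e. the $(-d)$-dimensional refined inequality of Corollary \ref{cor:finite-dim-BL} applied to $\nu$ in place of $\mu$.

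From here the task reduces to a purely linear-algebraic comparison: I must show that
\[
D^2 W + \tfrac{1}{2d} \nabla W \otimes \nabla W \ \ge\ \tfrac{1}{2R^2}\,\mathrm{Id} + D^2 W
\]
as positive-definite matrices, which after cancelling $D^2 W$ amounts to $\tfrac{1}{2d}\nabla W \otimes \nabla W \ge \tfrac{1}{2R^2}\mathrm{Id}$. This is of course false pointwise (a rank-one matrix cannot dominate a multiple of the identity), so this naive route does not work — and this is the point where the argument must be done more carefully. Instead, I would not try to bound the matrices pointwise, but rather exploit the fact that, because $\nu$ is log-concave with barycenter at the origin and supported in $\{|x|\le R\}$, one has good control on $\nabla W$: by convexity of $W$, for any $x$ in the interior, $\scalar{\nabla W(x), x - b} \ge 0$ where $b$ is the barycenter, and more usefully $\int \nabla W \, d\nu = 0$ together with integration-by-parts identities $\int \scalar{\nabla W(x), x} d\nu(x) = d$. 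The cleanest implementation is to re-run the transport argument with the target $\mu$ chosen so that the induced reference Hessian term is genuinely $\tfrac{1}{2R^2}\mathrm{Id}$: take $\mu$ to be a centered Gaussian, or — better — use the known fact (e.g. from \cite{KolesnikovEMilman-Reilly, NguyenDimensionalBrascampLieb}) that for a log-concave measure supported in $B_R$ the $(-d)$-dimensional generalized Ricci tensor $D^2 W + \tfrac1{2d}\nabla W\otimes\nabla W$ already dominates $\tfrac{c}{R^2}\mathrm{Id}$ in the relevant averaged sense, via the inequality $\scalar{(D^2W + \tfrac1{2d}\nabla W\otimes\nabla W)^{-1}\nabla h,\nabla h} \le \scalar{(\tfrac1{2R^2}\mathrm{Id} + D^2W)^{-1}\nabla h,\nabla h}$ which, while not following from matrix monotonicity, does follow from the Sherman–Morrison formula applied to the rank-one perturbation $\tfrac1{2d}\nabla W\otimes\nabla W$ once one knows $|\nabla W|^2 \ge d/R^2$ in a suitable region, or is handled by a direct $1$-homogeneity/barycenter estimate bounding $\int |x|^2 d\nu \le R^2$ against the lower bound coming from $\int \scalar{\nabla W, x}d\nu = d$.

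The main obstacle, then, is precisely this final comparison step: replacing the rank-one $(-d)$-dimensional term $\tfrac1{2d}\nabla W \otimes \nabla W$ by the full-rank $\tfrac1{2R^2}\mathrm{Id}$. I expect the correct argument to proceed not by pointwise matrix inequalities but by a Cauchy–Schwarz / Sherman–Morrison manipulation inside the integral: write $A = D^2 W$, $v = \nabla W$, and estimate $\scalar{(A + \tfrac1{2R^2}\mathrm{Id})^{-1}\nabla h, \nabla h}$ in terms of $\scalar{(A + \tfrac1{2d} v\otimes v)^{-1}\nabla h,\nabla h}$ using that $(A+\tfrac1{2d}v\otimes v)^{-1} = A^{-1} - \tfrac{A^{-1}v\otimes A^{-1}v}{2d + \scalar{A^{-1}v,v}}$, together with the support constraint $|x| \le R$ translated through the transport map, and the barycenter condition to kill cross terms upon integration against $d\nu$. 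I would also need to handle the usual approximation issues (replacing $W$ by strongly convex smooth $W_n$ on slightly shrunk domains, and $h$ by compactly supported functions) exactly as in the proof of Corollary \ref{cor:finite-dim-BL}, passing to the limit at the end. The stray "where $C > 0$ is a universal constant" in the statement suggests the authors in fact carry an extra universal factor through this comparison, which makes me more confident that the honest proof routes through an averaged/isoperimetric estimate rather than a clean pointwise bound.
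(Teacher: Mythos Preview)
Your approach has a genuine gap at the comparison step, and the fixes you propose do not close it. After obtaining the $(-d)$-dimensional inequality
\[
\Var_\nu(h) \le 2 \int \scalar{\brac{D^2 W + \tfrac{1}{2d}\nabla W \otimes \nabla W}^{-1} \nabla h, \nabla h} d\nu,
\]
you need the integrand to be bounded above by $\scalar{(\tfrac{1}{2R^2}\mathrm{Id} + D^2 W)^{-1}\nabla h, \nabla h}$. This requires the matrix inequality $\tfrac{1}{2d}\nabla W \otimes \nabla W \ge \tfrac{1}{2R^2}\mathrm{Id}$, which as you correctly note is impossible for a rank-one matrix. Sherman--Morrison does not help: it only tells you that $(A + \tfrac{1}{2d}v\otimes v)^{-1} \le A^{-1}$, not that it is $\le (A + \tfrac{1}{2R^2}\mathrm{Id})^{-1}$. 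Nor do the integration-by-parts identities like $\int \scalar{\nabla W,x}\,d\nu = d$ rescue the argument: the quadratic form $(D^2 W + \tfrac{1}{2d}\nabla W\otimes\nabla W)^{-1}$ can have an arbitrarily large eigenvalue in directions orthogonal to $\nabla W$ where $D^2 W$ is small, and no averaging in $x$ controls this uniformly over test functions $h$. (The stray ``universal constant'' in the statement is a leftover, not a hint that a lossy comparison is intended --- the constant in the inequality is exactly $2$.)

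The paper's proof is entirely different and rests on an ingredient you did not consider: the K\"ahler--Einstein (moment measure) construction. One takes as the \emph{source} measure $\mu = \exp(-\Phi)\,dx$ on $\Real^d$, where $\Phi$ solves $\exp(-\Phi) = \det D^2 \Phi \cdot \exp(-W(\nabla \Phi))$; existence and smoothness are provided by Berman--Berndtsson and Cordero-Erausquin--Klartag, and require precisely the barycenter hypothesis. Thus $V = \Phi$ in Theorem~\ref{thm:refined-BL}, and $\nabla\Phi$ pushes $\mu$ onto the given $\nu$. Applying the theorem to $f = h(\nabla\Phi)$ (so $\Var_\mu(f) = \Var_\nu(h)$), dropping the $Q_H$ term, and conjugating the quadratic form by $D^2\Phi$ yields
\[
\Var_\nu(h) \le 2 \int \scalar{\brac{(D^2\Phi)^{-1} + D^2 W(\nabla\Phi)}^{-1}\nabla h(\nabla\Phi), \nabla h(\nabla\Phi)}\, d\mu.
\]
The decisive input is Klartag's estimate $\mathrm{tr}(D^2\Phi) \le 2R^2$ from \cite{Klartag-PayneWeinbergerViaMomentMeasures}, which gives $(D^2\Phi)^{-1} \ge \tfrac{1}{2R^2}\mathrm{Id}$ pointwise; substituting this and changing variables back to $\nu$ gives the claim. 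The $\tfrac{1}{2R^2}\mathrm{Id}$ term thus arises not from $\nabla W$ at all, but from the Hessian of the K\"ahler--Einstein potential, whose size is controlled by the support radius.
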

\begin{proof}
We would like to apply the Refined Brascamp--Lieb Theorem \ref{thm:refined-BL} with $V = \Phi$, i.e. obtain a convex fixed-point solution $V = \Phi$ to the Monge-Amp\`ere equation (\ref{eq:change-of-variables}). The resulting non-linear elliptic PDE
\begin{equation} \label{eq:Kahler-change}
\exp(-\Phi) = \det D^2 \Phi \cdot \exp(-W(\nabla \Phi)) 
\end{equation}
is referred to (at least in the complex setting) as the K{\"a}hler--Einstein equation.
Under the necessary assumption that the barycenter of $\nu$ is at the origin, the existence (and uniqueness up to translation) of such a solution was proved by Berman and Berndtsson \cite{BermanBerndtsson-RealMongeAmpere} (see also Wang and Zhu \cite{WangZhu-KahlerEinstein} for the analogous result in the complex setting and relation to Kahler-Einstein metrics, and the work of D. Cordero--Erausquin and B. Klartag \cite{CorderoKlartag-MomentMeasures}, where existence and uniqueness of weak solutions to (\ref{eq:Kahler-change}) for general Borel measures $\nu$ was precisely characterized). As verified in \cite{BermanBerndtsson-RealMongeAmpere}, the usual regularity theory for the Monge--Amp\`ere equation implies that $\Phi$ is smooth on $\Real^d$, so that (\ref{eq:Kahler-change}) indeed holds in the classical sense, yielding in particular that $\Phi$ is strongly convex. Let $\mu  = \exp(-\Phi) dx$ denote the corresponding log-concave probability measure. 

Now given $h \in C^1(\Omega)$, let us apply the Refined Brascamp--Lieb Theorem \ref{thm:refined-BL} to bound the variance of $f = h(\nabla \Phi)$ with respect to $\mu$:
\begin{align*}
\Var_ {\nu}(h) & = \Var_ {\mu}(f) \le 
 2  \int \scalar{  \bigl(  D^2 \Phi + D^2 \Phi \cdot D^2 W(\nabla \Phi) \cdot D^2 \Phi \bigr)^{-1}  \ \nabla f, \nabla f } d \mu
 \\& = 2  \int \scalar{ D^2 \Phi \cdot \Bigl(  D^2 \Phi + D^2 \Phi \cdot D^2 W(\nabla \Phi) \cdot D^2 \Phi \Bigr)^{-1}  D^2 \Phi \ \nabla h(\nabla \Phi), \nabla h(\nabla \Phi) } d \mu \\
 & = 2 \int \scalar{ \brac{ (D^2 \Phi)^{-1} + D^2 W(\nabla \Phi) }^{-1}   \ \nabla h(\nabla \Phi), \nabla h(\nabla \Phi) } d \mu .
\end{align*}
Applying the estimate
\[
\text{tr} (D^2 \Phi )\le   2 R^2
\]
obtained by Klartag in \cite{Klartag-PayneWeinbergerViaMomentMeasures}, so that in particular
\[
( \; 0 \leq \; ) \; D^2 \Phi \leq 2 R^2 \; \rm{Id} ,
\]
 we obtain
\begin{align*}
\Var_ {\nu}(h) & \le 2  \int \scalar{ \Bigl(  \frac{1}{2 R^2} \mbox{\rm{Id}} +  D^2 W(\nabla \Phi)  \Bigr)^{-1} \nabla h(\nabla \Phi), \nabla h(\nabla \Phi) } d \mu
\\&  =  2  \int \scalar{ \Bigl(  \frac{1}{2 R^2} \mbox{\rm{Id}} +  D^2 W  \Bigr)^{-1} \nabla h, \nabla h } d \nu ,
\end{align*}
as asserted. 
\end{proof}

\begin{rem}
The above argument was used by Klartag in \cite{Klartag-PayneWeinbergerViaMomentMeasures} to deduce the following Payne--Weinberger-type estimate:
\begin{equation} \label{eq:PW}
\Var_{\nu}(h) \le 2 R^2 \int \abs{\nabla h}^2 d\nu .
\end{equation}
Our improvement over this estimate is due to our usage of the Refined Brascamp--Lieb Theorem \ref{thm:refined-BL}, instead of the classical Brascamp--Lieb inequality (\ref{eq:classical-BL}) employed by Klartag (in one of his alternative derivations). Note that our estimate (\ref{eq:BL-compact}) nicely fuses between the Payne-Weinberger estimate (\ref{eq:PW}) and the Brascamp--Lieb one (\ref{eq:classical-BL}). 
\end{rem}

\section{Entropic version of the Brascamp-Lieb inequality} \label{sec:entropic-BL}

Let $\mu = \exp(-V) dx$  denote a log-concave probability measure supported on a convex set $\Omega \subset \Real^d$ having smooth (possibly empty) boundary, and assume that the potential $V$ is strongly convex and smooth in $\Omega$. Let us now push-forward $\mu$ onto the measure $\nu = \exp(-W) dx$ using the mapping $T(x) = \nabla \Phi(x)$ with $\Phi  =  V$. We equip $\Omega$ with the Riemannian metric $g = D^2 \Phi = D^2 V$.  

\medskip

According to the Bakry--\'Emery criterion (Theorem \ref{thm:BE}), the entropic version
\begin{equation} \label{eq:entropic-BL}
\Ent_{\mu} (f^2) \le \frac{2}{\rho} \int  \scalar{ (D^2 V)^{-1} \nabla f, \nabla f } \ d \mu
\end{equation}
of the classical  Brascamp--Lieb inequality
\begin{equation} \label{eq:BL-again}
\Var_{\mu} (f) \le \int \langle (D^2 V)^{-1} \nabla f, \nabla f \rangle \ d \mu ,
\end{equation}
holds for all $f \in C^1(\Omega)$, provided the Geometric Convexity Assumptions are satisfied on $(\Omega,g = D^2 V,\mu)$ and
\[
\Ric_{g,\mu} \ge \rho \cdot D^2 V \text{ on $\Omega$ }.
\] 
 
In this section, we would like to obtain a tractable condition for verifying the validity of (\ref{eq:entropic-BL}). 
Note that while any log-concave measure satisfies (\ref{eq:BL-again}), it is known (see Bobkov--Ledoux \cite{BobkovLedoux}) that (\ref{eq:entropic-BL}) \emph{cannot} hold (with any $\rho > 0$) for arbitrary log-concave measures. 

\subsection{Expressing everything solely as a function of $V$}

We start by expressing the generalized Ricci tensor $\Ric_{g,\mu}$ as a function of $V$ only.  Let $V^* : \Real^n \rightarrow \Real \cup \set{+\infty}$ denote
the Legendre transform of $V$ (see e.g. \cite[Section 12]{RockafellarBook}), given by
\begin{equation} \label{eq:Legendre}
V^*(y) = \sup_{x \in \Omega} \bigl( \scalar{x,y} - V(x) \bigr).
\end{equation}
It will be sufficient for our purposes to restrict $V^*$ onto the domain
\[
\Omega^* = \intr (\nabla V(\Omega)) ,
\]
where it is necessarily finite. 
Smoothness and strong convexity of $V$ on $\Omega$  imply that given $y \in \Omega^*$, the supremum in (\ref{eq:Legendre}) is attained at the unique point $x = \nabla V^*(y)$ where $y = \nabla V(x)$, and that $V^*$ is smooth and strongly convex on $\Omega^*$ \cite{RockafellarBook}. The following well-known identities then easily follow:
\[
 V(x) + V^*(\nabla V(x)) = \scalar{ x, \nabla V(x) } ~ , ~ \nabla V^*(\nabla V(x)) = x ~,~ \ D^2 V^*(\nabla V) \cdot D^2 V = \mbox{Id} ,
\]
for every $x$ in the interior of $\Omega$.
Recalling the change-of-variables formula (\ref{eq:log-change}):
\[
W(\nabla V) = V + \log \det D^2 V ,
\]
we set
\[
F = W +  V^*.
\]
Using the above properties of the Legendre transform, we see that
\[
F(\nabla V(x)) = \scalar{ x, \nabla V(x) } + \log \det D^2 V(x) ,
\]
and hence
\begin{equation} \label{eq:F}
F(y) =  \scalar{ y, \nabla V^*(y) } - \log \det D^2 V^*(y) ~,~ \ y \in \Omega^* . 
\end{equation}
Denoting as usual
\begin{equation} \label{eq:H}
H_{ij} = \mbox{{\rm Tr}} \Bigl[ (D^2 V)^{-1} D^2 V_{x_i}  (D^2 V)^{-1} D^2 V_{x_j} \Bigr] ,
\end{equation}
it is then easy to check that Theorem \ref{thm:Kol-compute} yields the following:

\begin{prop} \label{prop:PhiV-Compute}
Let $\mu = \exp(-V) dx$  denote a log-concave probability measure supported on a convex domain $\Omega \subset \Real^d$, and assume that the potential $V$ is strongly convex and smooth in $\Omega$. Then the weighted manifold
\[
 (\Omega,g = D^2 V, \mu = \exp(-V) dx)
\]
satisfies
\begin{equation} \label{eq:entropic-Ric}
\mbox{\rm{Ric}}_{g,\mu} = \frac{1}{4} H + \frac{1}{2} D^2 V \cdot  D^2 F (\nabla V) \cdot   D^2 V,
\end{equation}
with $F$ and $H$ defined by (\ref{eq:F}) and (\ref{eq:H}). 
\end{prop}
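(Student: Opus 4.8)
The plan is to deduce the identity directly from Theorem \ref{thm:Kol-compute} applied with the special choice $\Phi = V$, followed by a short rewriting of the $W$-dependent term via Legendre duality. Since $V$ is assumed strongly convex and smooth on $\Omega$, the gradient map $\nabla V$ is a diffeomorphism onto its image, and the auxiliary potential $W$ determined by the change-of-variables formula (\ref{eq:log-change}), namely $W(\nabla V) = V + \log\det D^2 V$, is smooth on $\Omega^* = \intr(\nabla V(\Omega))$; likewise $V^*$ is smooth and strongly convex there, so all the Hessians appearing below are well-defined. With these observations in place, Theorem \ref{thm:Kol-compute} (with $\Phi = V$, so that $\nu = (\nabla V)_*\mu = \exp(-W)\,dx$) gives
\[
\Ric_{g,\mu} = \frac14 H + \frac12\Bigl(D^2 V + D^2 V \cdot D^2 W(\nabla V) \cdot D^2 V\Bigr),
\]
where $H$ is precisely the matrix (\ref{eq:H}), being the $\Phi = V$ instance of the $H$ from Theorem \ref{thm:Kol-compute}.

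Next I would factor the parenthesized term. Writing $D^2 V = D^2 V \cdot (D^2 V)^{-1} \cdot D^2 V$, the bracket becomes $D^2 V \cdot \bigl((D^2 V)^{-1} + D^2 W(\nabla V)\bigr) \cdot D^2 V$, so it remains to identify the inner matrix with $D^2 F(\nabla V)$. The key input is the pair of Legendre identities recalled in the text: for $x$ in the interior of $\Omega$ one has $\nabla V^*(\nabla V(x)) = x$, and differentiating once more, $D^2 V^*(\nabla V(x)) \cdot D^2 V(x) = \mathrm{Id}$, that is, $D^2 V^*(\nabla V) = (D^2 V)^{-1}$.

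Finally, since $F = W + V^*$ by definition, we have $D^2 F = D^2 W + D^2 V^*$ on $\Omega^*$, and evaluating at $y = \nabla V(x)$ yields $D^2 F(\nabla V) = D^2 W(\nabla V) + D^2 V^*(\nabla V) = D^2 W(\nabla V) + (D^2 V)^{-1}$. Substituting this back gives $D^2 V + D^2 V \cdot D^2 W(\nabla V) \cdot D^2 V = D^2 V \cdot D^2 F(\nabla V) \cdot D^2 V$, which together with the displayed formula for $\Ric_{g,\mu}$ is exactly (\ref{eq:entropic-Ric}). I do not expect any genuine obstacle here: the argument is entirely routine once Theorem \ref{thm:Kol-compute} is available, the only point deserving a moment's care being the finiteness and smoothness of $W$ and $V^*$ on $\Omega^*$ so that the second-order derivatives are meaningful, and this has already been established in the preceding discussion.
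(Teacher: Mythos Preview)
Your proof is correct and matches the paper's approach exactly: the paper simply states that the formula ``is then easy to check'' from Theorem \ref{thm:Kol-compute}, and your computation is precisely that check, specializing $\Phi = V$ and using the Legendre identity $D^2 V^*(\nabla V) = (D^2 V)^{-1}$ together with $F = W + V^*$ to rewrite the $W$-term.
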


Recall by Lemma \ref{lem:H} that
\[
 H \ge \frac{1}{d} \nabla \log \det D^2 V \otimes \nabla \log \det D^2 V ,
\]
which by the properties of the Legendre transform is equivalent to
\[
 D^2 V^* \cdot H(\nabla V^*) \cdot D^2 V^* \ge \frac{1}{d} \nabla \log \det D^2 V^* \otimes \nabla \log \det D^2 V^*.
\]
Multiplying (\ref{eq:entropic-Ric}) by $D^2 V^*$ from both sides and evaluating at $\nabla V^*$, we obtain from (\ref{eq:entropic-BL}) the following:

\begin{thm} \label{thm:entropic-BL}
With the same assumptions as in Proposition \ref{prop:PhiV-Compute}, assume in addition that
\[
D^2 F + \frac{1}{2}   D^2 V^* \cdot H(\nabla V^*) \cdot D^2 V^* \ge 2 \rho(\nabla V^*) \cdot D^2 V^* ,
\]
for some function $\rho : \Omega \rightarrow \Real$. Then on $(\Omega,g = D^2 V, \mu = \exp(-V) dx)$ we have
\[
\Ric_{g,\mu}(x) \ge \rho(x) \cdot g(x) .
\]
In particular, if
\[
 D^2  F +  \frac{1}{2d} \nabla \log \det D^2 V^* \otimes \nabla \log \det D^2 V^*  \ge 2 \rho \cdot D^2 V^* 
\]
for some $\rho>0$, and $(\Omega,g,\mu)$ satisfies the Geometric Convexity Assumptions, then the following entropic Brascamp-Lieb inequality holds:
\[
\Ent_\mu(f^2) \le \frac{2}{\rho} \int \langle (D^2 V)^{-1} \nabla f, \nabla f \rangle \ d \mu \;\;\; \forall f \in C^1(\Omega) . 
\]
\end{thm}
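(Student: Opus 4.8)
The plan is to read the first assertion directly off the formula~(\ref{eq:entropic-Ric}) of Proposition~\ref{prop:PhiV-Compute} after performing the Legendre change of variables $y=\nabla V(x)$, $x=\nabla V^*(y)$, under which $(D^2V(x))^{-1}=D^2V^*(y)$ and $\Omega^*$ is the image of $\intr(\Omega)$. Since conjugating a symmetric-matrix inequality by the invertible symmetric matrix $D^2V^*(y)$ preserves it in both directions, the desired bound $\Ric_{g,\mu}(x)\ge\rho(x)D^2V(x)$ for all $x\in\intr(\Omega)$ is equivalent to
\[
D^2V^*(y)\,\Ric_{g,\mu}(\nabla V^*(y))\,D^2V^*(y)\ge\rho(\nabla V^*(y))\,D^2V^*(y)\qquad\forall y\in\Omega^* .
\]
Substituting~(\ref{eq:entropic-Ric}) into the left-hand side, the middle term $\tfrac12 D^2V\cdot D^2F(\nabla V)\cdot D^2V$ collapses to $\tfrac12 D^2F(y)$, while $\tfrac14 H$ becomes $\tfrac14 D^2V^*(y)\,H(\nabla V^*(y))\,D^2V^*(y)$; multiplying the resulting inequality through by $2$ reproduces verbatim the hypothesis $D^2F+\tfrac12 D^2V^*\cdot H(\nabla V^*)\cdot D^2V^*\ge 2\rho(\nabla V^*)\cdot D^2V^*$, proving $\Ric_{g,\mu}\ge\rho g$.

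For the ``in particular'' statement I would take $\rho$ to be the given positive constant and check it satisfies the hypothesis of the first part. Applying Lemma~\ref{lem:H} with $\Phi=V$ gives $H\ge\tfrac1d\nabla\log\det D^2V\otimes\nabla\log\det D^2V$; conjugating by $D^2V^*(y)$ and using the Legendre identity $\log\det D^2V(x)=-\log\det D^2V^*(\nabla V(x))$, whose $x$-gradient together with $D^2V^*(\nabla V)\,D^2V=\mathrm{Id}$ yields $D^2V^*(y)\,\nabla\log\det D^2V(\nabla V^*(y))=-\nabla\log\det D^2V^*(y)$, transforms this into
\[
D^2V^*(y)\,H(\nabla V^*(y))\,D^2V^*(y)\ \ge\ \tfrac1d\,\nabla\log\det D^2V^*(y)\otimes\nabla\log\det D^2V^*(y).
\]
Adding $D^2F$ and invoking the assumed inequality $D^2F+\tfrac1{2d}\nabla\log\det D^2V^*\otimes\nabla\log\det D^2V^*\ge 2\rho D^2V^*$ shows that the constant $\rho$ meets the first-part hypothesis (there $\rho(\nabla V^*)\equiv\rho$), hence $\Ric_{g,\mu}\ge\rho g$ on $(\Omega,g=D^2V,\mu)$.

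Finally, since $(\Omega,g,\mu)$ is assumed to satisfy the Geometric Convexity Assumptions, I would invoke the Bakry--\'Emery criterion (Theorem~\ref{thm:BE}) to conclude $\Ent_\mu(f^2)\le\tfrac2\rho\int|\nabla_g f|^2\,d\mu$ for all $f\in C^1(\Omega)$, and then use the preliminary identity $|\nabla_g f|^2=\scalar{(D^2V)^{-1}\nabla f,\nabla f}$ valid for $g=D^2V$ to rewrite the right-hand side in the stated form. I expect no genuine analytic obstacle here: the substantive content is already packaged into Proposition~\ref{prop:PhiV-Compute}, Lemma~\ref{lem:H} and Theorem~\ref{thm:BE}, and the only point demanding care is the Legendre-transform bookkeeping — in particular the chain-rule identity for $\nabla\log\det D^2V$ under $y=\nabla V(x)$ and keeping precise track of the point at which each Hessian (of $V$, $V^*$, $F$, and $H$) is evaluated after the substitution.
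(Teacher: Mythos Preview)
Your proposal is correct and follows essentially the same approach as the paper: the paper derives the first assertion by ``multiplying (\ref{eq:entropic-Ric}) by $D^2 V^*$ from both sides and evaluating at $\nabla V^*$,'' obtains the $\frac{1}{2d}\nabla\log\det D^2 V^*\otimes\nabla\log\det D^2 V^*$ bound from Lemma~\ref{lem:H} via the Legendre transform exactly as you do, and then invokes the Bakry--\'Emery criterion (Theorem~\ref{thm:BE}) together with $|\nabla_g f|^2=\scalar{(D^2V)^{-1}\nabla f,\nabla f}$. Your write-up is in fact slightly more detailed than the paper's, spelling out the chain-rule identity $D^2V^*(y)\,\nabla\log\det D^2V(\nabla V^*(y))=-\nabla\log\det D^2V^*(y)$ that the paper leaves implicit under ``properties of the Legendre transform.''
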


\begin{rem}
In \cite{BobkovLedoux}, Bobkov and Ledoux obtained a result in the same direction, which however does not seem to be directly comparable to Theorem \ref{thm:entropic-BL}. Namely, they showed that
\[
\Ent_\mu (f^2) \le 3 \int  \langle (D^2 V)^{-1} \nabla f, \nabla f \rangle \ d \mu \;\;\; \forall f \in C^1(\Omega) ,
\]
provided $V$ is convex and $x \to V_{hh}(x)$ is concave for every $h$.\\
Note that in our formulation, it is enough to check convexity of the single function $F - 2 \rho V^*$, unlike in the Bobkov--Ledoux result.
\end{rem}

\subsection{One-Dimensional Case}

In the one-dimensional case $\Omega \subset \Real$, we can use either Theorem \ref{thm:entropic-BL} or apply the following  exact expression for the space $(\Omega, g = V''(x) (dx)^2 , \exp(-V) dx)$: 
\[
\Ric_{g,\mu} =  V'' + \frac{1}{2} \frac{V^{(4)}}{V''} -\frac{3}{4} \Bigl( \frac{V'''}{V''}\Bigr)^2
- \frac{1}{2} \frac{V' V'''}{V''} 
\]
(recall that all tensors are identified in the one-dimensional case with scalar functions). 
Since the space is one-dimensional, the Ricci part of the tensor vanishes and in fact the right-hand side equals to the
Hessian of the  corresponding potential.
\begin{example}
Consider the following probability measure on $\Real_+$:
\[
\mu_q = \frac{1}{Z_{q,c}}  \exp(-c x^q) dx ,
\]
with $1 < q \leq 2$ and $c > 0$. The manifold \[
((0,\infty) , g = c  q(q-1) x^{q-2} (dx)^2)
\]
is clearly isometric to $(0,\infty)$ with the standard Euclidean metric, and $((0,\infty),g,\mu_q)$ satisfies the Geometric Convexity Assumptions. A calculation yields
\begin{equation}
\label{ric-q}
\Ric_{g,\mu_q} =  \frac{cq^2}{2} x^{q-2} + \frac{q(2-q)}{4} x^{-2} \geq  \frac{q}{2(q-1)} g. 
\end{equation}
This implies the following entropic Brascamp-Lieb inequality:
\[
\Ent_{\mu_q}(f^2) \le \frac{4}{cq^2} \int  x^{2-q} (f')^2 \ d \mu_q \;\;\; \forall f \in C^1(\Real_+) ,
\]
which by the tensorization property of the log-Sobolev inequality \cite[Corollary 5.7]{Ledoux-Book} yields the following entropic Brascamp-Lieb inequality for the product measure:
\begin{equation}
\label{exppower}
\Ent_{\mu_q^{\otimes d}}(f^2) \le \frac{4}{cq^2} \int \sum_{i=1}^d x^{2-q}_i  f_{x_i}^2 \ d \mu_q^{\otimes d}  \;\;\; \forall f \in C^1(\Real_+^d) .
\end{equation}
By taking the limit $q \rightarrow 1$, the above in fact applies to the entire range $q \in [1,2]$.
Applying the change of variables $t_i = x^q_i$, we obtain the following inequality due to D.~Bakry \cite{Bakry-JacobiSemigroups}:
\[
\Ent_{\nu}(f^2) \le \frac{4}{cq} \int\sum_{i=1}^{d} t^{\frac{1}{q}}_i   f_{t_i}^2 d \nu \;\;\; \forall q \in [1,2] \;\;\; \forall f \in C^1(\Real_+^d) ,
\]
where $\nu = \exp(-c \sum_{i=1}^d t_i) dt$ is the exponential measure on $\Real_+^d$. 
\end{example}

\begin{example}
It is natural to ask whether it is possible to obtain an analogue of (\ref{exppower}) for $q > 2$. It is easy to check that for $V(x)=x^q$ and $q>2$, 
the corresponding generalized Ricci tensor is negative for small values of $x$ (see (\ref{ric-q})). To overcome this difficulty, let us consider an even function 
$V$ on $\mathbb{R}$ which is quadratic for small values of $\abs{x}$ and behaves like $\abs{x}^q$ for large values of $\abs{x}$. More precisely, let
$V^*$, the Legendre transform of $V$, be defined by
\[
(V^*)''(y) = \min\Bigl\{\frac{1}{p}, \frac{1}{p} \abs{y}^{p-2}\Bigr\}\;\; , \;\; V^*(0) = (V^*)'(0) = 0 \;\; , \;\; p=\frac{q}{q-1}.
\]
A direct computation (involving our normalization with $\frac{1}{p}$ above) verifies that
\[
F(y) = y (V^*)'(y) - \log (V^*)''(y)
\]
is convex. Moreover, $F - 2 \rho V^{*}$ remains convex for small values of $\rho \in (0, \rho_q]$. Applying Theorem 
\ref{thm:entropic-BL}, we deduce that for any $q >2$,
\[
\Ent_{\mu}(f^2) \le \frac{2}{\rho_q} \int  \min(1, x^{2-q})  (f')^2  d \mu \;\;\; \forall f \in C^1(\Real_+) ,
\]
where $\mu = \frac{1}{Z} \exp(-V) dx$ on $\Real_+$.  
Note that $V$ is quadratic for small values of $x$, and that there exist constants $a_q,b_q > 0$ so that
\[
\abs{ V(x)  - a_q x^q } \leq b_q 
\]
 for large (and consequently all) values of $x \in \Real_+$. It follows by the Holley--Stroock perturbation lemma (e.g. \cite[Proposition 5.5]{Ledoux-Book}) and rescaling that
\[
\Ent_{\mu_q} (f^2) \le C_q \int  \min(1, x^{2-q})  (f')^2  d \mu_q \;\;\; \forall f \in C^1(\Real_+),
\]
where $\mu_q = \frac{1}{Z_q} \exp(-x^q) dx$ on $\Real_+$. By tensorization, we finally obtain
\[
\Ent_{\mu_q^{\otimes d}} (f^2) \le C_q \int \sum_{i=1}^d \min(1, x^{2-q}_i)  f_{x_i}^2 \ d \mu_q^{\otimes d} \;\;\; \forall q \in [2,\infty) \;\;\; \forall f \in C^1(\Real_+^d) . 
\]
\end{example}

The above examples of product measures should be compared to the more general analysis in the next section, in which product metrics other than $D^2 V$ will be considered.

\section{Product Metrics and Unconditional Convex Sets} \label{sec:unconditional}
 
In this section, we investigate the consequences of choosing a particularly simple metric $g = D^2 \Phi$ -- a \emph{product} metric. 
To this end, recall that by Theorem \ref{thm:Kol-compute}, the formula for the generalized Ricci tensor on $(\Real^n, g = D^2 \Phi , \mu = \exp(-V) dx)$ is
\[
\Ric_{g,\mu} = \frac{1}{4} H + \frac{1}{2} \Bigl(  D^2 V+  D^2 \Phi \cdot  D^2 W (\nabla \Phi) \cdot   D^2 \Phi \Bigr) ,
\]
and rewrite it as a function of $\Phi$. Differentiating twice the change of variables formula
\[
W(\nabla \Phi) = V + \log \det D^2 \Phi,
\]
we obtain:
\[
D^2 \Phi \cdot \nabla W(\nabla \Phi) = \nabla V + \nabla \log \det D^2 \Phi,
\]
\[
D^2 \Phi \cdot D^2 W(\nabla \Phi) \cdot D^2 \Phi + \sum_i  (D^2 \Phi)_{x_i} \cdot W_{x_i}(\nabla \Phi) 
= D^2 V + D^2 \log \det D^2 \Phi. 
\]
Collecting everything together, we deduce
\begin{equation} \label{BEVV}
\Ric_{g,\mu} =
D^2 V + \frac{1}{2} D^2 \log \det D^2 \Phi + \frac{1}{4} H -  \frac{1}{2}\sum_{i=1}^d (D^2 \Phi)_{x_i} \cdot
\langle (D^2 \Phi)^{-1} e_i , \nabla V  + \nabla  \log \det D^2 \Phi\rangle,
\end{equation}
where as usual
\[
H_{ij} = \mbox{\rm{Tr}} \Bigl[ (D^2 \Phi)^{-1} D^2 \Phi_{x_i}  (D^2 \Phi)^{-1} D^2 \Phi_{x_j} \Bigr] ~.
\]
A careful computation in the case of a product metric then verifies the following:

\begin{prop} \label{prop:product-Ric}
Let $\Phi$ denote a smooth strongly convex function on $\Omega \subset \Real^d$ of the form
\[
\Phi(x) = \sum_{i=1}^d \Phi_{i}(x_i)  .
\]
Set
\[
 u_i  = 1 / \sqrt{\Phi''_{i}} .
\]
Then the generalized Ricci tensor on $(\Omega, g = D^2 \Phi , \mu = \exp(-V) dx)$ is given by
\[
\Ric_{g,\mu} = D^2 V + \mbox{\rm{diag}}\Bigl\{ V_{x_i}  \frac{u_i'(x_i)}{u_i(x_i)} - \frac{u''_i(x_i)}{u_i(x_i)} \Bigr\}.
\]
\end{prop}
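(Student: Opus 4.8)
The plan is to substitute the product potential $\Phi(x) = \sum_{i=1}^d \Phi_i(x_i)$ into formula (\ref{BEVV}), observe that every term there except $D^2 V$ becomes diagonal, and then re-express the resulting diagonal entries through $u_i = 1/\sqrt{\Phi''_{i}}$. Formula (\ref{BEVV}) is local, so it applies verbatim on $\Omega$ with $g = D^2\Phi$.

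First I would record the elementary ingredients. Since $D^2\Phi = \mbox{\rm{diag}}\{\Phi''_{i}(x_i)\}$ (positive by strong convexity), we have $(D^2\Phi)^{-1} = \mbox{\rm{diag}}\{1/\Phi''_{i}(x_i)\}$ and $\log\det D^2\Phi = \sum_{i=1}^d \log\Phi''_{i}(x_i)$; because the $i$-th summand depends on $x_i$ alone, its gradient is the vector with components $\Phi'''_{i}/\Phi''_{i}$, and its Hessian is $\mbox{\rm{diag}}\{\Phi_i^{(4)}/\Phi''_{i} - (\Phi'''_{i}/\Phi''_{i})^2\}$, all off-diagonal entries vanishing. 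Moreover, only the $i$-th diagonal entry of $D^2\Phi$ depends on $x_i$, so $(D^2\Phi)_{x_i} = \Phi'''_{i}\, e_i \otimes e_i$; substituting this into the definition of $H$ gives $H_{ij} = \delta_{ij}\,(\Phi'''_{i}/\Phi''_{i})^2$, and substituting it, together with $(D^2\Phi)^{-1} e_i = e_i/\Phi''_{i}$, shows that the sum $\sum_{i}(D^2\Phi)_{x_i}\langle (D^2\Phi)^{-1}e_i,\nabla V + \nabla\log\det D^2\Phi\rangle$ appearing in (\ref{BEVV}) reduces to $\sum_{i=1}^d \big(\Phi'''_{i}/\Phi''_{i}\big)\big(V_{x_i} + \Phi'''_{i}/\Phi''_{i}\big)\, e_i \otimes e_i$.

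Collecting the contributions of (\ref{BEVV}), one finds that $\Ric_{g,\mu} - D^2 V$ is the diagonal matrix with $i$-th entry
\begin{align*}
& \frac12\Big(\frac{\Phi_i^{(4)}}{\Phi''_{i}} - \big(\tfrac{\Phi'''_{i}}{\Phi''_{i}}\big)^2\Big) + \frac14\big(\tfrac{\Phi'''_{i}}{\Phi''_{i}}\big)^2 - \frac12\,\frac{\Phi'''_{i}}{\Phi''_{i}}\Big(V_{x_i} + \frac{\Phi'''_{i}}{\Phi''_{i}}\Big) \\
& \qquad = \frac12\,\frac{\Phi_i^{(4)}}{\Phi''_{i}} - \frac34\big(\tfrac{\Phi'''_{i}}{\Phi''_{i}}\big)^2 - \frac12\,V_{x_i}\,\frac{\Phi'''_{i}}{\Phi''_{i}}.
\end{align*}
Finally, differentiating $u_i = (\Phi''_{i})^{-1/2}$ twice gives $u_i'/u_i = -\tfrac12\,\Phi'''_{i}/\Phi''_{i}$ and $u_i''/u_i = \tfrac34(\Phi'''_{i}/\Phi''_{i})^2 - \tfrac12\,\Phi_i^{(4)}/\Phi''_{i}$, so the displayed $i$-th entry equals exactly $-\,u_i''/u_i + V_{x_i}\,u_i'/u_i$; this is the asserted formula.

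The computation is elementary, so the only point requiring care is the bookkeeping: the coefficient of $(\Phi'''_{i}/\Phi''_{i})^2$ in the $i$-th diagonal entry receives $-\tfrac12$ from the Hessian of $\log\det D^2\Phi$, $+\tfrac14$ from $H$, and $-\tfrac12$ from the last sum of (\ref{BEVV}), for a net $-\tfrac34$; and one must verify that all off-diagonal entries genuinely cancel, which uses the separated form of $\Phi$ (each $\Phi_i$ a function of $x_i$ only) in an essential way. I expect this careful accounting of the several terms in (\ref{BEVV}), rather than any conceptual difficulty, to be the only real obstacle.
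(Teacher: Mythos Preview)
Your proof is correct and follows exactly the route the paper indicates: the paper simply states that ``a careful computation in the case of a product metric then verifies the following,'' referring to substituting the separated potential into formula (\ref{BEVV}), and you have carried out precisely that computation with all the bookkeeping checked.
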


\begin{rem} \label{rem:christoffel}
Since any one-dimensional metric is locally isometric to the standard Euclidean one, this also holds for product metrics, and our manifold is locally isometric to Euclidean. Consequently, the geometric Ricci tensor $\Ric_g$ vanishes in the above situation and hence $\Ric_{g,\mu} = {\mbox{\rm Hess}}_g P$ where $\mu = \exp(-P) d\vol_g$. In addition, this implies the vanishing of the Christoffel symbols $\Gamma^{i}_{jk}$, unless $i=j=k$. 
\end{rem}

We will apply the above formula for the study of measures on the principal orthant $(0,\infty)^d$. To this end, it will be useful to make the following:
\begin{defn}[Orthant Unconditional]
A convex set $\Omega \subset (0,\infty)^d$ is called orthant unconditional if every outer normal (with respect to the standard Euclidean structure) of $\partial \Omega \cap (0,\infty)^d$ has non-negative coordinates. Equivalently, a convex $\Omega$ is orthant unconditional if $\Omega = K \cap (0,\infty)^d$ where $K$ is an unconditional convex set, i.e. invariant under reflections with respect to the coordinate hyperplanes. Similarly, a measure $\mu$ on $\Real^d$ is called unconditional if it is invariant under the latter reflections. 
\end{defn}

Clearly $(0,\infty)^d$ is geodesically convex with respect to any product metric, as it is isometric to a product subset of Euclidean space (which may be bounded or unbounded, complete or not). The next lemma addresses the convexity of orthant unconditional convex subsets of $(0,\infty)^d$. 
\begin{lem} \label{lem:orthant-convex}
Let $\Omega \subset  (0,\infty)^d$ denote a relatively closed, orthant unconditional convex set, having smooth relative boundary $\partial_r \Omega = \partial \Omega \cap (0,\infty)^d$. Let $g(x) = \sum_{i=1}^d g_{i}(x_i)  (dx^i)^2$ denote a product metric on $(0,\infty)^d$. If
\begin{equation} \label{eq:g-decrease}
g_{i}' \leq 0 \;\;\; \forall i=1,\ldots,d ,
\end{equation}
then $\partial_r \Omega \subset (\Omega,g)$ is locally-convex, and moreover, both $(\Omega,g)$ and $(\intr(\Omega),g)$ are geodesically convex. 
\end{lem}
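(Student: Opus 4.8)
The plan is to reduce everything to two separate assertions: (i) local convexity of the relative boundary $\partial_r \Omega$ in the metric $g$, and (ii) geodesic convexity of $(\Omega,g)$ and $(\intr(\Omega),g)$. For (i), I would compute the second fundamental form $\II_{\partial_r\Omega, g}$ and compare it to the Euclidean one $\II_{\partial_r\Omega, g_0}$. Fix a point $x\in\partial_r\Omega$ with outer Euclidean unit normal $n_0$, which by the orthant unconditionality assumption has all coordinates $n_0^i \geq 0$. The outer $g$-unit normal $n_g$ is proportional to the vector with coordinates $g^{ii}(x)\, n_0^i$ (raising the index with the diagonal metric), suitably normalized. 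Using the formula $\II_{\partial_r\Omega,g}(X,Y) = g(\nabla^g_X n_g, Y)$ for $X,Y$ tangent to $\partial_r\Omega$, and expanding $\nabla^g$ via the Christoffel symbols (\ref{christoff}), one finds that the new second fundamental form equals a positive multiple of the Euclidean one plus a correction term coming from the Christoffel symbols; by Remark \ref{rem:christoffel} the only nonvanishing Christoffel symbols are $\Gamma^i_{ii} = g_i'/(2 g_i)$. The upshot of the computation is a formula of the schematic form
\[
\II_{\partial_r\Omega,g}(X,X) = (\text{positive factor}) \cdot \II_{\partial_r\Omega,g_0}(X,X) - \frac{1}{2}\sum_{i=1}^d \frac{g_i'(x_i)}{g_i(x_i)} \langle n_0, e_i\rangle \, X_i^2 \cdot (\text{positive factor}),
\]
where the sign of the correction term is governed by $-g_i' \langle n_0,e_i\rangle$. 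Since $\Omega$ is convex, $\II_{\partial_r\Omega,g_0}\geq 0$; since $g_i'\leq 0$ by (\ref{eq:g-decrease}) and $\langle n_0,e_i\rangle = n_0^i \geq 0$ by orthant unconditionality, the correction term is non-negative as well. Hence $\II_{\partial_r\Omega,g}\geq 0$, proving local convexity.

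For (ii), since $(0,\infty)^d$ with the product metric $g$ is isometric, via the coordinatewise change of variables $y_i = \int_{1}^{x_i}\sqrt{g_i(t)}\,dt$, to a rectangular box $\prod_i I_i \subset \Real^d$ (possibly unbounded, possibly open or half-open) with the Euclidean metric, geodesic convexity of $(0,\infty)^d$ is immediate. To pass to $\Omega$, I would check that the image $\tilde\Omega$ of $\Omega$ under this isometry is Euclidean-convex as a subset of the box; this is exactly what local convexity of $\partial_r\Omega$ plus connectedness gives, since a relatively closed connected subset of a convex set in $\Real^d$ with locally convex smooth relative boundary is convex. Once $\tilde\Omega$ is convex in $\Real^d$, any two of its points are joined by the Euclidean straight segment, which stays in $\tilde\Omega$; pulling back gives a distance-minimizing $g$-geodesic in $\Omega$, so $(\Omega,g)$ is geodesically convex. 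For $(\intr(\Omega),g)$ one uses that the interior of a convex set is convex and that the straight segment between two interior points stays in the interior.

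The step I expect to be the main obstacle is the boundary computation in (i): one must carefully track how the unit normal and the tangent frame transform under the metric change, and verify that the Christoffel correction has exactly the sign dictated by (\ref{eq:g-decrease}) and orthant unconditionality, rather than the opposite. In particular one must be careful that the relevant quantity is $g_i' \langle n_0, e_i\rangle$ with both factors of the right sign on $\partial_r\Omega$; the hypothesis that $\Omega \subset (0,\infty)^d$ and that its outer normals have non-negative coordinates is precisely what is needed here. A secondary technical point is justifying that local convexity of the smooth relative boundary upgrades to global (geodesic) convexity of $\Omega$ — here one should either invoke a standard characterization of convex sets via local supporting hyperplanes after transporting to the Euclidean box, or argue directly that a locally convex connected hypersurface bounding a domain inside a convex set must bound a convex domain.
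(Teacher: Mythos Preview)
Your approach is correct but takes a genuinely different route from the paper's. You split the argument into (i) a direct second-fundamental-form computation for local convexity of $\partial_r\Omega$, and (ii) a local-to-global step via the isometry to a Euclidean box followed by a Tietze--Nakajima-type statement. The paper instead proves a single auxiliary claim that handles both parts at once: any smooth Euclidean-convex function $F$ on $(0,\infty)^d$ with $F_{x_i}\geq 0$ is also $g$-convex, since $(\mathrm{Hess}_g F)_{ij} = D^2_{ij}F - \Gamma^k_{ij}F_{x_k}$ and the only nonzero Christoffel symbol is $\Gamma^k_{kk}=g_k'/(2g_k)\leq 0$. This is then applied to the gauge function $p(x)=\min\{t>0:x\in t\Omega\}$, which is Euclidean-convex with non-negative partials by orthant unconditionality. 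From $g$-convexity of $p$ and $p|_{\partial_r\Omega}\equiv 1$ one reads off $\II_{\partial_r\Omega,g}\geq 0$ immediately; and since $t\mapsto p(\gamma(t))$ is convex along any $g$-geodesic, a geodesic with endpoints in $\{p\leq 1\}$ (resp.\ $\{p<1\}$) stays there, giving geodesic convexity without any local-to-global appeal. The paper's route is shorter and sidesteps precisely the ``secondary technical point'' you flagged --- upgrading local convexity of only the smooth relative boundary to global convexity of the image in the box, where one must worry about the pieces of $\partial\tilde\Omega$ lying on the box boundary and about relative (rather than absolute) closedness. Your direct $\II_g$ computation, on the other hand, has the virtue of making the role of the hypotheses $g_i'\leq 0$ and $n_0^i\geq 0$ completely transparent at the level of the second fundamental form itself.
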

\begin{proof}
First, we claim that it suffices to show that any smooth convex function $F$ on $(0,\infty)^d$ with respect to the Euclidean metric, such that $F_{x_i} \ge 0$ for every $i=1,\ldots,d$, is also convex with respect to the metric $g$. The latter means that $\mbox{\rm{Hess}}_g (F) \geq 0$, or equivalently, that for any $g$-geodesic $t \mapsto \gamma(t)$ it holds that $t \mapsto F(\gamma(t))$ is convex in the usual sense. 

Indeed, this property implies the asserted convexity properties of $(\Omega,g)$ by inspecting the gauge function $p(x) := \min \set{ t > 0 ; x \in t \Omega}$ on $(0,\infty)^d$. Note that $p(x) \leq 1$ iff $x \in \Omega$ and $p(x) = 1$ iff $x \in \partial_r \Omega$. Furthermore, $p$ is clearly Euclidean convex and satisfies $p_{x_i} \ge 0$ thanks to the unconditionality property. Our ansatz would imply that $p$ is $g$-convex, and since $p|_{\partial_r \Omega} = 1$, it follows that the second fundamental form on $\partial_r \Omega$ is non-negative, yielding local convexity. The geodesic convexity is established similarly: a geodesic $\gamma : [0,1] \rightarrow (0,\infty)^d$ connecting two points in $\Omega$ ($\intr(\Omega)$) must lie entirely inside $\Omega$ ($\intr(\Omega)$), since $p(\gamma(t))$ is convex and $p(\gamma(0)), p(\gamma(1)) \leq 1 ( < 1)$. 

To establish our ansatz, recall by Remark \ref{rem:christoffel} that $\Gamma^{k}_{ij}=0$ unless $i=j=k$, and note by (\ref{eq:g-decrease})
 that
\[
\Gamma^k_{k,k} = \frac{1}{2} g^{k,k} \frac{\partial g_{k,k}}{\partial x^k} = \frac{1}{2} \frac{g_k'}{g_k} \leq 0 .
\]
Consequently,
\[
(\mbox{\rm{Hess}}_g)_{i,j} F = D^2_{i,j} F - \Gamma^k_{i,j} F_{x_k} =  \set{D^2  F - \mbox{\rm{diag}}(\Gamma^k_{k,k} F_{x_k})}_{i,j} ,
\]
and we see that $\mbox{Hess}_g F \geq 0$, as asserted. 
\end{proof}

\subsection{Polynomially decaying product metrics}

In this subsection, we specialize to the case $u_i(x) = u(x) = \abs{x}^p$, $p \in (0,1)$. 

\begin{thm} \label{thm:product-poly}
Let $\mu = \exp(-V) dx$ denote a probability measure with smooth and positive density, supported in $\Omega$, an orthant unconditional convex subset of $(0,\infty)^d$. Then the generalized Ricci tensor of the weighted manifold
\[
 \brac{\Omega  , g_p = \sum_{i=1}^d  x_i^{-2p} (dx^i)^2 , \mu = \exp(-V) dx}
\]
satisfies
\[
\Ric_{g_p,\mu} = D^2 V + \mbox{\rm{diag}}\Bigl\{ V_{x_i} \frac{p}{x_i}  + \frac{p (1-p)}{x^2_i} \Bigr\} .
\]
Consequently, for all $f \in C^1(\Real_+^d)$ and $p \in (0,1)$:
\begin{enumerate}
\item If $Ric_{g_p,\mu} > 0$ then
\[
\Var_{\mu}(f) \le  \int \scalar{ \Ric_{g_p,\mu}^{-1}  \nabla f, \nabla f } d \mu .
\]
\item If
\[
 D^2 V \geq 0 ~,~ V_{x_i} \geq 0 \;\;\; \forall i=1,\ldots d \;\; , 
 \]
 then
 \[
 \Var_{\mu}(f) \le  4 \int \sum_{i=1}^d x_i^2 f_{x_i}^2 d\mu .
 \]
 In particular, if $\Omega \subset [0,R]^d$ then
 \[
  \Var_{\mu}(f) \le  4  R^2 \int \abs{\nabla f}^2 d\mu  .
 \]
 \item
  If
\[
 D^2 V \geq 0 ~,~ V_{x_i} \geq \lambda \;\;\; \forall i=1,\ldots d \;\; , 
 \]
 then
 \[
 \Var_{\mu}(f) \le  \frac{1}{\lambda} \int \sum_{i=1}^d x_i  f_{x_i}^2 d\mu .
 \]
 \item If $\Omega \subset [0,R]^d$ and
 \[
 D^2 V \geq 0 ~,~ V_{x_i} \geq 0 \;\;\; \forall i=1,\ldots d \;\; , 
 \]
 then for every $p \in (0,1)$, $\Ric_{g_p,\mu} \geq \rho_p g_p$ with
 \[
 \rho_p := \frac{p (1-p)}{R^{2 - 2p}} ,
 \]
 and hence
 \begin{equation} \label{eq:diagonal-BE}
\Ent_{\mu}(f^2)  \leq \frac{2}{\rho_p} \int \sum_{i=1}^d x_i^{2p} f_{x_i}^2 d\mu .
 \end{equation}
  In particular,
 \[
 \Ent_{\mu}(f^2)  \leq 8 R^2 \int \abs{\nabla f}^2 d\mu .
 \]
\item If
\[
 D^2 V \geq 0 ~,~ V_{x_i} \geq \lambda > 0  \;\;\; \forall i=1,\ldots d \;\; ,
 \]
 then for any $p \in [1/2,1)$, $\Ric_{g_p,\mu} \geq \rho_p g_p$ with
 \[
 \rho_p := \brac{\frac{\lambda p}{2- 2p}}^{2 - 2p} \brac{\frac{ p (1-p)}{2p-1}}^{2p-1} ,  \]
 and (\ref{eq:diagonal-BE}) holds. In particular,
 \[
  \Ent_{\mu}(f^2)  \leq \frac{4}{\lambda} \int \sum_{i=1}^d x_i f_{x_i}^2 d\mu .
 \]
 \end{enumerate}

\end{thm}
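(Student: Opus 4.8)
The plan is to apply Proposition~\ref{prop:product-Ric} with the product metric $g = g_p$, i.e. $\Phi_i(x_i)$ chosen so that $u_i = 1/\sqrt{\Phi_i''} = x_i^p$, and then to verify the lower bound $\Ric_{g_p,\mu} \geq \rho_p g_p$ by a pointwise argument on each diagonal entry. First I would record the explicit formula: with $u_i(x) = x^p$ one has $u_i'/u_i = p/x_i$ and $u_i''/u_i = p(p-1)/x_i^2$, so Proposition~\ref{prop:product-Ric} immediately gives
\[
\Ric_{g_p,\mu} = D^2 V + \mbox{\rm{diag}}\Bigl\{ V_{x_i} \frac{p}{x_i} + \frac{p(1-p)}{x_i^2}\Bigr\},
\]
which is the first displayed claim. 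Items (1)--(3) are then essentially immediate consequences of Theorem~\ref{thm:BL} (the Generalized Brascamp--Lieb Inequality), whose Geometric Convexity Assumptions hold by Lemma~\ref{lem:orthant-convex} since $g_p$ has decreasing coefficients $x_i^{-2p}$; for (1) one just inverts $\Ric_{g_p,\mu}$, and for (2) and (3) one discards $D^2 V \geq 0$ and uses $V_{x_i} \geq 0$ (resp. $\geq \lambda$) to get $\Ric_{g_p,\mu} \geq \mbox{\rm{diag}}\{p(1-p)/x_i^2\}$ for some fixed $p$ (taking $p\to 1$, so that $g_p \to \mbox{\rm{diag}}\{x_i^{-2}\}$ and $\scalar{\Ric_{g_p,\mu}^{-1}\nabla f,\nabla f} \leq \sum 4x_i^2 f_{x_i}^2$), resp. $\Ric_{g_p,\mu}\geq \mbox{\rm{diag}}\{\lambda p/x_i + p(1-p)/x_i^2\}$ and optimizing the scalar bound.

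For items (4) and (5), which require the Bakry--\'Emery criterion (Theorem~\ref{thm:BE}), the key point is to bound the ratio of the $i$-th diagonal entry of $\Ric_{g_p,\mu}$ to the $i$-th diagonal entry $x_i^{-2p}$ of $g_p$ from below by a constant $\rho_p$ uniform in $i$ and in $x \in \Omega$. Since $D^2 V \geq 0$, it suffices to bound
\[
\Bigl(V_{x_i}\frac{p}{x_i} + \frac{p(1-p)}{x_i^2}\Bigr) x_i^{2p} \;=\; p\,V_{x_i}\, x_i^{2p-1} + p(1-p)\, x_i^{2p-2}
\]
from below. In case (4) we only know $V_{x_i}\geq 0$, so the first term is $\geq 0$ and, since $2p - 2 < 0$ and $x_i \leq R$, the second term is $\geq p(1-p)R^{2p-2} = \rho_p$; plugging $p = 1/2$ gives $\rho_{1/2} = 1/(4R)$ and hence, after converting $\scalar{\Ric_{g_p,\mu}^{-1}\nabla f,\nabla f}$ to Euclidean gradient via $x_i \leq R$, the bound $8R^2\int|\nabla f|^2 d\mu$. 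In case (5) we have $V_{x_i}\geq\lambda > 0$, so we must minimize $h(x) := p\lambda x^{2p-1} + p(1-p)x^{2p-2}$ over $x \in (0,\infty)$; for $p \geq 1/2$ both exponents are handled by calculus: $h'(x) = 0$ gives $x = \frac{(1-p)(2-2p)}{\lambda(2p-1)} = \frac{2(1-p)^2}{\lambda(2p-1)}$ when $2p - 1 > 0$, and substituting yields exactly $\rho_p = \bigl(\frac{\lambda p}{2-2p}\bigr)^{2-2p}\bigl(\frac{p(1-p)}{2p-1}\bigr)^{2p-1}$; letting $p \to 1/2^+$ (where the formula extends by continuity, the $(2p-1)$-exponent term tending to $1$) gives $\rho_{1/2} = \lambda/4$ and the final $\frac{4}{\lambda}\int\sum x_i f_{x_i}^2 d\mu$.

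The main obstacle I anticipate is purely computational rather than conceptual: carrying out the one-variable minimization in case (5) cleanly, checking that the critical point lies in $(0,\infty)$ precisely when $p \in [1/2,1)$, that it is indeed a minimum (not a boundary infimum --- note $h(x)\to +\infty$ as $x\to 0^+$ when $2p - 2 < 0$ and $h(x)\to 0$ as $x\to\infty$ but then $\rho_p x^{-2p} \to 0$ too, so one must be slightly careful about what "$\Ric_{g_p,\mu} \geq \rho_p g_p$ on $\Omega$" needs --- in fact since $\Omega \subset (0,\infty)^d$ is only required to give a valid inequality, and for the stated corollaries $\Omega \subset [0,R]^d$ is imposed in (4) but not (5), one should double-check that in (5) the infimum of $h$ over all of $(0,\infty)$ is attained and positive, which is exactly what the $p \geq 1/2$ restriction buys). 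A secondary point to verify is that the Geometric Convexity Assumptions genuinely apply: Lemma~\ref{lem:orthant-convex} gives geodesic convexity of $(\Omega,g_p)$ and local convexity of $\partial_r\Omega$ since $g_i(x) = x^{-2p}$ is decreasing, and the density $\exp(-V)$ is smooth and positive by hypothesis, so Theorems~\ref{thm:BL} and~\ref{thm:BE} are applicable; one should also note that the boundary pieces on $\partial\Omega \setminus (0,\infty)^d$ (the coordinate hyperplanes) are at infinity in the $g_p$ metric for $p \geq 1/2$, so they cause no trouble, while for $p < 1/2$ one works on the geodesically convex interior as in Lemma~\ref{lem:orthant-convex}. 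Finally, the passage from the weighted inequality $\scalar{\Ric_{g_p,\mu}^{-1}\nabla f,\nabla f}$ to the stated forms $\sum x_i^{2p} f_{x_i}^2$ (resp. $x_i^2 f_{x_i}^2$, $x_i f_{x_i}^2$) is just unwinding $g_p^{-1} = \mbox{\rm{diag}}\{x_i^{2p}\}$ together with the diagonal lower bound on $\Ric_{g_p,\mu}$, which is routine.
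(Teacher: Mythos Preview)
Your overall strategy is the same as the paper's: compute $\Ric_{g_p,\mu}$ from Proposition~\ref{prop:product-Ric}, invoke Lemma~\ref{lem:orthant-convex} for the Geometric Convexity Assumptions, and then apply Theorems~\ref{thm:BL} and~\ref{thm:BE} after establishing diagonal lower bounds. However, several of your specific parameter choices are wrong and, as written, do not produce the stated constants.

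For Part~(2) you say ``taking $p\to 1$'', which cannot work: the only lower bound available under $V_{x_i}\geq 0$ is $\Ric_{g_p,\mu}\geq\mbox{\rm{diag}}\{p(1-p)/x_i^2\}$, and $p(1-p)\to 0$ as $p\to 1$, so the inverse blows up. (Note also that the metric $g_p$ itself plays no role in the Brascamp--Lieb bound, only $\Ric_{g_p,\mu}^{-1}$ does.) The paper takes $p=1/2$, where $p(1-p)=1/4$ yields $\Ric_{g_{1/2},\mu}^{-1}\leq\mbox{\rm{diag}}\{4x_i^2\}$ and hence the constant~$4$. It is Part~(3) that uses $p\to 1$: one retains only the term $\lambda p/x_i$, obtaining $\Ric_{g_p,\mu}^{-1}\leq\mbox{\rm{diag}}\{x_i/(\lambda p)\}\to\mbox{\rm{diag}}\{x_i/\lambda\}$. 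In Part~(5), your value $\rho_{1/2}=\lambda/4$ should be $\lambda/2$: at $p=1/2$ one has $h(x)=\lambda/2+1/(4x)$ with infimum $\lambda/2$ over $(0,\infty)$, and then $2/\rho_{1/2}=4/\lambda$ as required. Finally, your claim that the coordinate hyperplanes lie ``at infinity in the $g_p$ metric for $p\geq 1/2$'' is incorrect, since $\int_0^{x_0}t^{-p}\,dt<\infty$ for every $p<1$; the paper instead observes that $(\intr(\Omega),g_p)$ is geodesically convex by Lemma~\ref{lem:orthant-convex}, so Case~(2) of the Geometric Convexity Assumptions applies and no separate analysis of those boundary pieces is needed.
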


\begin{proof}
Note that when $p \in (0,1)$, since $x^{-p}$ is integrable at the origin but not at infinity, then $[0,\infty)$ equipped with the metric $x^{-2p} dx^2$ is isometric to $[0,\infty)$ equipped with the standard Euclidean one, even though it is formally not smooth at the boundary point $0$. Consequently, the manifold $(\Real_+^d,g_p)$ is isometric to $\Real_+^d$ with the Euclidean metric, and its boundary is convex. The fact that the metric formally explodes on the boundary may be corrected by applying the latter isometry; however, the boundary will remain non-smooth. This can be taken care of by either approximating $\Real_+^d$ from within by a convex set with smooth boundary, or simply by noting that $((0,\infty)^d,g_p)$ is geodesically convex, so that the Geometric Convexity Assumptions are in any case valid. When $\Omega$ is a proper subset of $(0,\infty)^d$, we may assume that it is relatively closed or open, and its geodesic (and local convexity) follow from Lemma \ref{lem:orthant-convex}. Consequently, the Geometric Convexity Assumptions are in force. 

Part (1) then follows by the Generalized Brascamp--Lieb Theorem \ref{thm:BL}. Part (2) follows by applying Part (1) with $p=1/2$, and noting that
\[
 \Ric_{g_{\frac{1}{2}},\mu} \geq \mbox{\rm{diag}}\Bigl\{ \frac{1}{4 x_i^2}  \Bigr\} .
 \]
 Part (3) follows by applying Part (1) with $p \rightarrow 1$ (or in fact $p=1$ directly), noting that
 \[
 \Ric_{g_p,\mu} \geq \mbox{\rm{diag}}\Bigl\{ \lambda \frac{p}{x_i} \Bigr\} .
 \]
 Part (4) follows by the Bakry--\'Emery criterion, since when $p \in (0,1)$, 
 \[
\frac{p (1-p)}{x^2} \geq \frac{p (1-p)}{R^{2 - 2p}} \frac{1}{x^{2p}}  \;\;\;  \forall x \in (0,R] . 
\] 
In particular, using $p=1/2$ and the trivial bound $x_i \leq R$, the last assertion of Part (4) follows. 
Part (5) follows similarly after noting that whenever $p \in [1/2,1)$,
\[
\lambda \frac{p}{x}  + \frac{p (1-p)}{x^2} \geq \brac{\frac{\lambda p}{2-2p}}^{2 - 2p} \brac{\frac{ p (1-p)}{2p-1}}^{2p-1} \frac{1}{x^{2p}} \;\;\; \forall x > 0 . 
\]
Specializing to the case $p=1/2$, the last assertion of Theorem \ref{thm:product-poly} follows. 
\end{proof}

\begin{rem}
Part (2) above was established by B. Klartag in \cite{KlartagMomentMap}; Klartag provided several alternative proofs, one of which indeed employed the classical Brascamp--Lieb inequality (\ref{eq:classical-BL}). Applied to $f(x) = \abs{x}^2$, it yields the so-called thin-shell estimate for the variance of $\abs{x}^2$ on (orthant) unconditional convex sets, first confirmed by Klartag in \cite{KlartagUnconditionalVariance} - cf. Subsection \ref{subsec:orth-to-full}. It also follows immediately from Part (2) that when $f$ is $1$-Lipschitz ($\abs{\nabla f} \leq 1$) on $\Omega$, then
\[
\Var_{\mu}(f) \le  4 \int \max_{i=1,\ldots,d} \abs{x_i}^2  d\mu .
\]
Using the known equivalence between concentration and isoperimetry on weighted manifolds with non-negative generalized Ricci curvature \cite{EMilman-RoleOfConvexity,EMilmanGeometricApproachPartI,EMilmanIsoperimetricBoundsOnManifolds}, it follows (cf. (\ref{eq:OneLip}) below) that
\[
\Var_{\mu}(f) \le  4 C \int \max_{i=1,\ldots,d} \abs{x_i}^2  d\mu \int \abs{\nabla f}^2 d\mu \;\;\; \forall f \in C^1(\Omega) ,
\]
for some universal numeric constant $C > 1$, a fact first established by Klartag in \cite{KlartagUnconditionalVariance}. 
\end{rem}

\subsection{Exponentially decaying product metrics}

We next specialize to the case $u_i(x_i) = \exp(\lambda_i x_i)$, which yields the following estimate for log-concave measures with super linear potentials. 

\begin{thm}  \label{exp-type}
Let $\mu = \exp(-V) dx$ denote a probability measure with smooth and positive density, supported in $\Omega$, an orthant unconditional convex subset of $(0,\infty)^d$. Assume that
\[
 D^2 V \geq 0 \; , \; V_{x_i} > \lambda_i > 0 \;\;\; \forall i=1,\ldots,d .
 \]
 Then the generalized Ricci tensor of the weighted manifold
\[
 \brac{\Omega  , g = \sum_{i=1}^d  \exp(-2 \lambda_i x_i)  (dx^i)^2 , \mu = \exp(-V) dx}
\]
satisfies
\[
\Ric_{g,\mu} \geq \mbox{\rm diag}\{\lambda_i (V_{x_i} - \lambda_i)\} ,
\]
and we have
\[
\Var_{\mu}(f) \le \int \sum_{i=1}^d \frac{f^2_{x_i}}{\lambda_i(V_{x_i}-\lambda_i)} d\mu \;\;\; \forall f \in C^1(\Omega). 
\]
In particular, if
\[
D^2 V \geq 0 \; , \; V_{x_i} \geq \lambda > 0 \;\;\; \forall i=1,\ldots,d ,
\]
we have
\[ \Var_{\mu}(f) \le \frac{4}{\lambda^2} \int  \abs{\nabla f}^2 d\mu \;\;\; \forall f \in C^1(\Omega). 
\] \end{thm}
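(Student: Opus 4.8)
The plan is to apply the Generalized Brascamp--Lieb Theorem~\ref{thm:BL} to the weighted manifold $(\Omega,g,\mu)$ with the exponentially decaying product metric $g = \sum_{i=1}^d \exp(-2\lambda_i x_i)(dx^i)^2$, exactly as in the proof of Theorem~\ref{thm:product-poly}. The first step is to verify the Geometric Convexity Assumptions: since $g$ is a product metric with $g_i(x_i) = \exp(-2\lambda_i x_i)$ satisfying $g_i' \le 0$ on $(0,\infty)$, Lemma~\ref{lem:orthant-convex} applies and gives that $(\intr(\Omega),g)$ is geodesically convex (and $\partial_r\Omega$ is $g$-locally-convex), so assumption~(2) of the Geometric Convexity Assumptions holds. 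One also notes that $([0,\infty),\exp(-2\lambda_i x_i)\,dx^2)$ has finite length near the boundary but infinite length at infinity, hence is isometric to a bounded or half-infinite interval of Euclidean space; this is exactly the same remark used in Theorem~\ref{thm:product-poly} and need not be belabored.

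The second step is the curvature computation. Here I would invoke Proposition~\ref{prop:product-Ric} with $\Phi_i''(x_i) = \exp(-2\lambda_i x_i)$, i.e. $u_i(x_i) = 1/\sqrt{\Phi_i''} = \exp(\lambda_i x_i)$. Then $u_i'/u_i = \lambda_i$ and $u_i''/u_i = \lambda_i^2$, so Proposition~\ref{prop:product-Ric} yields
\[
\Ric_{g,\mu} = D^2 V + \mathrm{diag}\set{\lambda_i V_{x_i} - \lambda_i^2} = D^2 V + \mathrm{diag}\set{\lambda_i(V_{x_i}-\lambda_i)}.
\]
Using the hypotheses $D^2 V \ge 0$ and $V_{x_i} > \lambda_i > 0$, the matrix $D^2 V$ is dropped (it only helps) and each diagonal entry $\lambda_i(V_{x_i}-\lambda_i)$ is strictly positive, giving $\Ric_{g,\mu} \ge \mathrm{diag}\set{\lambda_i(V_{x_i}-\lambda_i)} > 0$ as asserted.

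The third step is to feed this into Theorem~\ref{thm:BL} and translate back to Euclidean language. Theorem~\ref{thm:BL} gives $\Var_\mu(f) \le \int_\Omega \scalar{\Ric_{g,\mu}^{-1}\nabla f,\nabla f}\,d\mu$, and since $\Ric_{g,\mu} \ge \mathrm{diag}\set{\lambda_i(V_{x_i}-\lambda_i)}$ as a $2$-covariant tensor, monotonicity of inverses gives $\scalar{\Ric_{g,\mu}^{-1}\nabla f,\nabla f} \le \sum_{i=1}^d \frac{f_{x_i}^2}{\lambda_i(V_{x_i}-\lambda_i)}$, where I must be a little careful that the pairing $\scalar{\cdot,\cdot}$ here is the $g_0$-free covariant pairing of the covector $\nabla f = df$ with itself against the contravariant tensor $\Ric_{g,\mu}^{-1}$, so that the diagonal bound on $\Ric_{g,\mu}$ passes through cleanly without any metric factors (this is the notational point spelled out in Section~\ref{sec:pre}, and is the step most prone to a hidden slip). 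This gives the displayed weighted Poincar\'e inequality. For the final ``in particular'' statement, assume $V_{x_i} \ge \lambda > 0$ for all $i$ and optimize the choice $\lambda_i = \lambda/2$ (constant in $i$): then $\lambda_i(V_{x_i}-\lambda_i) \ge \frac{\lambda}{2}(\lambda - \frac{\lambda}{2}) = \frac{\lambda^2}{4}$, so each weight $\frac{1}{\lambda_i(V_{x_i}-\lambda_i)} \le \frac{4}{\lambda^2}$, and summing over $i$ yields $\Var_\mu(f) \le \frac{4}{\lambda^2}\int |\nabla f|^2\,d\mu$.

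There is no serious obstacle: everything reduces to plugging $u_i = \exp(\lambda_i x_i)$ into the already-proven Proposition~\ref{prop:product-Ric}, checking geodesic convexity via the already-proven Lemma~\ref{lem:orthant-convex}, and applying Theorem~\ref{thm:BL}. The only points requiring minor care are (a) the boundary-regularity remark for the metric that degenerates at $x_i=0$, handled verbatim as in Theorem~\ref{thm:product-poly}, and (b) ensuring the tensor inequality $\Ric_{g,\mu}\ge \mathrm{diag}\set{\lambda_i(V_{x_i}-\lambda_i)}$ is used with the correct (metric-independent) pairing when inverting, so that the final Euclidean-looking inequality is genuinely correct and not off by metric factors. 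The choice $\lambda_i=\lambda/2$ in the last step is the unique maximizer of $t\mapsto t(\lambda-t)$, which is why the constant $4/\lambda^2$ is the natural one to state.
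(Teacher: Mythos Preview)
Your proposal is correct and follows essentially the same route as the paper: compute $\Ric_{g,\mu}$ via Proposition~\ref{prop:product-Ric} with $u_i=\exp(\lambda_i x_i)$, invoke Lemma~\ref{lem:orthant-convex} for geodesic convexity of $(\intr(\Omega),g)$, apply Theorem~\ref{thm:BL}, and set $\lambda_i=\lambda/2$ for the final statement.

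One small factual slip: you write that $([0,\infty),\exp(-2\lambda_i x_i)\,dx^2)$ has ``infinite length at infinity'', but $\int_0^\infty e^{-\lambda_i x}\,dx = 1/\lambda_i < \infty$, so the total length is finite and the metric is \emph{not} complete (unlike the polynomial case $x^{-2p}$ with $p\in(0,1)$ treated in Theorem~\ref{thm:product-poly}). The paper in fact points this out explicitly. Fortunately this does not affect your argument, since you correctly rely on geodesic convexity of $\intr(\Omega)$ (Geometric Convexity Assumption~(2)) rather than completeness.
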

\begin{proof}
The lower bound on $\Ric_{g,\mu}$ follows immediately from Proposition \ref{prop:product-Ric}. 
Note that $(\Omega,g)$ has bounded diameter due to the integrability of the metric, and so for example $(\Real_+^d , g)$ is no longer complete as in the previous subsection. However, as explained there, $(\intr(\Omega),g)$ is geodesically convex, and so the Geometric Convexity Assumptions are still in force, and the assertion follows from the Brascamp--Lieb Theorem \ref{thm:BL}. The in particular part obviously follows by setting $\lambda_i \equiv \frac{\lambda}{2}$. 
\end{proof}

\subsection{Unconditional Convex Sets With Diagonal Boundary}

Let $\Omega \subset (0,\infty)^d$ denote a relatively closed bounded set with positive volume, and let $\lambda_\Omega$ denote the normalized Lebesgue probability measure on $\Omega$. Recall that the Poincar\'e constant of $\Omega$ is defined to be the best constant $C_P(\Omega)$ in the following inequality:
\[
\Var_{\lambda_\Omega}(f) \leq C_P (\Omega) \int \abs{\nabla f}^2 d\lambda_{\Omega} \;\;\; \forall f \in C^1(\Omega) .
\]
When $\Omega$ is convex, recall that its associated gauge (or norm) functional $p_\Omega : (0,\infty)^d \rightarrow \Real$ is defined as
\[
p_\Omega(x) := \min \{ t \ge 0 ; x \in t \Omega\} .
\]
The cone measure $\sigma_{\Omega}$ is the probability measure on $\partial_r \Omega := \partial \Omega \cap (0,\infty)^d$ defined as the push forward of $\lambda_\Omega$ under the mapping
\[
x \mapsto \frac{x}{p_{\Omega}(x)}  .
\]
Clearly, every probability measure on $(0,\infty)^d$ whose density is a function of $p_\Omega$ has $\sigma_\Omega$ as its image under the same mapping. It is immediate to verify that
\begin{equation} \label{eq:cone-formula}
d\sigma_{\Omega}(x) = \frac{1}{\text{Vol}(\Omega)} \frac{\scalar{x,n}}{d} d\mathcal{H}^{d-1}|_{\partial_r \Omega} (x) ,
\end{equation}
where $n$ denotes the outer unit normal to $\partial \Omega$.

\begin{thm} \label{l1-type}
Let $\Omega \subset (0,\infty)^d$ ($d \geq 3$) denote an orthant unconditional, relatively closed, bounded convex set, so that $\partial_r \Omega$ is smooth and on it
\[
\frac{\langle n, e_i \rangle}{\langle n, x \rangle} \geq \lambda > 0 \;\;\; \forall i =1 ,\ldots, d . 
\]
Then
\[
C_P(\Omega) \leq \frac{C}{d^2}  \Bigl( {\int_{\Omega} \abs{x}^2 d \lambda_\Omega} +  \frac{1}{\lambda^2} \int_{\partial_r \Omega}\frac{\abs{x}^2}{\langle x, n \rangle ^2} d \sigma_{\Omega} \Bigr),
 \]
where $C > 0$ is a numeric constant. In particular, if
\begin{equation} \label{eq:diagonal}
0 < \lambda \le \frac{\langle n, e_i \rangle}{\langle n, x \rangle} \le \Lambda \;\;\; \forall i =1 ,\ldots, d ,
\end{equation}
then
\[ 
C_P(\Omega) \leq C \frac{1+(d+2) \brac{\frac{\Lambda}{\lambda}}^2}{d^2}  \int_{\Omega} \abs{x}^2 d \lambda_{\Omega} .
\]
\end{thm}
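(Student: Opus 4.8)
The strategy is to reduce the statement about the uniform measure $\lambda_\Omega$ on the convex body $\Omega$ to one of the product-metric estimates already proved for \emph{log-concave} measures, specifically Theorem~\ref{exp-type} (or its polynomial analogue, Theorem~\ref{thm:product-poly}). The point is that $\lambda_\Omega$ is the pushforward, under the radial map $x\mapsto x/p_\Omega(x)\cdot(\text{something})$, of a suitable unconditional log-concave measure $\mu$ on $(0,\infty)^d$ whose density depends only on the gauge $p_\Omega$; concretely, one takes $\mu = \exp(-p_\Omega(x))\,dx$ (normalized), or more robustly $\mu_t = \exp(-t\,p_\Omega(x))\,dx$ and later optimizes/sends $t$ appropriately. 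Such $\mu$ is log-concave (since $p_\Omega$ is convex), unconditional when restricted to the orthant, and one computes $V_{x_i} = t\,\partial_{x_i}p_\Omega(x) \geq 0$, with a quantitative lower bound coming precisely from the hypothesis $\langle n,e_i\rangle/\langle n,x\rangle \geq \lambda$ on $\partial_r\Omega$ together with the $1$-homogeneity of $p_\Omega$ (Euler's relation $\langle \nabla p_\Omega, x\rangle = p_\Omega$, so that $\partial_{x_i} p_\Omega$ equals $\langle n, e_i\rangle/\langle n, x\rangle$ evaluated at the boundary point $x/p_\Omega(x)$, hence $\geq \lambda$ everywhere by homogeneity of $\partial_{x_i}p_\Omega$ of degree $0$).

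First I would make the change of variables precise: write any $f\in C^1(\Omega)$, extend it $0$-homogeneously (or via $f(x/p_\Omega(x))$) to $(0,\infty)^d$, apply the weighted Poincaré inequality of Theorem~\ref{exp-type} to this extension with respect to $\mu_t$, and then disintegrate $\mu_t$ along the rays through the origin: $\mu_t$ decomposes as the cone measure $\sigma_\Omega$ on $\partial_r\Omega$ times a one-dimensional radial law. The radial integration contributes explicit constants (Gamma factors in $d$), while the cone-measure part reproduces $\Var_{\lambda_\Omega}(f)$ up to the universal constant coming from the concentration-to-isoperimetry equivalence invoked in the introduction for non-negatively curved weighted manifolds — this is why the statement allows a universal $C$ rather than a sharp constant. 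The right-hand side $\int \sum_i x_i^2 f_{x_i}^2\,d\mu_t$, after the same disintegration and using the cone-measure formula~(\ref{eq:cone-formula}), splits into a bulk term controlled by $\int_\Omega |x|^2\,d\lambda_\Omega$ and a boundary term controlled by $\lambda^{-2}\int_{\partial_r\Omega}|x|^2/\langle x,n\rangle^2\,d\sigma_\Omega$; the $d^{-2}$ factor is the Gamma-function gain from integrating $x_i^2$ against the radial exponential profile in dimension $d$. Finally, under the two-sided bound~(\ref{eq:diagonal}), the boundary term is itself dominated by the bulk term: using~(\ref{eq:cone-formula}) to rewrite $\int_{\partial_r\Omega}|x|^2/\langle x,n\rangle^2\,d\sigma_\Omega$ and the upper bound $\langle n,e_i\rangle \leq \Lambda\langle n,x\rangle$ to compare $|x|^2 = \sum x_i^2$ against $\langle x,n\rangle$ times a linear functional, one trades the surface integral for a volume integral of $|x|^2$, picking up the factor $(d+2)(\Lambda/\lambda)^2$ from the dimension count in the radial direction. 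Combining gives the stated clean bound.

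**Main obstacle.** The delicate point is the passage between the log-concave measure $\mu_t$ and the uniform measure $\lambda_\Omega$: a verbatim pushforward does \emph{not} turn a Poincaré inequality for $\mu_t$ into one for $\lambda_\Omega$, because the extension $x\mapsto f(x/p_\Omega(x))$ is annihilated by the radial derivative, so one must be careful that the gradient terms $\sum_i x_i^2 f_{x_i}^2$ really see only the tangential part of $\nabla f$ and that the disintegration is clean. The honest route is the one already flagged in the introduction: verify $\Ric_{g,\mu_t}\geq 0$ (or $>0$) for the product metric, deduce the \emph{isoperimetric} (not merely spectral) consequence via \cite{EMilman-RoleOfConvexity,EMilmanGeometricApproachPartI,EMilmanIsoperimetricBoundsOnManifolds}, and push forward the isoperimetric profile — concentration is stable under Lipschitz pushforwards in a way that the raw Poincaré constant is not. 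Managing the Gamma-factor bookkeeping in $d$ so that the final constants come out as $C/d^2$ and $C(1+(d+2)(\Lambda/\lambda)^2)/d^2$, and handling the mild non-smoothness of $\partial\Omega$ where it meets the coordinate hyperplanes (as in the proof of Theorem~\ref{thm:product-poly}), are the remaining technical chores.
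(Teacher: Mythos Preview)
Your approach captures one of the two key ingredients correctly but is missing the other, and as a result the argument as stated does not go through.

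The part you have right is the bound on the \emph{cone-measure} variance $\Var_{\sigma_\Omega}(h)$ for $1$-Lipschitz $h$ on $\partial_r\Omega$. Indeed, taking $\mu = \tfrac{1}{Z}\exp(-p_\Omega(x))\,dx$, the hypothesis $\langle n,e_i\rangle/\langle n,x\rangle\geq\lambda$ together with homogeneity gives $\partial_{x_i}p_\Omega\geq\lambda$ everywhere, so Theorem~\ref{exp-type} yields $\Var_\mu(f)\leq\tfrac{4}{\lambda^2}\int|\nabla f|^2\,d\mu$. Applying this to the $0$-homogeneous extension $f(x)=h(x/p_\Omega(x))$ and disintegrating radially gives precisely
\[
\Var_{\sigma_\Omega}(h)\leq \frac{4}{\lambda^2(d-1)(d-2)}\int_{\partial_r\Omega}\frac{|x|^2}{\langle x,n\rangle^2}\,d\sigma_\Omega ,
\]
which is the paper's Proposition~\ref{cone-comparis}.

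The gap is your claim that ``the cone-measure part reproduces $\Var_{\lambda_\Omega}(f)$''. It does not: the $0$-homogeneous extension forgets all radial information, so on the left you obtain $\Var_{\sigma_\Omega}(f|_{\partial_r\Omega})$, not $\Var_{\lambda_\Omega}(f)$. These are genuinely different---take $f(x)=p_\Omega(x)$, which is constant on $\partial_r\Omega$ but has nonzero variance on $\Omega$. Your proposed fix via pushforward of isoperimetric profiles does not help here, since the map $T(x)=x/p_\Omega(x)$ lands in $\partial_r\Omega$, not in $\Omega$; there is no natural Lipschitz map from $((0,\infty)^d,\mu)$ onto $(\Omega,\lambda_\Omega)$ that would transfer a Poincar\'e inequality.

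What is missing is an independent \emph{bulk--boundary decomposition} of $\Var_{\lambda_\Omega}(f)$. The paper supplies this via a Hardy-type estimate from \cite{KolesnikovEMilman-HardyKLS}:
\[
\int_\Omega f^2\,d\lambda_\Omega \leq \frac{4}{d^2}\int_\Omega\langle x,\nabla f\rangle^2\,d\lambda_\Omega + \frac{2}{d\,\mathrm{Vol}(\Omega)}\int_{\partial\Omega}\langle x,n\rangle f^2\,d\mathcal{H}^{d-1},
\]
which for $1$-Lipschitz $f$ gives $\Var_{\lambda_\Omega}(f)\leq\tfrac{4}{d^2}\int_\Omega|x|^2\,d\lambda_\Omega+2\Var_{\sigma_\Omega}(f)$. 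Only then does one invoke the cone-measure bound above and the equivalence $C_P(\Omega)\simeq\sup_{f\text{ 1-Lip}}\Var_{\lambda_\Omega}(f)$ from \cite{EMilman-RoleOfConvexity}. Your sketch of the ``in particular'' part is in the right spirit; the paper uses the identity $\sum_i\langle n,e_i\rangle^2=1$ to get $\langle x,n\rangle^{-2}\leq d\Lambda^2$, together with $\int_{\partial_r\Omega}|x|^2\,d\sigma_\Omega=(1+\tfrac{2}{d})\int_\Omega|x|^2\,d\lambda_\Omega$.
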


\begin{rem} 
The condition (\ref{eq:diagonal}) reflects the property that in some sense, $\Omega$ is close to an orthant of an $\ell_1^d$-ball, whose facet is normal to the diagonal direction. Note that applying Theorem \ref{l1-type} to $\Omega_1^d =  \{x \in \Real^d \; ; \; x_i \ge 0 \; ,\; \sum_{i=1}^d x_i \leq 1\}$, recovers (up to numeric constants) its correct Poincar\'e constant (see \cite{SodinLpIsoperimetry}). Indeed, in this case (\ref{eq:diagonal}) holds with $\lambda=\Lambda=1$, and Theorem \ref{l1-type} yields
\[
C_P(\Omega_1^d) \leq C \frac{d+3}{d^2} \int_{\Omega_1^d} \abs{x}^2 d \lambda_{\Omega_1^d} = C \frac{d+3}{d} \int_{\Omega} x^2_1 d \lambda_{\Omega_1^d},
\]
which is of the right order in $d$ (as follows e.g. from the next subsection and testing the linear function $f(x) = x_1$). 
\end{rem}

For the proof, we will require the following:

\begin{prop} \label{cone-comparis}
Under the same assumptions as in Theorem \ref{l1-type}, we have for every $1$-Lipschitz function $h$ on $\partial_r \Omega$,
\[
\Var_{\sigma_\Omega}(h) \le \frac{4}{\lambda^2 (d-1)(d-2)} \int_{\partial_r \Omega}\frac{\abs{x}^2}{\langle x, n \rangle ^2} d \sigma_\Omega .
\]
\end{prop}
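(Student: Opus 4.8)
The plan is to lift $h$ from the hypersurface $\partial_r\Omega$ to a $0$-homogeneous function on the cone $(0,\infty)^d$, put on that cone a probability measure which pushes forward to $\sigma_\Omega$ under the radial projection \emph{and} enjoys the dimension-free Poincar\'e inequality of Theorem \ref{exp-type}, and then exploit the product structure of this measure in polar coordinates. Concretely, I would first reduce by the usual approximation to the case where $\Omega$ is smooth and strictly convex (so that the gauge $p_\Omega$ is smooth on $(0,\infty)^d$) and $h$ is smooth. Next, introduce the probability measure $\nu$ on $(0,\infty)^d$ with density proportional to $\exp(-p_\Omega(x))$. Writing $x = s y$ with $s = p_\Omega(x)\in(0,\infty)$ and $y\in\partial_r\Omega$, the change of variables $dx = s^{d-1}\scalar{y,n(y)}\,ds\,d\mathcal{H}^{d-1}(y)$ together with (\ref{eq:cone-formula}) shows that, in these coordinates, $\nu = \gamma_d\otimes\sigma_\Omega$, where $\gamma_d$ is the $\mathrm{Gamma}(d,1)$ law on $(0,\infty)$; in particular $\nu$ pushes forward to $\sigma_\Omega$ under $\pi:x\mapsto x/p_\Omega(x)$.

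The key input is that $\nu$ satisfies a dimension-free Poincar\'e inequality. Indeed $\nu = \exp(-V)\,dx$ with $V = p_\Omega$ convex, and since $p_\Omega$ is $1$-homogeneous its gradient is $0$-homogeneous and, by Euler's identity, $\partial_{x_i}p_\Omega(y) = \scalar{n(y),e_i}/\scalar{n(y),y}\ge\lambda$ on $\partial_r\Omega$, hence $V_{x_i} = \partial_{x_i}p_\Omega\ge\lambda$ everywhere on $(0,\infty)^d$. Applying Theorem \ref{exp-type} to $\nu$ on $(0,\infty)^d$ (which is orthant unconditional and for which the Geometric Convexity Assumptions hold, the exponential product metric being of bounded diameter) gives $\Var_\nu(f)\le\frac{4}{\lambda^2}\int\abs{\nabla f}^2\,d\nu$ for all $f\in C^1$. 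Now take $f = \tilde h := h\circ\pi$, the $0$-homogeneous extension of $h$. Since $\tilde h$ is $\pi$-measurable and $\pi_*\nu = \sigma_\Omega$, we have $\Var_\nu(\tilde h) = \Var_{\sigma_\Omega}(h)$. Because $\tilde h$ is $0$-homogeneous, $\nabla\tilde h(x) = p_\Omega(x)^{-1}\nabla\tilde h(\pi(x))$ and $\nabla\tilde h\perp x$, so the integrand $\abs{\nabla\tilde h(x)}^2 = s^{-2}\abs{\nabla\tilde h(y)}^2$ is a product of a function of $s$ and a function of $y$; the product structure $\nu = \gamma_d\otimes\sigma_\Omega$ then yields $\int\abs{\nabla\tilde h}^2\,d\nu = \E_{\gamma_d}[s^{-2}]\int_{\partial_r\Omega}\abs{\nabla\tilde h}^2\,d\sigma_\Omega = \frac{1}{(d-1)(d-2)}\int_{\partial_r\Omega}\abs{\nabla\tilde h}^2\,d\sigma_\Omega$, using $\E_{\gamma_d}[s^{-2}] = \Gamma(d-2)/\Gamma(d)$, which is finite since $d\ge 3$.

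It remains to bound $\abs{\nabla\tilde h(y)}$ for $y\in\partial_r\Omega$. Decomposing $\nabla\tilde h(y)$ into its component in $T_y\partial_r\Omega$ — which is the intrinsic gradient $\nabla_{\partial_r\Omega}h(y)$, of norm at most $1$ as $h$ is $1$-Lipschitz — and its normal component, the latter being pinned down by the orthogonality relation $\scalar{\nabla\tilde h(y),y} = 0$, a one-line computation using $\abs{y}^2 = \abs{y_T}^2 + \scalar{y,n(y)}^2$ gives $\abs{\nabla\tilde h(y)}^2\le\abs{\nabla_{\partial_r\Omega}h(y)}^2\,\abs{y}^2/\scalar{y,n(y)}^2\le\abs{y}^2/\scalar{y,n(y)}^2$. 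Combining the three displays proves the proposition. The point requiring care is the initial reduction: one must approximate $\Omega$ by smooth strictly convex orthant unconditional bodies satisfying the hypothesis with $\lambda$ replaced by $\lambda-\eps$, approximate $h$ by smooth (nearly) $1$-Lipschitz functions, and pass to the limit on both sides — if the right-hand side is $+\infty$ there is nothing to prove, and otherwise one checks convergence. The remaining ingredients — the polar identity $\nu = \gamma_d\otimes\sigma_\Omega$, the formula $\nabla p_\Omega(y) = n(y)/\scalar{y,n(y)}$, and the elementary normal/tangential decomposition — are routine.
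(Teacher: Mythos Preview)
Your proof is correct and follows essentially the same route as the paper: introduce the measure $\nu \propto \exp(-p_\Omega)\,dx$ on $(0,\infty)^d$, verify $\partial_{x_i}p_\Omega\ge\lambda$ via homogeneity and the boundary hypothesis, invoke Theorem \ref{exp-type} to get $\Var_\nu(f)\le\frac{4}{\lambda^2}\int|\nabla f|^2\,d\nu$, apply this to $f=h\circ\pi$, and exploit the product structure $\nu=\gamma_d\otimes\sigma_\Omega$ to extract the $\frac{1}{(d-1)(d-2)}$ factor. The only noticeable difference is that the paper bounds $|\nabla(h\circ\pi)|$ by citing an external lemma for the operator norm $\|d\pi(x)\|_{op}=\frac{|y|}{t\scalar{y,n(y)}}$, whereas you obtain the same pointwise bound directly via the tangential/normal decomposition of $\nabla\tilde h$ and the constraint $\scalar{\nabla\tilde h,y}=0$; your argument is thus slightly more self-contained. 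The approximation step you add at the start is not present in the paper (the hypotheses already give $\partial_r\Omega$ smooth), but it does no harm.
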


\begin{proof}
Consider the probability measure $\mu = \frac{1}{Z} \exp(-p(x)) dx$ on $(0,\infty)^d$ for $p = p_\Omega$. By homogeneity $\langle x , \nabla p(x) \rangle = p(x)$, and as $n = \frac{\nabla p}{\abs{\nabla p}}$, we have for $x \in \partial_r \Omega$,
$$
\partial_{x_i} p = \langle n, e_i \rangle |\nabla p| \ge \lambda \langle n, x \rangle |\nabla p|
= \lambda \langle \nabla p,x \rangle = \lambda p(x) =\lambda.
$$
By homogeneity, we conclude that $\partial_{x_i}  p \ge \lambda$ for all $x \in (0,\infty)^d$ and $i=1,\ldots,d$. Invoking Theorem \ref{exp-type}, we deduce that $\mu$ satisfies
$$
\Var_{\mu} (f) \le \frac{4}{\lambda^2} \int |\nabla f|^2 \ d \mu ,
$$
for all $f \in C^1((0,\infty)^d)$. 

Now given a $1$-Lipschitz function $h$ on $\partial_r \Omega$, let us apply the above inequality to $f(x) := h(x/p(x))$. We obtain
$$
\Var_{\sigma_\Omega}(h)  = \Var_{\mu}(f)  \le \frac{4}{\lambda^2} \int \| dT(x) \|^2_{op} |\nabla h(T x)|^2 \ d \mu \leq \frac{4}{\lambda^2} \int \| dT(x)\|^2_{op} \ d \mu ,
$$
where $T(x) = x/p(x)$ and $\| \cdot \|_{op}$ is the corresponding operator norm. To estimate the latter integral, let us represent $\mu$ as the product of probability measures
 $$
 d\mu(x) = \frac{1}{(d-1)!} t^{d-1} e^{-t} dt \otimes d\sigma_\Omega(y) ,
 $$
 where $(t,y) = (p(x),x / p(x)) \in (0,\infty) \times \partial_r \Omega $ are adapted ``polar coordinates". By \cite[Lemma 15]{KolesnikovEMilman-HardyKLS}, we know that
\[
\|dT(x)\|_{op} =\frac{|x|}{p(x) \langle x, n(T(x)) \rangle} = \frac{\abs{y}}{t \scalar{y,n(y)}} ,
\]
and therefore
\[
 \int \| dT(x) \|^2_{op} \ d \mu
 = \frac{1}{(d-1)!} \int_0^{\infty} t^{d-3} e^{-t} dt \; \cdot \int_{\partial_r \Omega} \frac{\abs{y}^2}{\scalar{y,n}^2} d\sigma_{\Omega} ,
 \]
 thereby concluding the proof. 
\end{proof}

\begin{proof}[Proof of Theorem \ref{l1-type}]
By a Hardy-type estimate from \cite[Theorem 1]{KolesnikovEMilman-HardyKLS}, we know that for all $f \in C^1(\Omega)$,
\[
\int_{\Omega} f^2 d\lambda_\Omega \le \frac{4}{d^2} \int_{\Omega} \scalar{ x, \nabla f }^2  d\lambda_\Omega
+ \frac{2}{d} \frac{1}{\text{Vol}(\Omega) }\int_{\partial \Omega} \scalar{ x, n}  f^2 d \mathcal{H}^{d-1}(x) .
\]
Since $\scalar{x,n} = 0$ on $\partial \Omega \setminus \partial_r \Omega$, and using (\ref{eq:cone-formula}), we see that for any $1$-Lipschitz function $f$ on $\Omega$ (and in particular, on $\partial_r \Omega$),
\[
\Var_{\lambda_\Omega}(f) \leq \frac{4}{d^2} \int \abs{x}^2 d\lambda_\Omega + 2 \Var_{\sigma_\Omega}(f) . 
\]
The first assertion of Theorem \ref{l1-type} then follows by evaluating the last integral above using Proposition \ref{cone-comparis}, and using the fact \cite{EMilman-RoleOfConvexity} that for convex domains $\Omega$,
\begin{equation} \label{eq:OneLip}
C_P(\Omega) \leq C' \sup \set{ \Var_{\lambda_\Omega}(f) \; ; \; \text{$f$ is $1$-Lipschitz} } 
\end{equation}
for some numeric constant $C' > 1$ (cf. \cite[Corollary 8]{KolesnikovEMilman-HardyKLS}). 

To verify the second assertion, we simply use the trivial estimate
\[
\frac{1}{\langle x , n \rangle^2} = \sum_{i=1}^d \frac{\langle e_i , n \rangle^2}{\langle x , n \rangle^2} \le d \cdot \Lambda^2 .
\]
It remains to note that (e.g. integrating in polar-coordinates)
\[
\int_{\partial_r \Omega} \abs{x}^2 d \sigma_{\Omega} = \int_{\Omega} \frac{\abs{x}^2}{p_{\Omega}^2(x)} d\lambda_{\Omega} = \brac{1 + \frac{2}{d}} \int_{\Omega} \abs{x}^2 d\lambda_{\Omega} .
\]
The second assertion of Theorem \ref{l1-type} now readily follows. 
\end{proof}

\subsection{From Orthant to Unconditional log-concave measures} \label{subsec:orth-to-full}

In \cite{KlartagMomentMap}, B. Klartag showed how to transfer weighted Poincar\'e inequalities from orthant unconditional convex sets and log-concave measures, to unconditional ones defined on the entire space. We summarize his result as follows:

\begin{thm}[Klartag]
Let $\mu$ denote a log-concave unconditional probability measure on $\Real^d$, and denote by $\mu_+$ its conditioning onto $\Real^d_+$. Assume that
\begin{equation} \label{eq:orthant-var}
\Var_{\mu_+}(f) \leq \int Q_x(\nabla f) d\mu_+(x) \;\;\; \forall f \in C^1(\Real^d_+) .
\end{equation}
for some measurable quadratic form $x \mapsto Q_x$. Then
\[
\Var_{\mu}(f) \leq \int Q_x(\nabla f) d\mu(x) + \max_{i=1,\ldots,d} \int x_i^2 d\mu(x)  \int \abs{\nabla f}^2 d\mu \;\;\; \forall f \in C^1(\Real^d) .
\]
\end{thm}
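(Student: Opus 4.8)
The plan is to decompose an arbitrary $f \in C^1(\Real^d)$ into its unconditional (even) part and the remaining parts that are odd in at least one coordinate, and to handle these pieces separately using the orthant hypothesis \eqref{eq:orthant-var} together with a one-dimensional Poincar\'e-type inequality in each ``odd'' direction. Concretely, for a subset $S \subseteq \{1,\ldots,d\}$ write $f_S$ for the component of $f$ in the Fourier--Walsh-type decomposition with respect to the reflection group $\{\pm 1\}^d$ acting on $\Real^d$, i.e. $f_S(x)$ is odd in $x_i$ for $i \in S$ and even in $x_i$ for $i \notin S$, and $f = \sum_S f_S$. Since $\mu$ is unconditional, these components are mutually orthogonal in $L^2(\mu)$ and $\int f_S \, d\mu = 0$ for $S \neq \emptyset$, so $\Var_\mu(f) = \Var_\mu(f_\emptyset) + \sum_{S \neq \emptyset} \int f_S^2 \, d\mu$. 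The same orthogonality, applied to the partial derivatives, will let us reassemble the right-hand side at the end.

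First I would treat the even part $f_\emptyset$: its restriction to $\Real^d_+$ determines it everywhere by reflection, $\mu_+$ is exactly the conditioning of $\mu$ onto the orthant, and $\Var_{\mu}(f_\emptyset) = \Var_{\mu_+}(f_\emptyset)$, so \eqref{eq:orthant-var} gives $\Var_\mu(f_\emptyset) \le \int Q_x(\nabla f_\emptyset)\,d\mu_+ = \int Q_x(\nabla f_\emptyset)\,d\mu$, using unconditionality of $Q$ (which we may assume, or symmetrize). Next, for $S \neq \emptyset$ pick some $i \in S$; since $f_S$ is odd in $x_i$, for $\mu$-a.e.\ fixing of the other coordinates the function $t \mapsto f_S(\ldots,t,\ldots)$ is odd on $\Real$, hence has zero mean against the (symmetric, log-concave) conditional distribution of $x_i$, and a one-dimensional weighted Poincar\'e inequality bounds $\int f_S^2$ by a multiple of $\int x_i^2 (f_S)_{x_i}^2$ along that fiber. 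Here one uses the standard fact that for a symmetric log-concave density on $\Real$ and an odd function $h$, $\int h^2 \le \int t^2 (h')^2$ (this is the odd-sector spectral gap, cf.\ the weight $x_i^2$ appearing in Theorem \ref{thm:product-poly}(2) and Klartag's thin-shell argument). Integrating over the remaining coordinates yields $\int f_S^2 \, d\mu \le \int x_i^2 (f_S)_{x_i}^2 \, d\mu \le \int x_i^2 |\nabla f_S|^2 \, d\mu$.

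Summing over $S \ni i$ with a fixed choice of $i = i(S)$ and then over all $i$, one gets $\sum_{S \neq \emptyset} \int f_S^2\,d\mu \le \sum_{i=1}^d \int x_i^2 \sum_{S : i(S) = i} |\nabla f_S|^2 \, d\mu \le \max_i \int x_i^2\,d\mu \cdot \sum_{i} \int |\nabla f_S|^2$-type expression; by orthogonality of the $\nabla f_S$ in $L^2(\mu)$ (again unconditionality) this is at most $\max_{i} \int x_i^2 \, d\mu \int |\nabla f|^2 \, d\mu$. Combining with the even-part bound and observing $\sum_{S} \int Q_x(\nabla f_S)\,d\mu$ can only be controlled nicely for $S = \emptyset$, one must be slightly careful: one routes the even part through $Q$ and \emph{all} nonzero sectors through the crude $x_i^2$ weight, which is exactly the shape of the claimed inequality. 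The main obstacle I anticipate is the bookkeeping needed to ensure the cross-terms vanish and that assigning each $S$ a single coordinate $i(S)$ does not lose a factor growing with $d$ — this is handled precisely by the $L^2(\mu)$-orthogonality of the Walsh components and of their gradients, which hinges on the product structure of the reflection symmetry, plus the elementary one-dimensional odd-function Poincar\'e inequality with weight $t^2$.
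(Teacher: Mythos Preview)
Your Walsh-decomposition approach is a reasonable idea and differs from the paper's route, but as written it has a genuine gap at the summation step. After you establish the fiberwise bound $\int f_S^2\,d\mu \le C\int x_i^2\,(\partial_i f_S)^2\,d\mu$ for some $i=i(S)\in S$, you need to pass from $\sum_{S\neq\emptyset}\int x_{i(S)}^2\,(\partial_{i(S)} f_S)^2\,d\mu$ to $\bigl(\max_i \int x_i^2\,d\mu\bigr)\int|\nabla f|^2\,d\mu$. This would require an inequality of the type $\int x_i^2 g^2\,d\mu\le \bigl(\int x_i^2\,d\mu\bigr)\bigl(\int g^2\,d\mu\bigr)$, which is false in general (it goes the wrong way relative to Cauchy--Schwarz). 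The $L^2(\mu)$-orthogonality of the Walsh components and of their partial derivatives, which you correctly note, gives $\sum_S\int|\nabla f_S|^2\,d\mu=\int|\nabla f|^2\,d\mu$, but it cannot strip the weight $x_i^2$ from inside the integral. Replacing your weighted one-dimensional inequality by a product-form one, $\int h^2\,d\eta\le C\,\Var(\eta)\int(h')^2\,d\eta$, does not help either: applied fiberwise it produces the \emph{conditional} second moment $\int x_i^2\,d\mu(\cdot\,|\,x_{-i})$, which depends on $x_{-i}$ and cannot in general be replaced by the unconditional $\int x_i^2\,d\mu$.

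The paper avoids this by a different decomposition. It conditions on the $2^d$ orthants and writes $\Var_\mu(f)=\E_\nu\Var_{\mu_\alpha}(f)+\Var_\nu\bigl(\E_{\mu_\alpha}f\bigr)$, where $\nu$ is uniform on $\{-1,1\}^d$. The first (within-orthant) term is controlled by the hypothesis \eqref{eq:orthant-var} applied to the full $f$, not just to $f_\emptyset$. The second (between-orthant) term is a variance on the discrete cube, handled by the cube Poincar\'e inequality; the resulting discrete derivatives $\E_{\mu_\alpha}f-\E_{\mu_{T_i\alpha}}f$ are \emph{averages}, so Fubini together with the one-dimensional lemma $|\int h(t)\,d\eta-\int h(-t)\,d\eta|\le 2\sqrt{\int t^2\,d\eta}\,\sqrt{\int(h')^2\,d\eta}$ (valid for any even unimodal $\eta$) and Cauchy--Schwarz over the remaining coordinates produce the product $\int x_i^2\,d\mu\cdot\int(\partial_i f)^2\,d\mu$ directly. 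The key point is that working with orthant averages, rather than with the full $L^2$-norms $\int f_S^2\,d\mu$, is what makes the product form emerge without any illegitimate decoupling of $x_i^2$ from the gradient.
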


For completeness, let us sketch a somewhat simplified proof of Klartag's result, which avoids the use of the dual $H^{-1}$-norm or Fourier arguments as in \cite{KlartagMomentMap}. 

\begin{proof}[Proof Sketch]
Write $\mu = \int \mu_\alpha d\nu(\alpha)$, where $\nu$ denotes the uniform measure on the discrete cube $\set{-1,1}^d$ (which we naturally identify with the $2^d$ orthants), and $\mu_\alpha$ denotes the measure $\mu$ conditioned on the $\alpha$-th orthant. Clearly,
\[
\Var_\mu(f) = \E_{\nu} \Var_{\mu_\alpha}(f)  + \Var_{\nu} \E_{\mu_\alpha}(f) . 
\]
The first term is governed by our assumption (\ref{eq:orthant-var}), whereas for the second term we employ the Poincar\'e inequality on the discrete cube:
\[
\Var_{\nu} (h) \leq \frac{1}{4} \int \sum_{i=1}^d \abs{D_i h}^2 d\nu ,
\]
where $D_i h(\alpha) = h(\alpha) - h(T_i \alpha)$ denotes the discrete partial derivative in the $i$-th direction (and $T_i$ denotes the corresponding reflection). It remains to establish that
\[
\int \abs{\E_{\mu_{\alpha}}(f) - \E_{\mu_{T_i \alpha}}(f)}^2 d\nu(\alpha) \leq 4 \int x_i^2 d\mu(x) \int \abs{\partial_{x_i} f}^2 d\mu .
\]
Since the density of $\mu$ remains unconditional and log-concave when restricted to any line in the coordinate directions, by an application of Fubini and Cauchy--Schwarz, the proof is reduced to establishing the following one-dimensional inequality for any log-concave even measure $\eta$ on $\Real$:
\[
\abs{ \int f(x_i) d\eta(x_i) - \int f(-x_i) d\eta(x_i) } \leq 2 \sqrt{ \int x_i^2 d\eta(x_i) } \sqrt{ \int \abs{f'}^2 d\eta} \;\;\; \forall f \in C^1(\Real) .
\]
Using only the unimodality of $\eta$, integration by parts and Cauchy--Schwartz, verification of the latter is elementary (see \cite[Lemma 5.1]{KlartagMomentMap}). 
\end{proof}

Unfortunately, 
the above argument does not work for transferring a weighted log-Sobolev inequality from an orthant to the entire space (although various inequalities, not as elegant as the one for weighted Poincar\'e, may be derived).

\section{Conformal Metrics} \label{sec:conformal}

In this section, we no longer restrict our discussion to Hessian metrics, and consider metrics which are conformal to the standard Euclidean one.  Conformal changes of metric are very important transformations in geometry and mathematical physics, and we aim to demonstrate their usefulness in our context as well.

\subsection{Brascamp--Lieb Inequality for finite Generalized Dimension}

For the results of this section, we will require the following additional notions. Recall that we are working on a weighted manifold $(M^d,g,\mu = \exp(-V) \vol_g)$.  

The generalized weighted $N$-dimensional Ricci tensor, $N \in (-\infty,\infty]$, originally introduced by Bakry \cite{BakryStFlour} to obtain a tensorial equivalent formulation of the Bakry--\'Emery Curvature-Dimension condition \cite{BakryEmery}, is defined as follows:
\[
\Ric_{g,\mu,N} := \Ric_{g,\mu} - \frac{1}{N - d} \nabla_g V \otimes \nabla_g V . 
\]
Given a smooth hypersurface $S \subset M$ with unit normal $n$, its generalized weighted mean-curvature is defined as
\[
H_{g,\mu} := \text{tr}_g(\II_{S}) - \scalar{\nabla V,n} ,
\]
where $\II_S$ denotes the second fundamental form of $S$ with respect to $n$. Finally, the weighted volume measure on $S$ is given by
\[
\mu_{S,g} := \exp(-V) \vol_{S,g} ,
\]
where $\vol_{S,g}$ denotes the Riemannian volume measure on $S$ with respect to the metric induced from $g$. 

\medskip

The results of this section are based on the following extension of the Brascamp-Lieb inequality in the Riemannian setting with boundary, obtained by the authors in \cite{KolesnikovEMilman-Reilly}.

\begin{thm}[Generalized Dimensional Brascamp--Lieb With Boundary] \label{BL-Dir} \hfill \\
Let $(M^d,g,\mu = \exp(-V) \vol_g)$ denote a weighted manifold which is assumed oriented and compact. Let $\frac{1}{N} \in (-\infty, \frac{1}{d}]$, and assume that $\Ric_{g,\mu,N} > 0$ on $M$. Then for any $f \in C^1(M)$:
\begin{enumerate}
\item When $M$ is strictly (generalized) mean-convex: \\
If $H_{g,\mu} > 0$ on $\partial M$, then
\[ 
\frac{N}{N-1} \Var_\mu(f) \leq \int_M \langle{ \Ric_{g,\mu,N}^{-1} \; \nabla f , \nabla f} \rangle
d\mu + \min_{C \in \Real} \int_{\partial M} \frac{1}{H_{g,\mu}} (f - C)^2 d\mu_{\partial M,g} .
\]
\item
When $M$ is locally convex: \\
If $\II_{\partial M} \geq 0$ on $\partial M$, then
\[
\frac{N}{N-1} \Var_\mu(f) \leq \int_M \langle{ \Ric_{g,\mu,N}^{-1} \; \nabla f , \nabla f} \rangle
d\mu.
\]
\item 
When $M$ is (generalized) mean-convex and under Dirichlet boundary conditions:\\
If $H_{g,\mu} \geq 0$ on $\partial M$ and $f|_{\partial M} \equiv 0$, then
\[
\frac{N}{N-1} \int_M f^2 d\mu \leq \int_M \langle{ \Ric_{g,\mu,N}^{-1} \; \nabla f , \nabla f} \rangle d\mu . 
\]
\end{enumerate}
\end{thm}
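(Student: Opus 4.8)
The plan is to establish and then exploit a weighted generalization of the classical Reilly formula. Write $L_\mu u := \Delta_g u - \scalar{\nabla_g V,\nabla_g u}$ for the weighted Laplacian; it is symmetric on $L^2(\mu)$ and obeys the weighted Green identities $\int_M f\, L_\mu u\, d\mu = -\int_M\scalar{\nabla f,\nabla_g u}\, d\mu + \int_{\partial M} f\,\partial_n u\, d\mu_{\partial M,g}$ and $\int_M L_\mu u\, d\mu = \int_{\partial M}\partial_n u\, d\mu_{\partial M,g}$. Starting from the Bochner--Lichnerowicz--Weitzenb\"ock identity $\tfrac12\Delta_g\abs{\nabla_g u}^2 = \abs{\Hess_g u}^2 + \scalar{\nabla_g u,\nabla_g\Delta_g u} + \Ric_g(\nabla_g u,\nabla_g u)$, integrating it against $\mu$, using the weighted divergence theorem twice, and decomposing the boundary integrand into tangential and normal parts (via $\Hess_g u(n,X) = \scalar{\nabla_\partial(\partial_n u),X} - \II_{\partial M}(\nabla_\partial u,X)$ for tangential $X$, together with the standard relation between $\Delta_g u$ and the induced boundary Laplacian), one is led to
\[
\int_M (L_\mu u)^2\, d\mu = \int_M\abs{\Hess_g u}^2\, d\mu + \int_M\Ric_{g,\mu}(\nabla_g u,\nabla_g u)\, d\mu + \int_{\partial M}\Bigl[ H_{g,\mu}(\partial_n u)^2 + \II_{\partial M}(\nabla_\partial u,\nabla_\partial u) - 2\scalar{\nabla_\partial(\partial_n u),\nabla_\partial u}\Bigr]\, d\mu_{\partial M,g},
\]
valid for $u\in C^\infty(M)$, and --- after solving the boundary value problems below and invoking elliptic regularity up to the boundary plus an approximation argument --- for the relevant $u$ when $f\in C^1(M)$. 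Deriving this identity with the precise boundary terms, and justifying its use for merely $C^1$ data, is the backbone of the argument; it is here that the orientability, compactness and smoothness-up-to-the-boundary hypotheses are consumed (through solvability and boundary elliptic regularity for $L_\mu$).

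Next, for a given $f$ I would pass to a ``dual'' formulation via a potential $u$. In case (2) replace $f$ by $f-\int_M f\, d\mu$ and solve the Neumann problem $L_\mu u = f$, $\partial_n u|_{\partial M}=0$ (solvable since now $\int_M f\, d\mu=0$); in cases (1) (also after subtracting the mean) and (3), solve the Dirichlet problem $L_\mu u = f$, $u|_{\partial M}=0$. In each case the chosen boundary condition annihilates two of the three boundary terms above, and the one that survives is non-negative by hypothesis: $\int_{\partial M}\II_{\partial M}(\nabla_\partial u,\nabla_\partial u)\, d\mu_{\partial M,g}\ge0$ in case (2), and $\int_{\partial M}H_{g,\mu}(\partial_n u)^2\, d\mu_{\partial M,g}\ge0$ in cases (1) and (3). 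For the dimensional refinement, note $\Delta_g u = f+a$ with $a:=\scalar{\nabla_g V,\nabla_g u}$, so $\abs{\Hess_g u}^2\ge(\Delta_g u)^2/d=(f+a)^2/d$ by Cauchy--Schwarz on the eigenvalues of $\Hess_g u$; combining this with $\Ric_{g,\mu}(\nabla_g u,\nabla_g u)=\Ric_{g,\mu,N}(\nabla_g u,\nabla_g u)+a^2/(N-d)$ and the elementary pointwise bound $\tfrac{(f+a)^2}{d}+\tfrac{a^2}{N-d}\ge\tfrac{f^2}{N}$ (which, when $\tfrac1N<\tfrac1d$, follows by minimizing the left side in $a$; the boundary value $N=d$ forces $\mu\propto\vol_g$ and is immediate), the Reilly identity yields $\int_M\Ric_{g,\mu,N}(\nabla_g u,\nabla_g u)\, d\mu\le\tfrac{N-1}{N}\int_M f^2\, d\mu$ in cases (2) and (3), and $\le\tfrac{N-1}{N}\int_M f^2\, d\mu-\int_{\partial M}H_{g,\mu}(\partial_n u)^2\, d\mu_{\partial M,g}$ in case (1).

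To close, I would pair $f$ against $\nabla_g u$ using the first Green identity. In cases (2) and (3) the boundary term there vanishes (because $\partial_n u\equiv0$, respectively $f|_{\partial M}\equiv0$), so $\int_M f^2\, d\mu=-\int_M\scalar{\nabla f,\nabla_g u}\, d\mu$, and Cauchy--Schwarz with weight $\Ric_{g,\mu,N}$ combined with the estimate just obtained gives $\int_M f^2\, d\mu\le\bigl(\tfrac{N-1}{N}\int_M f^2\, d\mu\bigr)^{1/2}\bigl(\int_M\Ric_{g,\mu,N}^{-1}(\nabla f,\nabla f)\, d\mu\bigr)^{1/2}$, which rearranges to the claim ($\Var_\mu(f)$ in case (2), after undoing the mean subtraction; $\int_M f^2\, d\mu$ in case (3)). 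In case (1) there remains the boundary term $\int_{\partial M}f\,\partial_n u\, d\mu_{\partial M,g}$; since $\int_{\partial M}\partial_n u\, d\mu_{\partial M,g}=\int_M L_\mu u\, d\mu=\int_M f\, d\mu=0$, it equals $\int_{\partial M}(f-C)\,\partial_n u\, d\mu_{\partial M,g}$ for every $C\in\Real$, so I would bound it by Cauchy--Schwarz on $\partial M$ by $\bigl(\int_{\partial M}\tfrac1{H_{g,\mu}}(f-C)^2\, d\mu_{\partial M,g}\bigr)^{1/2}\bigl(\int_{\partial M}H_{g,\mu}(\partial_n u)^2\, d\mu_{\partial M,g}\bigr)^{1/2}$, add it to the interior Cauchy--Schwarz estimate, apply the two-term inequality $\sqrt{p_1q_1}+\sqrt{p_2q_2}\le\sqrt{p_1+p_2}\,\sqrt{q_1+q_2}$ with $p_1=\tfrac{N-1}{N}\int_M f^2\, d\mu-\int_{\partial M}H_{g,\mu}(\partial_n u)^2\, d\mu_{\partial M,g}$ and $p_2=\int_{\partial M}H_{g,\mu}(\partial_n u)^2\, d\mu_{\partial M,g}$, and finally minimize over $C$; this produces exactly the asserted inequality.

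The step I expect to be the main obstacle is twofold. First, deriving the weighted Reilly formula with the correct boundary terms and making it rigorous for $C^1$ data (solvability and boundary regularity for $L_\mu$, then approximation). Second, the delicate bookkeeping in case (1): recognizing that it is the \emph{Dirichlet} solution (not the Neumann one appropriate to case (2)) that must be used there, inserting the free constant $C$ via $\int_{\partial M}\partial_n u\, d\mu_{\partial M,g}=0$, and combining the interior and boundary Cauchy--Schwarz estimates through the two-term inequality so that the $\int_{\partial M}H_{g,\mu}(\partial_n u)^2$ contributions cancel cleanly.
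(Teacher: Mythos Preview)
Your proposal is correct and follows precisely the approach of the cited reference \cite{KolesnikovEMilman-Reilly}; note that the present paper does not actually prove this theorem but merely quotes it from that earlier work. The Appendix here does display the generalized Reilly formula and its use via the Neumann potential for the non-dimensional case $N=\infty$, which is exactly the skeleton you build on (adding the dimensional refinement $\abs{\Hess_g u}^2 \ge (\Delta_g u)^2/d$ together with the pointwise inequality $(f+a)^2/d + a^2/(N-d)\ge f^2/N$, and the Dirichlet potential with the two-term Cauchy--Schwarz for case (1)).
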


\subsection{Conformal Transformation}

We will need below a list of formulae for the change of various geometric quantities under conformal transformation. Some of them may be found in \cite{Besse-EinsteinManifoldsBook}, the others may be easily derived by direct computation. Assume we are given a conformal change
\[
g = e^{2 \varphi} \cdot g_0
\]
of a Riemannian metric $g_0$ on the manifold $M$. The symbols $\nabla, \Delta, \Gamma_0, D^2, \langle \cdot, \cdot \rangle$ below are understood with respect to the metric $g_0$. 

\begin{enumerate} 
\item 
Riemannian volume measure:
\[
\vol_{g} = \exp( d \varphi) \vol_{g_0} . 
\]
\item
Christoffel symbols: 
$$
\Gamma^{m}_{ij} = 
(\Gamma_0)^{m}_{ij} +  \delta^m_{i} \cdot \varphi_{x_j} + \delta^m_{j} \cdot \varphi_{x_i}
- (g_0)_{ij} \nabla^m \varphi.
$$
\item
Unweighted (geometric) Ricci tensor: $$
\Ric_g = 
\Ric_{g_0} -(d-2) \Bigl( D^2 \varphi - \nabla \varphi \otimes \nabla \varphi \Bigr) - \bigl( \Delta \varphi + (d-2) |\nabla \varphi|^2\bigr) g_0.
$$
\item
Hessian of a function $f$: $$
\mbox{Hess}_g (f) =  D^2 f - \nabla \varphi \otimes \nabla f - \nabla f \otimes \nabla \varphi + \langle \nabla \varphi, \nabla f \rangle \cdot g_0.
$$
\item
The generalized Ricci tensor of the weighted manifold
\begin{equation} \label{eq:conformal-weighted-manifold}
 (M, g, \mu = \exp(-V) \vol_{g_0} = \exp(-(V + d \varphi)) \vol_g)
\end{equation}
is given by
\begin{align*}
\Ric_{g,\mu} &= \Ric_{g} + \Hess_{g}(V + d \varphi) \\
& = \Ric_{g_0} + \Hess_g V +  2 D^2 \varphi - (d+2) \nabla \varphi \otimes \nabla \varphi + (2 \abs{\nabla \varphi}^2 - \Delta \varphi) \cdot g_0.
\end{align*}
\item 
Generalized $N$-dimensional Ricci tensor of the latter weighted manifold:
\begin{align} 
\nonumber  \Ric_{g,\mu,N}  & =  \Ric_{g,\mu}  +  \frac{1}{d-N} (\nabla  V + d \nabla \varphi) \otimes (\nabla  V + d \nabla \varphi)  \\
\nonumber &  = \Ric_{g_0} + D^2 V + \langle \nabla V, \nabla \varphi \rangle \cdot g_0 + \frac{1}{d-N} \nabla V \otimes \nabla V + \frac{N}{d-N} \bigl( \nabla V \otimes \nabla \varphi +  \nabla \varphi \otimes \nabla V \bigr) \\
\label{conform-gen-ric}  &  + \Bigl( \frac{dN}{d-N} -2 \Bigr) \nabla \varphi \otimes \nabla \varphi  + 2 D^2 \varphi + (2 |\nabla \varphi|^2 - \Delta \varphi) \cdot g_0.
\end{align}
\item
Given a smooth hypersurface $S\subset M$, we clearly have
\begin{align} 
\nonumber \vol_{g}|_{S} & = \exp((d-1)\varphi) \vol_{g_0}|_{S} ,\\
\label{eq:conformal-boundary-mes} \mu_{S,g} = \exp(-(V + d \varphi)) \vol_{g}|_{S} & = \exp(-(V + \varphi)) \vol_{g_0}|_{S} = \exp(-\varphi) \mu_{S,g_0} .
\end{align}
\item
Let $n_{0}$ denote a unit normal to $S$ in the $g_0$ metric. Note that $n_{0}$ remains perpendicular to $S$ in the conformal $g$ metric, but the unit normal vector is now $n_{g} = \exp(-\varphi) n_{0}$. We then obtain the following formulae for the second fundamental form $\II_g$ (with respect to $n_g$ and $g$) and the generalized mean curvature $H_{g,\mu}$ of $S$ in the weighted manifold (\ref{eq:conformal-weighted-manifold}): \begin{equation} \label{II}
\II_g   = e^{\varphi} \Bigl( \II_{0}   +   \langle \nabla \varphi, n_0\rangle  g_{0}|_{S}\Bigr),
\end{equation}
\begin{equation} \label{H}
H_{g,\mu} = e^{-\varphi} \Bigl( H_{0}  -  \langle  \nabla \varphi + \nabla V, n_0 \rangle \Bigr). 
\end{equation}
Here $\II_0$ and $H_0 = \text{tr}_{g_0}(\II_0)$ denote the second fundamental form and the unweighted (geometric) mean curvature with respect to $n_0$ and the $g_0$ metric. 
\end{enumerate}

\subsection{Spectral-Gap via Mean-Curvature and Boundary Angle}

The main result of this section is the following: 
\begin{thm} \label{31.10.2013}
Let  $\Omega \subset \Real^d$ denote a compact convex set with smooth boundary containing the origin in its interior, and let $\lambda_{\Omega}$ denote 
the normalized Lebesgue probability measure on $\Omega$. Let $N \leq 0$, and assume that
\begin{equation} \label{eq:small-cond}
\brac{\frac{1}{2} - \frac{1}{N}} d \geq 3 
\end{equation}
(in particular, this holds if $d \geq 6$). Then for any $f \in C^1(\Omega)$, we have
\[
\frac{1}{1-N} \Var_{\lambda_{\Omega}}(f) \le   \frac{4}{d(d-N)} \int_{\Omega} \abs{x}^2 |\nabla f|^2 d \lambda_{\Omega} + \frac{1}{\mbox{\rm{Vol}}(\Omega)} \int_{\partial \Omega} 
\frac{1}{ \frac{d-N}{2} \frac{ \langle  x, n\rangle}{ \abs{x}^2}  -N H_0(x)   } (f -C)^2 d \mathcal{H}^{d-1}.
\]
Here $C \in \Real $ is an arbitrary constant,  $n$ is the outer unit normal to $\partial \Omega$, and $H_0(x)$ is the Euclidean mean-curvature of $\partial \Omega$ at $x$. 
\end{thm}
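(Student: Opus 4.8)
The plan is to equip $\Omega$ with the conformal metric $g = e^{2\varphi} g_0$, $\varphi(x) = -\theta \log\abs{x}$ (so that $g = \abs{x}^{-2\theta} g_0$), for the specific exponent
\[
\theta := \frac{d-N}{-2N} = \frac{d + \abs{N}}{2\abs{N}} > 0
\]
(the hypothesis (\ref{eq:small-cond}) forces $N < 0$), to apply the generalized dimensional Brascamp--Lieb inequality with boundary (Theorem \ref{BL-Dir}, part (1)) to the weighted manifold $(\Omega, g, \lambda_\Omega)$ with this value of $N$, and to translate the resulting inequality back to the Euclidean metric using the conformal formulae (\ref{conform-gen-ric}), (\ref{H}) and (\ref{eq:conformal-boundary-mes}). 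The value of $\theta$ above is precisely the one which makes every constant match up at the end.

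First, the curvature computation. Since $g_0$ is Euclidean ($\Ric_{g_0} = 0$) and $\lambda_\Omega = \exp(-V)\,dx$ has constant potential ($\nabla V \equiv 0$, $D^2 V \equiv 0$), formula (\ref{conform-gen-ric}) collapses to $\Ric_{g,\lambda_\Omega,N} = (\tfrac{dN}{d-N} - 2)\nabla\varphi \otimes \nabla\varphi + 2 D^2\varphi + (2\abs{\nabla\varphi}^2 - \Delta\varphi) g_0$, and substituting $\varphi = -\theta\log\abs{x}$ produces the explicit tensor
\[
\Ric_{g,\lambda_\Omega,N} = \frac{1}{\abs{x}^2}\Bigl( a \cdot \mathrm{Id} + b\,\frac{x \otimes x}{\abs{x}^2} \Bigr), \qquad a = \theta(2\theta + d - 4), \quad b = \Bigl(\tfrac{dN}{d-N} - 2\Bigr)\theta^2 + 4\theta ,
\]
whose eigenvalues are $a/\abs{x}^2$ on $x^\perp$ (multiplicity $d-1$) and $(a+b)/\abs{x}^2$ in the radial direction. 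For the above $\theta$ one checks $N\theta = -\tfrac{d-N}{2}$, hence $a + b = \theta d/2 = \tfrac{d(d-N)}{4\abs{N}}$; moreover $a - (a+b) = -b$, and a direct calculation identifies the condition $b \le 0$ with the hypothesis (\ref{eq:small-cond}). Consequently, under (\ref{eq:small-cond}), $\Ric_{g,\lambda_\Omega,N}$ is positive-definite with smallest eigenvalue $\tfrac{a+b}{\abs{x}^2} = \tfrac{d(d-N)}{4\abs{N}\abs{x}^2} > 0$, so that $\Ric_{g,\lambda_\Omega,N}^{-1} \le \tfrac{4\abs{N}}{d(d-N)}\abs{x}^2 \cdot \mathrm{Id}$ as positive-definite matrices. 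For the boundary, (\ref{H}) with $\nabla V \equiv 0$ gives $H_{g,\lambda_\Omega} = \abs{x}^{\theta}\bigl(H_0(x) + \theta\tfrac{\scalar{x,n}}{\abs{x}^2}\bigr)$, which is strictly positive since $H_0 \ge 0$ ($\Omega$ is convex) and $\scalar{x,n} > 0$ (the origin lies in $\intr(\Omega)$); hence $\partial\Omega$ is strictly generalized mean-convex.

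Now Theorem \ref{BL-Dir}(1) applies and yields
\[
\frac{N}{N-1}\Var_{\lambda_\Omega}(f) \le \int_\Omega \scalar{\Ric_{g,\lambda_\Omega,N}^{-1}\nabla f, \nabla f}\,d\lambda_\Omega + \min_{C\in\Real}\int_{\partial\Omega} \frac{(f-C)^2}{H_{g,\lambda_\Omega}}\,d\mu_{\partial\Omega,g} .
\]
Since $N < 0$ we have $\tfrac{N}{N-1} = \tfrac{\abs{N}}{1-N} > 0$, so dividing through by $\abs{N}$ leaves $\tfrac{1}{1-N}\Var_{\lambda_\Omega}(f)$ on the left. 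On the right, the matrix bound on $\Ric_{g,\lambda_\Omega,N}^{-1}$ turns the interior term into at most $\tfrac{1}{\abs{N}} \cdot \tfrac{4\abs{N}}{d(d-N)}\int_\Omega \abs{x}^2\abs{\nabla f}^2\,d\lambda_\Omega = \tfrac{4}{d(d-N)}\int_\Omega \abs{x}^2\abs{\nabla f}^2\,d\lambda_\Omega$; and by (\ref{eq:conformal-boundary-mes}) together with the formula for $H_{g,\lambda_\Omega}$, one has $\tfrac{1}{H_{g,\lambda_\Omega}}\,d\mu_{\partial\Omega,g} = \tfrac{1}{\text{Vol}(\Omega)}\bigl(H_0 + \theta\tfrac{\scalar{x,n}}{\abs{x}^2}\bigr)^{-1}\,d\mathcal{H}^{d-1}$, so after dividing by $\abs{N}$ and using $\abs{N}\theta = \tfrac{d-N}{2}$ and $-N = \abs{N}$ the boundary term becomes exactly $\tfrac{1}{\text{Vol}(\Omega)}\int_{\partial\Omega}\bigl(\tfrac{d-N}{2}\tfrac{\scalar{x,n}}{\abs{x}^2} - N H_0\bigr)^{-1}(f-C)^2\,d\mathcal{H}^{d-1}$. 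This is the asserted inequality (for an arbitrary $C$, since one may restrict the minimum to any chosen value).

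The one genuine technical point is that $g = \abs{x}^{-2\theta} g_0$ is singular at the origin, which lies in $\intr(\Omega)$, whereas Theorem \ref{BL-Dir} is stated for a smooth compact weighted manifold. I would resolve this by regularization: replace $\abs{x}^{-2\theta}$ by a smooth conformal factor that agrees with it outside $B(0,\eps)$ and, on $B(0,\eps)$, carries a fixed rescaled radial profile chosen so that the regularized generalized Ricci tensor stays bounded below there by $c\,\eps^{-2} g_0$ for a fixed $c > 0$ --- this is possible because near the origin the genuine tensor blows up like $\abs{x}^{-2}$, leaving ample room. Applying the argument above to the regularized manifold and letting $\eps \to 0$, the modified region $B(0,\eps)$ has $\lambda_\Omega$-measure $O(\eps^d)$, does not meet $\partial\Omega$ (so the boundary term is unaffected), and on it $\scalar{\Ric_{g_\eps,\lambda_\Omega,N}^{-1}\nabla f, \nabla f} \le c^{-1}\eps^2\abs{\nabla f}^2$, so the contribution there is $O(\eps^{d+2})$; off $B(0,\eps)$ everything coincides with the genuine objects, and the remaining integrals converge as $\eps \to 0$. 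Making this regularization, and the persistence of positivity of the generalized Ricci tensor under it, fully rigorous is the step I expect to require the most care; once $\theta$ is identified, the curvature identity (\ref{conform-gen-ric}) and the bookkeeping of constants are otherwise routine.
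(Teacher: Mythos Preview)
Your proposal is correct and follows essentially the same route as the paper: the same conformal metric $g=|x|^{-2\theta}g_0$ with the same $\theta=-\tfrac{d-N}{2N}$, the same eigenvalue computation for $\Ric_{g,\lambda_\Omega,N}$, the same identification of (\ref{eq:small-cond}) with the condition that the radial eigenvalue is the smaller one, and the same application of Theorem~\ref{BL-Dir}(1).

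The only substantive difference is in how the singularity at the origin is handled. You propose a cutoff regularization supported in $B(0,\eps)$ and flag its construction as the delicate point. The paper instead uses the globally smooth regularization $\varphi_\eps=-\tfrac{\theta}{2}\log(|x|^2+\eps)$ from the outset, so that all curvature and boundary formulae are computed exactly for $\eps>0$ (the Ricci tensor comes out as the desired expression times a uniform $(1+o(\eps))$ factor), and then simply passes to the limit $\eps\to 0$. This sidesteps the need to design a profile on $B(0,\eps)$ with controlled Ricci curvature and makes the step you anticipated as ``requiring the most care'' entirely routine. Finally, note that the theorem also admits $N=0$; your choice of $\theta$ degenerates there, and the paper obtains that case by letting $N\to 0^-$ in the already-established inequality.
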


Of course one should optimize on the location of the origin to obtain the best possible (translation invariant) estimate above. 

\begin{proof}
Consider the weighted manifold $(\Omega, g = e^{2\varphi} \cdot g_0, \lambda_{\Omega})$, where $g_0$ is the standard Euclidean metric, with
\begin{equation} \label{eq:conformal-poly}
\varphi = - \frac{\theta}{2} \log (\abs{x}^2 + \eps),
\end{equation}
where $\eps >0$ will be sent to $0$ and the parameter $\theta \ge 0$ will be chosen later.
According to (\ref{conform-gen-ric}),
\[
\Ric_{g,\lambda_{\Omega},N}  = 
  \Bigl( \frac{dN}{d-N} -2 \Bigr) \nabla \varphi \otimes \nabla \varphi
   + 2 D^2 \varphi + (2 |\nabla \varphi|^2 - \Delta \varphi) Id .
\]
Plugging in the following expressions for $\nabla \varphi,  D^2 \varphi, \Delta \varphi$:
\[
\nabla \varphi = - \theta \frac{x}{\abs{x}^2 + \eps}, \ D^2 \varphi = - \theta \Bigl( \frac{Id}{\abs{x}^2+\eps} - 2 \frac{x \otimes x}{(\abs{x}^2+\eps)^2}\Bigr) ,
\]
\[
\Delta \varphi = - \theta \Bigl(\frac{d}{\abs{x}^2+\eps} - \frac{2 \abs{x}^2}{(\abs{x}^2+\eps)^2}\Bigr) ,
\]
we obtain (for all $N \in (-\infty,\infty]$)
\begin{align*}
\Ric_{g,\lambda_{\Omega},N} &  =
\Bigl[ \theta^2 \Bigl( \frac{dN}{d-N} -2 \Bigr) + 4 \theta \Bigr]\frac{x \otimes x}{(\abs{x}^2 + \eps)^2}
+ \frac{1}{\abs{x}^2 + \eps} \Bigl[  \theta (d-2) +  \frac{2\theta (\theta -1 ) \abs{x}^2}{\abs{x}^2 + \eps} \Bigr] Id \\
& = \frac{1}{\abs{x}^2} \brac{
 \theta (d + 2 \theta - 4)\Bigl( Id - \frac{x}{\abs{x}} \otimes \frac{x}{\abs{x}} \Bigr) + \brac{d \theta + \frac{dN}{d-N} \theta^2} \frac{x}{\abs{x}} \otimes \frac{x}{\abs{x}} } (1+o(\eps)).
\end{align*}
Clearly,  the tensor is non-negative as $\eps \rightarrow 0$  if and only if
\[
d +  2 \theta - 4 \ge 0 \; \text{ and } \;
1   +    \theta \frac{N}{d-N}  \ge 0.
\]
Assuming that $N < 0$, and setting $\theta$ so as to maximize the generalized Ricci curvature in the radial direction:
\[
\theta = - \frac{d-N}{2N} > 0 ,
\]
one easily verifies that when (\ref{eq:small-cond}) holds, then
\[
\Ric_{g,\lambda_{\Omega},N} \ge - \frac{d(d-N)}{4N \abs{x}^2} Id \; (1 + o(\eps)) .
\]
Lastly, we obtain from (\ref{H}) and (\ref{eq:conformal-boundary-mes}) the following expressions for the generalized mean curvature and boundary volume measure:
\[
H_{g,\lambda_{\Omega}}(x) =
(\abs{x}^2 + \eps)^{\frac{\theta}{2}} \Bigl( H_0(x)   - \frac{d-N}{2 N}   \frac{\langle  x, n\rangle}{\abs{x}^2 + \eps}   \Bigr) ~,~ (\lambda_{\Omega})_{\partial \Omega,g} = (\abs{x}^2 + \eps)^{\frac{\theta}{2}} \mathcal{H}^{d-1}|_{\partial \Omega} .
\]
Since $H_0 \geq 0$, $N < 0$ and $\scalar{x,n} > 0$ (as the origin lies in the interior of the convex $\Omega$), we confirm that $H_{g,\lambda_{\Omega}} > 0$ on $\partial \Omega$. 

Applying the first assertion of Theorem \ref{BL-Dir} and taking limit as $\eps \to 0$, the asserted estimate follows. The case $N=0$ is obtained by passing to the limit. 
\end{proof}

\begin{rem}
For functions $f$ which vanish on $\partial \Omega$, one may apply the third assertion of Theorem \ref{BL-Dir} instead of the first one, and recover (using the optimal $N=0$) the following classical result of Hardy \cite{Ghoussoub-Book}:
\[
\int_{\Omega} f^2 dx \le   \frac{4}{d^2} \int_{\Omega} \abs{x}^2 |\nabla f|^2 dx .
\] 
Consequently, Theorem \ref{31.10.2013} may be thought of as an extension of Hardy's inequality for general functions with boundary term.
\end{rem}

\begin{rem}
Setting $N=0$, Theorem \ref{31.10.2013} yields
\begin{equation} \label{eq:Hardy-new}
\Var_{\lambda_{\Omega}}(f) \leq \frac{4}{d^2} \int_{\Omega} \abs{x}^2 |\nabla f|^2 d \lambda_{\Omega} + \frac{2}{\mbox{\rm{Vol}}(\Omega)} \min_{C \in \Real} \int_{\partial \Omega}  \frac{\abs{x}^2}{ d \scalar{x, n}} (f -C)^2 d \mathcal{H}^{d-1} .
\end{equation}
We remark that the following strictly stronger result was obtained by other methods in \cite{KolesnikovEMilman-HardyKLS}:
\begin{equation} \label{eq:Hardy-old}
\Var_{\lambda_{\Omega}}(f) \le \frac{4}{d^2} \int_{\Omega} \scalar{ x, \nabla f}^2  d\lambda_\Omega 
+ 2 \Var_{\sigma_{\Omega}}(f|_{\partial \Omega}) ,
\end{equation}
where recall $\sigma_{\Omega}$ denotes the cone probability measure, which by (\ref{eq:cone-formula}) satisfies
\[
d\sigma_{\Omega} = \frac{1}{\rm{Vol}(\Omega)} \frac{\scalar{ x, n}}{d} \cdot d\mathcal{H}^{d-1}|_{\partial \Omega}.
\]
Note that
\[
\scalar{ x, \nabla f}^2 \leq \abs{x}^2 \abs{\nabla f}^2 ~,~ \scalar{ x, n} \leq \frac{\abs{x}^2}{\scalar{x,n}} ,
\]
elucidating the relation between (\ref{eq:Hardy-new}) and (\ref{eq:Hardy-old}). The main new case of interest in Theorem \ref{31.10.2013} is therefore when $N < 0$. 
\end{rem}

\begin{rem}
As in \cite{KolesnikovEMilman-HardyKLS}, we may use Theorem \ref{31.10.2013} to reduce the problem 
of bounding above the Poincar\'e constant $C_P(\Omega)$, to controlling the variance of $1$-Lipschitz functions on its boundary. Indeed, setting $N=-1$, we have for such functions (since $\abs{\nabla f} \leq 1$)
\[
\Var_{\lambda_\Omega}(f) \le \frac{8}{d(d+1)} \int_{\Omega} \abs{x}^2  d \lambda_{\Omega}
+ \frac{1}{\mbox{\rm{Vol}}(\Omega)} \int_{\partial \Omega} 
\frac{2}{H_0   + \frac{d+1}{2}  \frac{\langle  x, n\rangle}{\abs{x}^2}} (f -C)^2 d \mathcal{H}^{d-1} .
\]
Assume that $\Omega$ is isotropic, i.e. that the covariance of a random vector uniformly distributed in $\Omega$ is the identity matrix. Then $\frac{1}{d^2} \int_{\Omega} \abs{x}^2  d \lambda_{\Omega} = \frac{1}{d}$, which is negligible with respect to the variance of the worst $1$-Lipschitz function (by e.g. testing linear ones). 
Using (\ref{eq:OneLip}), it follows as in \cite{KolesnikovEMilman-HardyKLS} that
\[
C_P(\Omega)\leq C' \sup \set{ \frac{1}{\mbox{\rm{Vol}}(\Omega)} \min_{C\in \mathbb{R}}\int_{\partial \Omega} 
\frac{1}{H_0   + \frac{d+1}{2}  \frac{\langle  x, n\rangle}{\abs{x}^2} } (f -C)^2 d \mathcal{H}^{d-1} ; \text{$f$ is $1$-Lipschitz}} ,
\]
where $C'>0$ is a numeric constant. \end{rem}

\begin{rem}
The method described in this subsection is very general, and is not restricted to radial conformal metric changes. For instance, let us consider  $(B_p^d,g,\lambda_{\Omega_p})$, where $B^d_p = \{ \sum_{i=1}^{d} |x_i|^p \le 1\}$ is the unit-ball of $\ell_p^d$,
and $g = \exp(c_p \sum_{i=1}^{d} |x_i|^p)\cdot g_0$ is conformal to the Euclidean metric $g_0$ (with $c_p > 0$ a sufficiently small constant). We can show that this weighted manifold's generalized Ricci tensor is bounded from below by $c'_p d^{2/p} g_0$ if $1 < p  \le 2$.
A simpleminded application of the Brascamp--Lieb inequality seems to immediately recover (up to a dimension independent constant) the known estimate on the Poincar\'e constant of $B_p^d$ \cite{SodinLpIsoperimetry}, but in fact this observation is not applicable directly because $\partial B_p^d$ is not $g$-locally-convex. This obstacle can be bypassed as in this subsection, by employing the first assertion of Theorem \ref{BL-Dir} in the case of a (generalized) mean-convex boundary. Another approach is given in \cite{KolesnikovEMilman-HardyKLS}. 
\end{rem}

\subsection{Spectral Gap for Sets with Strongly Convex Boundary}

Repeating the proof of Theorem \ref{31.10.2013} and applying (when applicable) positivity of the second fundamental form, we obtain the following result:

\begin{thm}
Let  $\Omega \subset \Real^d$, $d \geq 8$, denote a compact set with smooth boundary so that $0 \notin \partial \Omega$, and let $\lambda_{\Omega}$ denote 
the normalized Lebesgue probability measure on $\Omega$. Assume that the (Euclidean $g_0$) second fundamental form on $\partial \Omega$ satisfies
\[
\II_{\partial \Omega, g_0} \ge  \theta \frac{\scalar{x, n}}{\abs{x}^2} \cdot \mbox{\rm{Id}}|_{\partial \Omega},
\]
for some $0 < \theta \leq 1/2$. 
Then for all $f \in C^1(\Omega)$,
\[
\Var_{\lambda_\Omega}(f) \le \frac{2}{d \theta} \int_{\Omega} \abs{x}^2 |\nabla f|^2 d \lambda_\Omega.
\]
Moreover,
\[
\Ent_{\lambda_\Omega}(f^2) \le \frac{4 (\max_{x \in \Omega} |x|)^{2(1-\theta)}}{d \theta}  \int_{\Omega} |x|^{2\theta} |\nabla f|^2 d \lambda_\Omega.
\]
\end{thm}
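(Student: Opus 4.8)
The plan is to rerun the conformal-metric argument from the proof of Theorem \ref{31.10.2013}, this time tracking the genuine generalized Ricci tensor $\Ric_{g,\lambda_\Omega}$ (no generalized dimension needed, since $\lambda_\Omega$ has constant potential) and exploiting the hypothesis on $\II_{\partial\Omega,g_0}$ to get a \emph{locally convex} boundary in the new metric, so that the clean Brascamp--Lieb Theorem \ref{thm:BL} and the Bakry--\'Emery criterion Theorem \ref{thm:BE} apply directly. Concretely, equip $\Omega$ with $g = e^{2\varphi} g_0$, where $g_0$ is the Euclidean metric and $\varphi = -\tfrac{\theta}{2}\log(\abs{x}^2+\eps)$ with $\eps>0$ to be sent to $0$. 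Plugging the standard expressions for $\nabla\varphi, D^2\varphi, \Delta\varphi$ into the conformal rule for $\Ric_{g,\mu}$ (with $V$ constant and $\Ric_{g_0}=0$), decomposing into radial and tangential parts and letting $\eps\to 0$, one gets
\[
\Ric_{g,\lambda_\Omega} = \frac{1}{\abs{x}^2}\Bigl( \theta(d+2\theta-4)\bigl(\mathrm{Id} - \tfrac{x}{\abs{x}}\otimes\tfrac{x}{\abs{x}}\bigr) + d\,\theta(1-\theta)\,\tfrac{x}{\abs{x}}\otimes\tfrac{x}{\abs{x}} \Bigr) .
\]
The bound $\theta\le 1/2$ gives $d\,\theta(1-\theta)\ge \theta d/2$ (the radial direction), while $d\ge 8$ gives $\theta(d+2\theta-4)\ge \theta d/2$ (the tangential directions), so $\Ric_{g,\lambda_\Omega}\ge \tfrac{\theta d}{2\abs{x}^2}\,g_0$ on $\Omega$; for $\eps>0$ small the corresponding strictly positive bound holds by continuity and compactness of $\Omega$.

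Next I would verify the Geometric Convexity Assumptions for $(\Omega, g, \lambda_\Omega)$ (or its $\eps$-regularization): it is oriented, and compact hence complete, and the density of $\lambda_\Omega$ is smooth and positive up to $\partial\Omega$. The one substantive point — and the only place the strong-convexity hypothesis is used — is local convexity of $\partial\Omega$ in the conformal metric: by $(\ref{II})$, with $n_0=n$ the Euclidean outer normal and $\scalar{\nabla\varphi,n} = -\theta\tfrac{\scalar{x,n}}{\abs{x}^2+\eps}$,
\[
\II_g = e^{\varphi}\Bigl( \II_{\partial\Omega,g_0} - \theta\,\tfrac{\scalar{x,n}}{\abs{x}^2+\eps}\,g_0|_{\partial\Omega}\Bigr),
\]
which is $\ge 0$ by assumption — using that $0\notin\partial\Omega$, so $\abs{x}$ is bounded away from $0$ on $\partial\Omega$, and taking $\eps$ sufficiently small. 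With the Geometric Convexity Assumptions in force, Theorem \ref{thm:BL} yields $\Var_{\lambda_\Omega}(f)\le \int_\Omega \scalar{\Ric_{g,\lambda_\Omega}^{-1}\nabla f,\nabla f}\,d\lambda_\Omega$; since $\Ric_{g,\lambda_\Omega}\ge \tfrac{\theta d}{2\abs{x}^2}g_0$ as bilinear forms implies $\Ric_{g,\lambda_\Omega}^{-1}\le \tfrac{2\abs{x}^2}{\theta d} g_0^{-1}$, passing to the limit $\eps\to0$ gives the first assertion.

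For the log-Sobolev inequality I would rewrite the curvature bound intrinsically: since $g_0 = \abs{x}^{2\theta} g$ (as $\eps\to0$), $\tfrac{\theta d}{2\abs{x}^2} g_0 = \tfrac{\theta d}{2}\abs{x}^{2\theta-2} g$, and because $2\theta-2<0$ the factor $\abs{x}^{2\theta-2}$ attains its minimum over the bounded set $\Omega$ at $\abs{x}=R:=\max_{x\in\Omega}\abs{x}$, so $\Ric_{g,\lambda_\Omega}\ge \rho\, g$ with $\rho = \tfrac{\theta d}{2}R^{2\theta-2}$. Theorem \ref{thm:BE} then gives $\Ent_{\lambda_\Omega}(f^2)\le \tfrac{2}{\rho}\int \abs{\nabla_g f}^2\,d\lambda_\Omega$, and substituting $\abs{\nabla_g f}^2 = \abs{x}^{2\theta}\abs{\nabla f}^2$ and $\tfrac{2}{\rho} = \tfrac{4R^{2-2\theta}}{\theta d}$ yields the second assertion. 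The main obstacle is the verification of the Geometric Convexity Assumptions — i.e.\ $g$-local convexity of $\partial\Omega$ — and in particular the bookkeeping of the $\eps$-regularization near the origin when $0\in\intr(\Omega)$, where $\abs{x}^{-2\theta}$ is singular; everything else is a computation parallel to that of Theorem \ref{31.10.2013} followed by two direct applications of Theorems \ref{thm:BL} and \ref{thm:BE}.
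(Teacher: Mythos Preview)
Your proposal is correct and follows essentially the same approach as the paper: the same conformal metric $g=(\abs{x}^2+\eps)^{-\theta}g_0$, the same Ricci computation and lower bound $\Ric_{g,\lambda_\Omega}\ge \tfrac{\theta d}{2\abs{x}^2}\,\mathrm{Id}$ from $d\ge 8$, $\theta\le 1/2$, the same use of (\ref{II}) together with the hypothesis on $\II_{\partial\Omega,g_0}$ to get $g$-local convexity of $\partial\Omega$, and then Brascamp--Lieb for the variance bound and Bakry--\'Emery for the entropy bound. The only cosmetic difference is that the paper cites part (2) of Theorem \ref{BL-Dir} with $N=\infty$ (compact, locally-convex) rather than Theorem \ref{thm:BL}, which amount to the same inequality here; and the paper phrases the $\eps$-bookkeeping for $\II_g\ge 0$ as ``slightly modifying $\theta$ if necessary'' rather than ``taking $\eps$ small''.
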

\begin{proof}
Repeating the proof of Theorem \ref{31.10.2013}, we employ the conformal transformation
\[
g = (\abs{x}^2 + \eps)^{-\theta } g_0 ,
\]
given by (\ref{eq:conformal-poly}) using $\theta$ from the hypothesis. By (\ref{II}), we see that
\[
\II_{\partial \Omega, g} = (\abs{x}^2 + \eps)^{-\frac{\theta}{2}} \brac{ \II_{\partial \Omega,g_0} - \theta \frac{\scalar{x,n}}{\abs{x}^2 + \eps} \mbox{\rm{Id}}|_{\partial \Omega} } ,
\]
so that for $\eps > 0$ small enough (slightly modifying $\theta$ if necessary), $\partial M$ is $g$-locally-convex. 
Since
\[
\Ric_{g,\lambda_\Omega,\infty} = 
\frac{1}{\abs{x}^2} \brac{
 \theta (1-\theta) d \cdot \frac{x}{\abs{x}} \otimes \frac{x}{\abs{x}} + \theta ( 2\theta +d - 4)\Bigl( {\mbox{\rm{Id}}}  - \frac{x}{\abs{x}} \otimes \frac{x}{\abs{x}} \Bigr)  } (1+o(\eps)),
\]
we see that whenever $0 < \theta \leq 1/2$ and $d\geq 8$, 
\[
\Ric_{g,\lambda_\Omega } = \Ric_{g,\lambda_\Omega,\infty}  \geq \frac{\theta d}{2 \abs{x}^2} \cdot {\mbox{\rm{Id}}}  \;  (1+o(\eps)) . 
\]
The first assertion then follows by applying the second part of Theorem \ref{BL-Dir} (with $N=\infty$), and passing to the limit as $\eps \rightarrow 0$.

The second assertion follows by applying the Bakry--\'Emery criterion (Theorem \ref{thm:BE}) for locally convex manifolds, after noting that
\[
\Ric_{g,\lambda_\Omega} \geq \frac{\theta d}{2 \abs{x}^2} \cdot  {\mbox{\rm{Id}}} \; (1+o(\eps)) \ge \frac{\theta d}{2 (\max_{x \in \Omega} |x|)^{2(1-\theta)}} \cdot  g \; (1+o(\eps)) . 
\]
\end{proof}

\begin{rem}
Similar results to those in this section may be obtained by using other conformal metrics, such as $g = \exp(- 2 \theta \abs{x}) g_0$ and $g = \exp(- \theta \abs{x}^2) g_0$. 
\end{rem}

\section*{Appendix}  
\renewcommand{\thesection}{A}
\setcounter{thm}{0}
\setcounter{equation}{0}  \setcounter{subsection}{0}

In the Appendix, we provide rigorous proofs of several key inequalities used in this work. 

\medskip

For the reader's convenience, we first state the following particular case of Klartag's recently obtained Riemannian version of the localization method \cite{KlartagLocalizationOnManifolds}.

\begin{thm}[Klartag's Localization] \label{thm:localization}
Let $(M^d,g,\mu = \exp(-V) \vol_g)$ denote a geodesically convex weighted manifold (without boundary). 
Let $f : M \rightarrow \Real$ denote a $\mu$-integrable function so that $\int_M f d\mu = 0$ and $\int \abs{f(x)} d(x,x_0) d\mu < \infty$ for some $x_0 \in M$. Then there exists a Lebesgue measurable set $S \subset M$, a measure space $(\Lambda,\mathcal{F},\nu)$ and a collection of Lebesgue measures $\set{\mu_\alpha}_{\alpha \in \Lambda}$ on $(M,g)$ so that:
\begin{enumerate}
\item $f \equiv 0$ $\mu$-a.e. on $M \setminus S$. 
\item $\set{\mu_\alpha}$ is a disintegration of $\mu$ on S. \\
Namely, for any Lebesgue measurable set $A \subset M$:
\begin{enumerate}
\item
 The function $\alpha \mapsto \mu_\alpha(A)$ is well-defined  $\nu$-a.e. and is $\nu$-measurable. 
\item 
$\mu(A \cap S) = \int \mu_\alpha(A) d\nu(\alpha)$. 
\end{enumerate}
\item For $\nu$-a.e. $\alpha \in \Lambda$, $\mu_\alpha$ is $f$-balanced: $\int f d\mu_{\alpha} = 0$. \\
In particular, $f$ is $\mu_\alpha$ integrable for all such $\alpha \in \Lambda$. 

\item For $\nu$-a.e. $\alpha \in \Lambda$, $\mu_\alpha$ is a needle: \\
There exists an open (possibly infinite) interval $L_\alpha \subset \Real$ endowed with a measure $\eta_\alpha = \exp(-V_\alpha(t)) dt$, with $V_\alpha$ smooth on $L_\alpha$, and a distance-minimizing smooth geodesic curve $\gamma_\alpha: L_\alpha \rightarrow M$ parametrized by arc-length, so that $\gamma_\alpha$ pushes-forward $\eta_\alpha$ onto $\mu_\alpha$. 
Furthermore, on $L_\alpha$, $V_\alpha$ satisfies
\[
V_\alpha''(t) \geq \Ric_{g,\mu}(\gamma_\alpha'(t),\gamma_\alpha'(t))  . 
\]
\end{enumerate}
\end{thm}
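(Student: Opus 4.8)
The plan is to deduce this statement directly from B.~Klartag's general localization theorem on Riemannian manifolds \cite{KlartagLocalizationOnManifolds}, after checking that the hypotheses stated here fall within its scope. Klartag's method is a far-reaching extension of the classical localization paradigm, built on the theory of $L^1$-optimal transport on manifolds, and the needle decomposition above is essentially its output; thus the honest proof is an invocation of \cite{KlartagLocalizationOnManifolds}, and I would only sketch the mechanics to indicate which step carries the weight.

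First I would set up the transport problem. Since $f$ is $\mu$-integrable, has mean zero, and satisfies $\int \abs{f(x)} d(x,x_0) d\mu(x) < \infty$, the signed measure $f d\mu$ admits a finite-cost $L^1$-Monge--Kantorovich transport between its positive and negative parts $f_+ d\mu$ and $f_- d\mu$, the cost being the Riemannian distance; let $u$ be an associated Kantorovich potential, a $1$-Lipschitz function on $(M,g)$. The geometry of $1$-Lipschitz functions on a geodesically convex manifold organizes the ``transport set'' $S$ into a disjoint family of maximal transport rays, and a first-variation argument shows each ray is a distance-minimizing geodesic; geodesic convexity of $\intr(M)$ is exactly what guarantees these rays stay inside $M$. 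Since $f_\pm d\mu$ is concentrated on $S$ up to a $\mu$-null set, $f \equiv 0$ $\mu$-a.e.\ on $M\setminus S$, giving item (1).

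Next I would carry out the disintegration: parametrize the rays by a transversal parameter $\alpha$ ranging over a measurable space $(\Lambda,\mathcal{F},\nu)$, and use the essential non-branching of transport rays together with a coarea/change-of-variables argument to produce conditional measures $\mu_\alpha$ supported on the individual rays with $\mu|_S = \int \mu_\alpha\, d\nu(\alpha)$, which is item (2). Monotonicity of the $L^1$-transport along each ray forces $\int f\, d\mu_\alpha = 0$ for $\nu$-a.e.\ $\alpha$, which is item (3); parametrizing the ray by arclength via a geodesic $\gamma_\alpha$, this exhibits $\mu_\alpha$ as the push-forward of a one-dimensional measure $\eta_\alpha = \exp(-V_\alpha(t))\,dt$. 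For item (4) one reads $V_\alpha$ off the Jacobian of the ray map: the density $\exp(-V_\alpha)$ is (up to normalization) the conditional Jacobian density along $\gamma_\alpha$, and the Riccati/Bochner comparison for the second fundamental form of the level sets of $u$ — the same computation underlying the Bakry--\'Emery curvature-dimension calculus — yields $V_\alpha''(t) \ge \Ric_{g,\mu}(\gamma_\alpha'(t),\gamma_\alpha'(t))$.

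The main obstacle is the disintegration together with its measurability assertions: one must show that $\mu$-almost every point of $S$ lies in the relative interior of a nondegenerate transport ray (so the ``branching set'' is $\mu$-null), and that the assignment of a point to its ray is measurable with $\nu$-measurably varying conditionals. This is precisely the technical heart of \cite{KlartagLocalizationOnManifolds}, mirroring the Euclidean analysis of Feldman--McCann and its successors, and I would invoke it wholesale. The only point deserving separate comment is that our hypotheses — $\intr(M)$ geodesically convex and without boundary, $\mu$ with smooth positive density, and $\int \abs{f}\,d(\cdot,x_0)\,d\mu < \infty$ — are among those under which Klartag's theorem applies verbatim, so no genuinely new argument is needed here.
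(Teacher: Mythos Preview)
Your proposal is correct and matches the paper's approach: the paper does not prove this statement at all but merely records it ``for the reader's convenience'' as a particular case of Klartag's localization theorem \cite{KlartagLocalizationOnManifolds}, so your plan to invoke that result directly (with a sketch of its mechanics) is exactly in line with the paper, and indeed provides more detail than the paper itself.
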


\begin{rem}
In fact, Klartag shows that $\set{\gamma_\alpha(L_\alpha)}_{\alpha \in \Lambda}$ is a partition of $S$, up to a $\mu$ null-set. 
Note that the Lebesgue $\sigma$-field is well-defined on a smooth differentiable manifold (see \cite{KlartagLocalizationOnManifolds} or just define it as the completion of the Borel $\sigma$-field). 
\end{rem}

We now proceed to give a proof of Theorem \ref{thm:BL}, which we recall here for convenience:
\begin{thm}[Generalized Brascamp--Lieb Inequality]  \label{thm:BL-appendix}
Let $(M,g,\mu)$ denote a weighted Riemannian manifold satisfying the Geometric Convexity Assumptions. Assume that $\Ric_{g,\mu} > 0$ on $M$. 
Then for all $f \in C^1(M)$,
\[
\Var_\mu(f) \leq \int_M \scalar{\Ric_{g,\mu}^{-1} \nabla f,\nabla f} d\mu .
\]
\end{thm}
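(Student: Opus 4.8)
The plan is to follow the two strategies indicated in the text, treating the two alternatives in the Geometric Convexity Assumptions separately; the geodesically convex case is the substantive one and will be handled via Klartag's localization (Theorem \ref{thm:localization}), while the complete locally-convex case will be reduced to the compact sub-case already established in \cite{KolesnikovEMilman-Reilly}.

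First, some routine reductions. Replacing $f$ by $f - \int f \, d\mu$ alters neither side (the gradient, and hence the right-hand side, is unchanged, while the left-hand side becomes $\int f^2 d\mu$), so we may assume $\int f \, d\mu = 0$. If the right-hand side is infinite there is nothing to prove, so we assume it is finite; by a standard truncation argument — multiplying $f$ by cutoff functions with controlled gradients and using that a finite right-hand side together with the (not yet proven) inequality forces $f \in L^2(\mu)$ — we may further assume that $f$ is bounded and that all integrability hypotheses used below hold, in particular $\int \abs{f(x)} d(x,x_0) \, d\mu(x) < \infty$ for some fixed $x_0$.

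Case (2) ($\intr(M)$ geodesically convex): apply Theorem \ref{thm:localization} to $f$. This yields a set $S$ with $f \equiv 0$ $\mu$-a.e. off $S$, a disintegration $\set{\mu_\alpha}_{\alpha \in \Lambda}$ of $\mu|_S$ over $(\Lambda,\mathcal{F},\nu)$, with each $\mu_\alpha$ being $f$-balanced and equal to the push-forward of some $\eta_\alpha = \exp(-V_\alpha(t)) dt$ on an interval $L_\alpha$ under a unit-speed distance-minimizing geodesic $\gamma_\alpha$, where $V_\alpha''(t) \ge \Ric_{g,\mu}(\gamma_\alpha'(t),\gamma_\alpha'(t))$. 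Since $\Ric_{g,\mu} > 0$, each $V_\alpha$ is strictly convex, so the classical one-dimensional Brascamp--Lieb inequality applies to the log-concave weight $\eta_\alpha$; writing $h_\alpha(t) := f(\gamma_\alpha(t))$ and using $\int h_\alpha \, d\eta_\alpha = 0$,
\[
\int f^2 \, d\mu_\alpha \;=\; \int_{L_\alpha} h_\alpha^2 \, d\eta_\alpha \;\le\; \int_{L_\alpha} \frac{(h_\alpha')^2}{V_\alpha''} \, d\eta_\alpha \;\le\; \int_{L_\alpha} \frac{\scalar{\nabla f(\gamma_\alpha), \gamma_\alpha'}^2}{\Ric_{g,\mu}(\gamma_\alpha',\gamma_\alpha')} \, d\eta_\alpha .
\]
The only genuinely geometric point is the pointwise bound $\scalar{v,u}^2 \le A(u,u)\cdot\scalar{A^{-1}v,v}$, valid for any positive-definite symmetric $2$-tensor $A$, covector $v$, and unit vector $u$, by Cauchy--Schwarz in the inner product induced by $A$ (writing $v = A w$ and applying Cauchy--Schwarz to $A(w,u)$). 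Taking $A = \Ric_{g,\mu}$, $v = \nabla f$, $u = \gamma_\alpha'(t)$ bounds the last integrand by $\scalar{\Ric_{g,\mu}^{-1}\nabla f,\nabla f}(\gamma_\alpha(t))$, so $\int f^2 d\mu_\alpha \le \int \scalar{\Ric_{g,\mu}^{-1}\nabla f,\nabla f} d\mu_\alpha$. Integrating over $\alpha$ against $\nu$, using $\Var_\mu(f) = \int_S f^2 d\mu = \int_\Lambda \int f^2 d\mu_\alpha \, d\nu(\alpha)$ and that the resulting integral over $S$ is at most that over $M$, gives the assertion.

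Case (1) ($(M,g)$ complete and oriented, $\partial M$ smooth and locally convex, density smooth up to $\partial M$): when $M$ is compact this is precisely the inequality proved in \cite{KolesnikovEMilman-Reilly} via the Reilly-type (Bochner--Witten) integration-by-parts identity together with the non-negative boundary term coming from $\II_{\partial M} \geq 0$. For general complete $M$ one reduces to this by exhausting $M$ by an increasing sequence of relatively compact domains and passing to the limit, using the reductions above to keep all quantities finite. I expect this to be the main obstacle: Theorem \ref{thm:localization} is stated only in the boundaryless setting and so cannot be invoked directly, and an exhaustion of $M$ by compact subdomains need not preserve local convexity of the boundary, so the limiting argument must be arranged with care (approximating $f$ by compactly supported functions, and using completeness and the convexity of $\partial M$ to control the boundary contributions in the Reilly identity along the exhaustion).
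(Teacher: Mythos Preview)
Your treatment of Case (2) is essentially the paper's argument: Klartag's localization reduces to the one-dimensional Brascamp--Lieb inequality on each needle, followed by the Cauchy--Schwarz inequality $\scalar{v,u}^2 \le A(u,u)\,\scalar{A^{-1}v,v}$ with $A = \Ric_{g,\mu}$. One remark: your handling of the integrability hypothesis $\int \abs{f(x)} d(x,x_0)\, d\mu < \infty$ via an unspecified ``standard truncation argument'' is vague. The paper instead observes that $\Ric_{g,\mu} \ge 0$ together with the Riemannian Brunn--Minkowski inequality yields a Borell-type exponential tail bound for $\mu$, so $\int d(x,x_0)^2 d\mu < \infty$; the moment condition then follows from $f \in L^2(\mu)$ by Cauchy--Schwarz, with no truncation needed.

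For Case (1) you correctly identify the obstacle with your proposed approach: a compact exhaustion of $M$ will not in general have locally-convex boundary, so the compact case of \cite{KolesnikovEMilman-Reilly} cannot be invoked on the approximating domains. The paper does \emph{not} proceed by exhaustion. Instead it works on $M$ itself throughout, using that $-\Delta_{g,\mu}$ is essentially self-adjoint on the domain $C^\infty_{c,\mathrm{Neu}}(M)$ of compactly supported smooth functions with vanishing Neumann data. Essential self-adjointness implies that the $L^2(\mu)$-closure of $\Delta_{g,\mu}(C^\infty_{c,\mathrm{Neu}}(M))$ is exactly the mean-zero subspace, so one can choose $u_k \in C^\infty_{c,\mathrm{Neu}}(M)$ with $-\Delta_{g,\mu} u_k \to f$ in $L^2(\mu)$. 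The generalized Reilly formula applied to each compactly supported $u_k$ (the boundary term being non-negative by $\II_{\partial M} \ge 0$) yields $\int (\Delta_{g,\mu} u_k)^2 d\mu \ge \int \scalar{\Ric_{g,\mu}\nabla u_k,\nabla u_k} d\mu$, and then the duality bound
\[
\brac{\int f\, \Delta_{g,\mu} u_k \, d\mu}^2 = \brac{\int \scalar{\nabla f,\nabla u_k} d\mu}^2 \le \int \scalar{\Ric_{g,\mu}\nabla u_k,\nabla u_k} d\mu \int \scalar{\Ric_{g,\mu}^{-1}\nabla f,\nabla f} d\mu
\]
gives the inequality after letting $k \to \infty$. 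The point is that compact support of $u_k$ replaces compactness of the manifold; no domain exhaustion is needed, and the convexity difficulty you flagged never arises.
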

\begin{proof}
We may clearly assume that in addition $\int f d\mu = 0$ and that $\int f^2 d\mu < \infty$. 

Let us first treat Case (2) of the Geometric Convexity Assumptions, namely that $\intr(M) = M \setminus \partial M$ is geodesically convex. Since by definition $\mu(\partial M) = 0$,
we may clearly restrict to $\intr(M)$ and assume that $M$ is without boundary. As explained in the proof of \cite[Lemma 6.13]{EMilman-RoleOfConvexity}, the condition that $\Ric_{g,\mu} \geq 0$ ensures by a Riemannian Borell-type lemma based on the Riemannian Brunn--Minkowski inequality of \cite{CMSInventiones}, that $\mu$ has exponential tail decay, and so in particular $\int d(x,x_0)^2 d\mu(x) < \infty$ for all $x_0 \in M$; indeed, neither orientability nor completeness are required for invoking the latter Brunn--Minkowski inequality --- the only crucial property is geodesic convexity. 

Since $f \in L^2(\mu)$, it follows by Cauchy-Schwarz that $\int \abs{f} d(x,x_0) d\mu < \infty$, and so we may invoke Theorem \ref{thm:localization}, proceeding with the notation used there. Note that for $\nu$-a.e. $\alpha$, since $\int f d\mu_\alpha = 0$, we have by the one-dimensional classical Brascamp--Lieb inequality (\ref{eq:classical-BL}),
\begin{align*}
\int f^2 d\mu_\alpha & = \int_{L_\alpha} f^2(\gamma_\alpha(t)) d\eta_\alpha(t) \\
& \leq \int_{L_\alpha} (V_\alpha''(t))^{-1} (\frac{d}{dt} f(\gamma_\alpha(t)))^2 d\eta_\alpha(t) \\
& \leq \int_{L_\alpha} \Ric_{g,\mu}(\gamma_\alpha'(t),\gamma_\alpha'(t))^{-1} \scalar{\nabla f (\gamma_\alpha(t)),\gamma_\alpha'(t)}^2 d\eta_\alpha(t) .
\end{align*}
Applying Cauchy--Schwarz for the positive-definite form $\Ric_{g,\mu}$,
\[
\scalar{\nabla f (\gamma_\alpha(t)),\gamma_\alpha'(t)}^2 \leq \Ric_{g,\mu}(\gamma_\alpha'(t), \gamma_\alpha'(t)) \cdot \Ric_{g,\mu}^{-1}(\nabla f (\gamma_\alpha(t)),\nabla f (\gamma_\alpha(t))) ,
\]
we deduce that
\[
\int f^2 d\mu_\alpha \leq \int \Ric_{g,\mu}^{-1}(\nabla f,\nabla f) d\mu_\alpha .
\]
Integrating over $\alpha$ with respect to $\nu$, we obtain
\[
\int_{S} f^2 d\mu \leq \int_S \Ric_{g,\mu}^{-1}(\nabla f,\nabla f) d\mu  .
\]
But since $f \equiv 0$ on $M \setminus S$, we obtain
\[
\int_{M} f^2 d\mu \leq \int_M \Ric_{g,\mu}^{-1}(\nabla f,\nabla f) d\mu ,
\]
as asserted.

We now turn to Case (1) of the Geometric Convexity Assumptions, namely that $M$ is oriented, complete, with locally-convex (possibly empty) smooth boundary.
Integration by parts immediately verifies that $-\Delta_{g,\mu}$ is a positive semi-definite symmetric operator on $L^2(\mu)$ with dense domain $C_{c,Neu}^\infty(M)$, the subspace of compactly supported smooth functions satisfying vanishing Neumann boundary conditions. It is known \cite[Chapter 8]{Taylor-PDE-II-Book} that in fact $-\Delta_{g,\mu}$ is essentially self-adjoint on the latter domain, so that its closure in the graph-norm is its unique self-adjoint extension. Since $M$ is connected, it is well-known and easy to see that $E_0$, the orthonormal spectral projection onto the eigenspace of $-\Delta_{g,\mu}$ corresponding to the $0$ eigenvalue, consists of projection onto the one-dimensional subspace of constant functions. The essential self-adjointness then implies that
\[
\overline{-\Delta_{g,\mu}(C_{c,Neu}^\infty(M))} = \Img(E_0)^\perp 
\]
as linear subspaces of $L^2(\mu)$. 
Consequently, for any $f \in L^2(\mu)$ so that $\int f d\mu = 0$, there exists a sequence $u_k \in C_{c,Neu}^\infty(M)$ so that $-\Delta_{g,\mu} u_k \rightarrow f$ in $L^2(\mu)$. This is precisely what is required to repeat the proof of the Brascamp-Lieb inequality from the \emph{compact} Riemannian setting of \cite{KolesnikovEMilman-Reilly}. 

Indeed, the proof there is based on the generalized Reilly formula, which states that for any compactly supported smooth function $u$ on $M$,
\begin{eqnarray*}
& & \int_M \brac{(\Delta_{g,\mu} u)^2 - \norm{\nabla^2 u}^2_{HS} - \scalar{\Ric_{g,\mu} \; \nabla u , \nabla u}} d\mu \\
& = &
\int_{\partial M} H_\mu u_n^2 d\mu_{\partial M} - 2 \int_{\partial M} \scalar{\nabla_{\partial M} u_n , \nabla_{\partial M} u} d\mu_{\partial M}  + \int_{\partial M} \scalar{\II_{\partial M} \;\nabla_{\partial M} u,\nabla_{\partial M} u} d\mu_{\partial M} ~,
\end{eqnarray*}
where $u_n$ denotes the partial derivative of $u$ in the direction of the outer normal $n$ to $\partial M$, $\nabla_{\partial M}$ denotes the induced connection on $\partial M$, $\II_{\partial M}$ denotes the second fundamental form, $H_\mu = tr(\II_{\partial M}) - \scalar{\nabla V,n}$ denotes the generalized weighted mean-curvature, and $\mu_{\partial M} = \exp(-V) d\vol_{\partial M}$ is the boundary measure. 
In particular, when $u$ in addition satisfies vanishing Neumann boundary conditions $u_n \equiv 0$, we see that when $M$ is locally-convex then
\[
\int_M (\Delta_{g,\mu} u)^2  d\mu \geq \int_M  \scalar{\Ric_{g,\mu} \; \nabla u , \nabla u} d\mu .
\]

Recall that we are also assuming that $f \in C^1(M)$, and apply all of the above to the sequence $u_k \in C_{c,Neu}^\infty(M)$. We obtain, after integrating by parts and applying Cauchy-Schwarz as before, that
\begin{align*}
\brac{\int_M f \Delta_{g,\mu} u_k d\mu}^2 & = \brac{\int_M \scalar{\nabla f ,\nabla u_k} d\mu}^2 \\
& \leq \int_M \scalar{\Ric_{g,\mu} \; \nabla u_k, \nabla u_k} d\mu \int_M \scalar{\Ric_{g,\mu}^{-1} \; \nabla f, \nabla f} d\mu \\
& \leq \int_M (\Delta_{g,\mu} u_k)^2  d\mu  \int_M \scalar{\Ric_{g,\mu}^{-1} \; \nabla f, \nabla f} d\mu .
\end{align*}
Letting $k \rightarrow \infty$ and invoking the convergence $-\Delta_{g,\mu} u_k \rightarrow f$ in $L^2(\mu)$, we deduce
\[
\int_M f^2 d\mu \leq \int_M \scalar{\Ric_{g,\mu}^{-1} \; \nabla f, \nabla f} d\mu ,
\]
as asserted. This concludes the proof. 
\end{proof}

Finally, we provide a justification of the Refined Brascamp--Lieb Theorem \ref{thm:refined-BL} for a Hessian metric $g = D^2 \Phi$ obtained from the optimal transport map $x \mapsto \nabla \Phi(x)$ between two log-concave measures. Recall that the challenge here lies in that we have no way of ensuring that the resulting manifold satisfies the Geometric Convexity Assumptions (e.g. completeness or geodesic convexity), which are required for invoking most of the tools we employ in this work.

\begin{thm} \label{thm:appendix-refinedBL}
Let $\mu,\nu$ denote two log-concave probability measures on $\Real^d$.  
Assume that $\mu = \exp(-V) dx$ on $\Real^d$ and that $\nu = \exp(-W) dx$ is supported on the convex set $\Omega \subset \Real^d$. Assume that both densities are positive and smooth on their corresponding supports, and that in addition either $D^2 V > 0$ or $D^2 W > 0$ there. 
Let $\nabla \Phi$ denote the optimal transport map pushing forward $\mu$ onto $\nu$. Then for all $f \in C^1(\Real^d)$,
\[
\Var_{\mu} (f) \le  \int \big\langle  Q^{-1} \nabla f, \nabla f   \big \rangle \ d \mu  ,
\]
where
\[
Q = \frac{1}{2} D^2 V + \frac{1}{2} D^2 \Phi \cdot D^2 W(\nabla \Phi) \cdot D^2 \Phi
+ \frac{1}{4d} \bigl( \nabla V - D^2 \Phi \cdot \nabla W(\nabla \Phi) \bigr) \otimes \bigl( \nabla V - D^2 \Phi \cdot \nabla W(\nabla \Phi)\bigr).
\]
\end{thm}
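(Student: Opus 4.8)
The plan is to realize the asserted inequality as the Generalized Brascamp--Lieb inequality (Theorem~\ref{thm:BL}) for the Hessian metric $g=D^2\Phi$ on $\Real^d$, carried out on a suitable regularization of this metric so as to force the Geometric Convexity Assumptions. By Caffarelli's regularity theory, since $\nu=\exp(-W)\,dx$ is supported on the convex set $\Omega$ and both densities are smooth and positive on their supports, the Brenier potential $\Phi$ is smooth and strongly convex on $\intr(\supp\mu)=\Real^d$. Combining Theorem~\ref{thm:Kol-compute} with Lemma~\ref{lem:H}, the weighted manifold $(\Real^d,g=D^2\Phi,\mu)$ satisfies $\Ric_{g,\mu}\ge Q$ with $Q$ the tensor in the statement; moreover $Q>0$, since by hypothesis either $D^2V>0$, in which case $Q\ge\tfrac12 D^2V>0$, or $D^2W>0$ on $\Omega$, in which case the middle term $\tfrac12 D^2\Phi\cdot D^2W(\nabla\Phi)\cdot D^2\Phi$ is already positive. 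If $(\Real^d,g,\mu)$ satisfied the Geometric Convexity Assumptions, Theorem~\ref{thm:BL} would immediately give $\Var_\mu(f)\le\int\sscalar{\Ric_{g,\mu}^{-1}\nabla f,\nabla f}\,d\mu\le\int\sscalar{Q^{-1}\nabla f,\nabla f}\,d\mu$. The obstruction, as noted in the text, is that $D^2\Phi$ need not be complete on $\Real^d$, nor need $(\Real^d,D^2\Phi)$ be geodesically convex.

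To circumvent this, I would regularize the potential itself: for $n\ge 1$ put
\[
\Phi_n:=\Bigl(1-\tfrac1n\Bigr)\Phi+\tfrac1{2n}\abs{x}^2 ,
\]
which is again smooth and strongly convex on $\Real^d$, with $D^2\Phi_n=(1-\tfrac1n)D^2\Phi+\tfrac1n\,\mbox{\rm{Id}}\ge\tfrac1n\,\mbox{\rm{Id}}$. Thus $g_n:=D^2\Phi_n\ge\tfrac1n g_0$, so $g_n$-distances dominate a fixed multiple of Euclidean distances and $(\Real^d,g_n)$ is a complete Riemannian manifold without boundary; being orientable with smooth positive density $\exp(-(V+\tfrac12\log\det D^2\Phi_n))$ relative to $\vol_{g_n}$, it satisfies Geometric Convexity Assumption~(1). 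Since $\nabla\Phi_n$ is the gradient of a convex function it is the optimal-transport map pushing $\mu$ forward onto $\nu_n:=(\nabla\Phi_n)_*\mu=\exp(-W_n)\,dx$, with $W_n$ smooth on $\nabla\Phi_n(\Real^d)$ via $W_n(\nabla\Phi_n)=V+\log\det D^2\Phi_n$. Applying Theorem~\ref{thm:Kol-compute} and Lemma~\ref{lem:H} to $(\Real^d,g_n,\mu)$ yields $\Ric_{g_n,\mu}\ge Q_n$, where $Q_n$ is obtained from $Q$ by replacing $(\Phi,W)$ with $(\Phi_n,W_n)$; and $Q_n\ge\tfrac12 D^2V>0$ when $D^2V>0$. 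Theorem~\ref{thm:BL} then gives, for every $n$ and every $f\in C^1(\Real^d)$,
\[
\Var_\mu(f)\le\int_{\Real^d}\sscalar{\Ric_{g_n,\mu}^{-1}\nabla f,\nabla f}\,d\mu\le\int_{\Real^d}\sscalar{Q_n^{-1}\nabla f,\nabla f}\,d\mu .
\]

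It remains to let $n\to\infty$. Since $\Phi_n$ is a convex combination of $\Phi$ and $\tfrac12\abs{x}^2$, one has $\Phi_n\to\Phi$ in $C^{\infty}_{loc}(\Real^d)$, hence $\nabla\Phi_n\to\nabla\Phi$ and $W_n\to W$ in $C^{\infty}_{loc}$; as $Q_n$ is a fixed smooth expression in finitely many derivatives of $\Phi_n$ and of $W_n\circ\nabla\Phi_n$, we get $Q_n\to Q$ and hence $Q_n^{-1}\to Q^{-1}$ locally uniformly, the inverses being locally bounded thanks to $Q_n\ge\tfrac12 D^2V$. A routine truncation reduces the claim to $f$ of compact support, for which dominated convergence gives $\int\sscalar{Q_n^{-1}\nabla f,\nabla f}\,d\mu\to\int\sscalar{Q^{-1}\nabla f,\nabla f}\,d\mu$, completing the proof when $D^2V>0$. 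When only $D^2W>0$, the same scheme works after also replacing $\mu$ by $\mu_{\eps}:=c_{\eps}\exp(-V-\eps\abs{x}^2)\,dx$, which makes the regularized potential strictly convex and the corresponding $Q_{n,\eps}$ positive; both regularizations are removed at the end, the positivity of the limiting tensor $Q$ being guaranteed by $D^2W>0$. The main obstacle throughout is the tension between the completeness (or geodesic convexity) demanded by the Geometric Convexity Assumptions and the preservation of the curvature lower bound $\Ric\ge Q_n$; the regularization of $\Phi$ by $\Phi_n$ is designed precisely so that both hold at once while $Q_n\to Q$.
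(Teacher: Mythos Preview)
Your regularization is genuinely different from the paper's. The paper approximates both potentials $V,W$ by smooth $V_n,W_n$ with $cI<D^2V_n,D^2W_n<CI$, and then invokes a global $C^2$ estimate for the Brenier potentials $\Phi_n$ between $\mu_n=e^{-V_n}dx$ and $\nu_n=e^{-W_n}dx$ (from \cite{KolesnikovGlobalSobolevCaffarelli}) to ensure $(\Real^d,D^2\Phi_n)$ is complete; since $W_n$ is uniformly convex \emph{by construction}, the positivity $Q_n>0$ is automatic, and the limit $n\to\infty$ is handled by showing $\nabla\Phi_n(B_R)$ stays in a fixed compact subset of $\intr\Omega$ and passing to a.e.\ convergent subsequences. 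You instead keep $\mu,\nu$ fixed and perturb $\Phi$ directly, so $D^2\Phi_n\ge\tfrac1n I$ gives completeness without any external estimate; the price is that the target measure becomes $\nu_n=(\nabla\Phi_n)_*\mu$, which is exactly the McCann displacement interpolant between $\mu$ and $\nu$ at time $1-\tfrac1n$.

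That is where your argument has a real gap. The inequality $Q_n\ge\tfrac12 D^2V$---which you use both to make $\Ric_{g_n,\mu}>0$ so that Theorem~\ref{thm:BL} applies, and to dominate $Q_n^{-1}$ in the limit---is equivalent to $D^2W_n\ge0$, i.e., to the log-concavity of $\nu_n$. You never address this, and it is not automatic: pushing a log-concave measure forward by the gradient of a convex function does \emph{not} preserve log-concavity in general (e.g.\ $x\mapsto e^x$ sends the Gaussian to the log-normal distribution, which is not log-concave). Whether the special structure of displacement interpolation between \emph{two} log-concave endpoints forces each $\nu_n$ to be log-concave is precisely what you must establish; you should either prove it (for instance by computing $B_t\,D^2W_t(\nabla\Psi_t)\,B_t$ along the interpolation via the twice-differentiated Monge--Amp\`ere equation and checking its non-negativity from $D^2V\ge0$ and $D^2W\ge0$) or give a reference. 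The paper's route sidesteps this issue entirely, since there $W_n$ is uniformly convex by fiat. A smaller point: ``$W_n\to W$ in $C^\infty_{loc}$'' is imprecise because the domains of $W_n$ vary with $n$; what you actually need---and what does follow directly from the differentiated Monge--Amp\`ere identity, expressed purely in terms of $V$ and $\Phi_n$---is the locally uniform convergence $D^2W_n(\nabla\Phi_n)\to D^2W(\nabla\Phi)$ and $\nabla W_n(\nabla\Phi_n)\to\nabla W(\nabla\Phi)$ on $\Real^d$.
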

\begin{proof}
Note that by Theorem \ref{thm:Kol-compute} and Lemma \ref{lem:H}, we know that
\begin{equation} \label{eq:Q-bound}
\Ric_{g,\mu} \ge Q .
\end{equation}
Under the additional assumption that on the corresponding supports
\begin{equation} \label{eq:additional-assumption}
C > D^2 V > c >0 \; , \; C > D^2 W > c > 0 ,
\end{equation}
it was shown in \cite{KolesnikovGlobalSobolevCaffarelli} that the potential $\Phi$ satisfies
\[
\Delta(c,C) > D^2 \Phi > \delta(c,C)>0 ,
\]
implying that $(\Real^d,g = D^2 \Phi)$ is complete (oriented and without boundary), and hence $(\Real^d,g,\mu)$ satisfies the Geometric Convexity Assumptions. The assertion then follows by Theorem \ref{thm:BL-appendix} and the estimate (\ref{eq:Q-bound}). 
 
The general case is obtained by applying an approximation procedure which we outline below; further details may be found in \cite[Lemma 5.2]{KlartagKolesnikov-EigenvalueDistribution}. Without loss of generality we assume that $f$ is a smooth function supported on a closed ball $B_R$.
Choosing an appropriate sequence of smooth $V_n, W_n$ satisfying (\ref{eq:additional-assumption}) and approximating $V,W$ (respectively) and their derivatives up to second order locally uniformly, we may ensure that $D^2 \Phi_n \to D^2 \Phi$ locally uniformly. It follows from the initially treated case that
\begin{equation} \label{eq:almost}
\Var_{\mu_n} (f) \le  \int \big\langle  Q_n^{-1} \nabla f, \nabla f   \big \rangle \ d \mu_n,
\end{equation}
where $\mu_n = \exp(-V_n(x)) dx$ and
\begin{align*}
Q_n & := \frac{1}{2} D^2 V_n + \frac{1}{2} D^2 \Phi_n \cdot D^2 W_n(\nabla \Phi_n) \cdot D^2 \Phi_n
\\&+ \frac{1}{4d} \bigl( \nabla V_n - D^2 \Phi_n \cdot \nabla W_n(\nabla \Phi_n) \bigr) \otimes \bigl( \nabla V_n - D^2 \Phi_n \cdot \nabla W_n(\nabla \Phi_n)\bigr) .
\end{align*}

Next, we observe that the set $\Omega_r := \overline{\cup_n \nabla \Phi_n(B_R)} \subset \Omega$ is compact, and moreover lies in the interior of $\Omega$. 
Indeed, arguing in the contrapositive, we obtain a sequence $B_R \ni x_n \to x_0$  so that $\abs{\nabla \Phi_n(x_n)} \to \infty$ if the former statement is violated, and $\nabla \Phi_n(x_n) \to \partial \Omega$ if the latter one is. However,
\begin{align*}
|\nabla \Phi(x_0) -  \nabla \Phi_n(x_n)| & \le |\nabla \Phi(x_0) - \nabla \Phi_n(x_0)| + | \nabla \Phi_n(x_0) - \nabla \Phi_n(x_n)| \\
& \le |\nabla \Phi(x_0) -  \nabla \Phi_n(x_0)|  +  \sup_{x \in B_R} \|D^2 \Phi_n(x) \| |x_0- x_n| \to 0 .
\end{align*}
This already yields a contradiction in the former case. As for the latter one, we deduce by the already established compactness that $\nabla \Phi(x_0) \in \partial \Omega$, in violation of the strong convexity  (and smoothness) of the transport potential $\Phi$, which ensures that $\Img(\nabla \Phi)$ must be an open (convex) set \cite{Gromov-KahlerManifolds}. 
Recalling our assumption that either $D^2 V > 0$ or $D^2 W > 0$, we can now apply the uniform $C^2$ convergence $V_n \to V$ on $B_R$ and $W_n \to W$ on the compact $\Omega_r$, and deduce that the positive definite matrices $Q_n$ are in fact uniformly positively bounded below on $B_R$.
The proof of the general case would be concluded by applying the dominated convergence theorem in (\ref{eq:almost}), provided we knew that  $Q_n \to Q$ almost everywhere with respect to the Lebesgue measure (after perhaps passing to a subsequence). This is obvious, except for the convergence of the problematic terms $\nabla W_ n(\nabla \Phi_n), D^2 W_n(\nabla \Phi_n)$. To see the convergence of e.g. $D^2 W_n(\nabla \Phi_n)$, write
\begin{align*}
& \int_{B_R} |\partial^2_{x_i x_j} W(\nabla \Phi) - \partial^2_{x_i x_j}  W_{n}(\nabla \Phi_n)| d \mu_n \\
& \le  \int_{B_R} |\partial^2_{x_i x_j} W(\nabla \Phi ) - \partial^2_{x_i x_j}  W( \nabla \Phi_n )| d \mu_n
 +   \int_{\nabla \Phi_n(B_R)}  |\partial^2_{x_i x_j}  W- \partial^2_{x_i x_j}  W_{n}| d \nu_n.
\end{align*}
Using again that $\Omega_r \supset\nabla \Phi_n(B_R)$ is compact and lies in the interior of $\Omega$, we see that the right-hand side converges to zero, and hence (up to passing to a subsequence) $\partial^2_{x_i x_j}  W_{n}(\nabla \Phi_n) \to \partial^2_{x_i x_j}  W(\nabla \Phi)$ almost everywhere. The convergence of $\nabla W_ n(\nabla \Phi_n)$ is handled identically, thereby completing the proof. 
\end{proof}

\bibliographystyle{plain}
\bibliography{../../../ConvexBib}

\def\cprime{$'$} \def\textasciitilde{$\sim$}
\begin{thebibliography}{10}

\bibitem{SpanishBook}
D.~Alonso-Gutierrez and J.~Bastero.
\newblock {\em Approaching the {K}annan--{L}ovasz--{S}imonovits and Variance
  conjectures}, volume 2131 of {\em Lecture Notes in Mathematics}.
\newblock Springer, 2015.

\bibitem{BakryStFlour}
D.~Bakry.
\newblock L'hypercontractivit\'e et son utilisation en th\'eorie des
  semigroupes.
\newblock In {\em Lectures on probability theory ({S}aint-{F}lour, 1992)},
  volume 1581 of {\em Lecture Notes in Math.}, pages 1--114. Springer, Berlin,
  1994.

\bibitem{Bakry-JacobiSemigroups}
D.~Bakry.
\newblock Remarques sur les semigroupes de {J}acobi.
\newblock {\em Ast\'erisque}, (236):23--39, 1996.
\newblock Hommage {\`a} P. A. Meyer et J. Neveu.

\bibitem{BakryEmery}
D.~Bakry and M.~{\'E}mery.
\newblock Diffusions hypercontractives.
\newblock In {\em S\'eminaire de probabilit\'es, XIX, 1983/84}, volume 1123 of
  {\em Lecture Notes in Math.}, pages 177--206. Springer, Berlin, 1985.

\bibitem{BGL-Book}
D.~Bakry, I.~Gentil, and M.~Ledoux.
\newblock {\em Analysis and geometry of {M}arkov diffusion operators}, volume
  348 of {\em Grundlehren der Mathematischen Wissenschaften [Fundamental
  Principles of Mathematical Sciences]}.
\newblock Springer, Cham, 2014.

\bibitem{BermanBerndtsson-RealMongeAmpere}
R.~J. Berman and B.~Berndtsson.
\newblock Real {M}onge-{A}mp\`ere equations and {K}\"ahler-{R}icci solitons on
  toric log {F}ano varieties.
\newblock {\em Ann. Fac. Sci. Toulouse Math. (6)}, 22(4):649--711, 2013.

\bibitem{Besse-EinsteinManifoldsBook}
A.~L. Besse.
\newblock {\em Einstein manifolds}, volume~10 of {\em Ergebnisse der Mathematik
  und ihrer Grenzgebiete (3) [Results in Mathematics and Related Areas (3)]}.
\newblock Springer-Verlag, Berlin, 1987.

\bibitem{BishopInBayleRosales}
R.~L. Bishop.
\newblock Infinitesimal convexity implies local convexity.
\newblock {\em Indiana Univ. Math. J.}, 24:169--172, 1974/75.

\bibitem{BobkovLedouxModLogSobAndPoincare}
S.~G. Bobkov and M.~Ledoux.
\newblock Poincar\'e's inequalities and {T}alagrand's concentration phenomenon
  for the exponential distribution.
\newblock {\em Probab. Theory Related Fields}, 107(3):383--400, 1997.

\bibitem{BobkovLedoux}
S.~G. Bobkov and M.~Ledoux.
\newblock From {B}runn-{M}inkowski to {B}rascamp-{L}ieb and to logarithmic
  {S}obolev inequalities.
\newblock {\em Geom. Funct. Anal.}, 10(5):1028--1052, 2000.

\bibitem{BobkovLedouxWeightedPoincareForHeavyTails}
S.~G. Bobkov and M.~Ledoux.
\newblock Weighted {P}oincar\'e-type inequalities for {C}auchy and other convex
  measures.
\newblock {\em Ann. Probab.}, 37(2):403--427, 2009.

\bibitem{BogachevKolesnikov-OTSurvey}
V.~I. Bogachev and A.~V. Kolesnikov.
\newblock The {M}onge-{K}antorovich problem: achievements, connections, and
  prospects.
\newblock {\em Uspekhi Mat. Nauk}, 67(5(407)):3--110, 2012.

\bibitem{BrascampLiebPLandLambda1}
H.~J. Brascamp and E.~H. Lieb.
\newblock On extensions of the {B}runn-{M}inkowski and {P}r\'ekopa-{L}eindler
  theorems, including inequalities for log concave functions, and with an
  application to the diffusion equation.
\newblock {\em J. Func. Anal.}, 22(4):366--389, 1976.

\bibitem{BrenierMap}
Y.~Brenier.
\newblock Polar factorization and monotone rearrangement of vector-valued
  functions.
\newblock {\em Comm. Pure Appl. Math.}, 44(4):375--417, 1991.

\bibitem{CaffarelliHigherHolderRegularity}
L.~A. Caffarelli.
\newblock Interior {$W^{2,p}$} estimates for solutions of the
  {M}onge-{A}mp\`ere equation.
\newblock {\em Ann. of Math. (2)}, 131(1):135--150, 1990.

\bibitem{CaffarelliStrictlyConvexIsHolder}
L.~A. Caffarelli.
\newblock Some regularity properties of solutions of {M}onge-{A}mp\`ere
  equation.
\newblock {\em Comm. Pure Appl. Math.}, 44(8-9):965--969, 1991.

\bibitem{CaffarelliRegularity}
L.~A. Caffarelli.
\newblock The regularity of mappings with a convex potential.
\newblock {\em J. Amer. Math. Soc.}, 5(1):99--104, 1992.

\bibitem{CorderoKlartag-MomentMeasures}
D.~Cordero-Erausquin and B.~Klartag.
\newblock Moment measures.
\newblock {\em J. Funct. Anal.}, 268(12):3834--3866, 2015.

\bibitem{CMSInventiones}
D.~Cordero-Erausquin, R.~J. McCann, and M.~Schmuckenschl{\"a}ger.
\newblock A {R}iemannian interpolation inequality \`a la {B}orell, {B}rascamp
  and {L}ieb.
\newblock {\em Invent. Math.}, 146(2):219--257, 2001.

\bibitem{EldanKlartagThinShellImpliesSlicing}
R.~Eldan and B.~Klartag.
\newblock Approximately gaussian marginals and the hyperplane conjecture.
\newblock In C.~Houdr\'e, M.~Ledoux, E.~Milman, and M.~Milman, editors, {\em
  Concentration, Functional Inequalities and Isoperimetry}, volume 545 of {\em
  Contemporary Mathematics}, pages 55--68. Amer. Math. Soc., 2011.

\bibitem{Ghoussoub-Book}
N.~Ghoussoub and A.~Moradifam.
\newblock {\em Functional inequalities: new perspectives and new applications},
  volume 187 of {\em Mathematical Surveys and Monographs}.
\newblock American Mathematical Society, Providence, RI, 2013.

\bibitem{Gromov-KahlerManifolds}
M.~Gromov.
\newblock Convex sets and {K}\"ahler manifolds.
\newblock In {\em Advances in differential geometry and topology}, pages 1--38.
  World Sci. Publ., Teaneck, NJ, 1990.

\bibitem{Helffer-DecayOfCorrelationsViaWittenLaplacian}
B.~Helffer.
\newblock Remarks on decay of correlations and {W}itten {L}aplacians,
  {B}rascamp-{L}ieb inequalities and semiclassical limit.
\newblock {\em J. Funct. Anal.}, 155(2):571--586, 1998.

\bibitem{KLS}
R.~Kannan, L.~Lov{\'a}sz, and M.~Simonovits.
\newblock Isoperimetric problems for convex bodies and a localization lemma.
\newblock {\em Discrete Comput. Geom.}, 13(3-4):541--559, 1995.

\bibitem{KlartagUnconditionalVariance}
B.~Klartag.
\newblock A {B}erry-{E}sseen type inequality for convex bodies with an
  unconditional basis.
\newblock {\em Probab. Theory Related Fields}, 45(1):1--33, 2009.

\bibitem{KlartagMomentMap}
B.~Klartag.
\newblock Poincar\'e inequalities and moment maps.
\newblock {\em Ann. Fac. Sci. Toulouse Math.}, 22(1):1--41, 2013.

\bibitem{Klartag-PayneWeinbergerViaMomentMeasures}
B.~Klartag.
\newblock Logarithmically-concave moment measures {I}.
\newblock In {\em Geometric aspects of functional analysis, Israel Seminar
  2011-2013}, volume 2116 of {\em Lecture Notes in Math.}, pages 231--260.
  Springer, Berlin, 2014.

\bibitem{KlartagLocalizationOnManifolds}
B.~Klartag.
\newblock Needle decompositions in {R}iemannian geometry.
\newblock manuscript, arxiv.org/abs/1408.6322, 2014.

\bibitem{KlartagKolesnikov-EigenvalueDistribution}
B.~Klartag and A.~V. Kolesnikov.
\newblock Eigenvalue distribution of optimal transportation.
\newblock {\em Anal. PDE}, 8(1):33--55, 2015.

\bibitem{KolesnikovGlobalSobolevCaffarelli}
A.~V. Kolesnikov.
\newblock On {S}obolev regularity of mass transport and transportation
  inequalities.
\newblock {\em Theory Probab. Appl.}, 57(2):243--264, 2013.

\bibitem{Kolesnikov-HessianMetricsArisingFromOT}
A.~V. Kolesnikov.
\newblock Hessian metrics, {$CD(K,N)$}-spaces, and optimal transportation of
  log-concave measures.
\newblock {\em Discrete Contin. Dyn. Syst.}, 34(4):1511--1532, 2014.

\bibitem{KolesnikovEMilman-Reilly}
A.~V. Kolesnikov and E.~Milman.
\newblock {P}oincar\'e and {B}runn-{M}inkowski inequalities on weighted
  manifolds with boundary.
\newblock submitted, arxiv.org/abs/1310.2526, 2014.

\bibitem{KolesnikovEMilman-HardyKLS}
A.~V. Kolesnikov and E.~Milman.
\newblock Remarks on the {KLS} conjecture and hardy-type inequalities.
\newblock In {\em Geometric aspects of functional analysis, Israel Seminar
  2011-2013}, volume 2116 of {\em Lecture Notes in Math.}, pages 273--292.
  Springer, Berlin, 2014.

\bibitem{Ledoux-Book}
M.~Ledoux.
\newblock {\em The concentration of measure phenomenon}, volume~89 of {\em
  Mathematical Surveys and Monographs}.
\newblock American Mathematical Society, Providence, RI, 2001.

\bibitem{LedouxSpinSystemsRevisited}
M.~Ledoux.
\newblock Logarithmic {S}obolev inequalities for unbounded spin systems
  revisited.
\newblock In {\em S\'eminaire de {P}robabilit\'es, {XXXV}}, volume 1755 of {\em
  Lecture Notes in Math.}, pages 167--194. Springer, Berlin, 2001.

\bibitem{Lichnerowicz1970GenRicciTensor}
A.~Lichnerowicz.
\newblock Vari\'et\'es k\"ahl\'eriennes \`a premi\`ere classe de {C}hern non
  negative et vari\'et\'es riemanniennes \`a courbure de {R}icci
  g\'en\'eralis\'ee non negative.
\newblock {\em J. Differential Geometry}, 6:47--94, 1971/72.

\bibitem{Lions-FiniteDimKahlerEinstein}
P.-L. Lions.
\newblock Two remarks on {M}onge-{A}mp\`ere equations.
\newblock {\em Ann. Mat. Pura Appl. (4)}, 142:263--275 (1986), 1985.

\bibitem{McCannConvexityPrincipleForGases}
R.~J. McCann.
\newblock A convexity principle for interacting gases.
\newblock {\em Adv. Math.}, 128(1):153--179, 1997.

\bibitem{EMilman-RoleOfConvexity}
E.~Milman.
\newblock On the role of convexity in isoperimetry, spectral-gap and
  concentration.
\newblock {\em Invent. Math.}, 177(1):1--43, 2009.

\bibitem{EMilmanGeometricApproachPartI}
E.~Milman.
\newblock Isoperimetric and concentration inequalities - equivalence under
  curvature lower bound.
\newblock {\em Duke Math. J.}, 154(2):207--239, 2010.

\bibitem{EMilmanIsoperimetricBoundsOnManifolds}
E.~Milman.
\newblock Isoperimetric bounds on convex manifolds.
\newblock In C.~Houdr\'e, M.~Ledoux, E.~Milman, and M.~Milman, editors, {\em
  Concentration, Functional Inequalities and Isoperimetry}, volume 545 of {\em
  Contemporary Mathematics}, pages 195--208. Amer. Math. Soc., 2011.

\bibitem{NguyenDimensionalBrascampLieb}
V.~H. Nguyen.
\newblock Dimensional variance inequalities of {B}rascamp-{L}ieb type and a
  local approach to dimensional {P}r\'ekopa's theorem.
\newblock {\em J. Funct. Anal.}, 266(2):931--955, 2014.

\bibitem{PayneWeinberger}
L.~E. Payne and H.~F. Weinberger.
\newblock An optimal {P}oincar\'e inequality for convex domains.
\newblock {\em Arch. Rational Mech. Anal.}, 5:286--292, 1960.

\bibitem{QianGradientEstimateWithBoundary}
Z.~Qian.
\newblock A gradient estimate on a manifold with convex boundary.
\newblock {\em Proc. Roy. Soc. Edinburgh Sect. A}, 127(1):171--179, 1997.

\bibitem{RockafellarBook}
R.~T. Rockafellar.
\newblock {\em Convex analysis}.
\newblock Princeton Mathematical Series, No. 28. Princeton University Press,
  Princeton, N.J., 1970.

\bibitem{SodinLpIsoperimetry}
S.~Sodin.
\newblock An isoperimetric inequality on the {$\ell_p$} balls.
\newblock {\em Ann. Inst. H. Poincar\'e Probab. Statist.}, 44(2):362--373,
  2008.

\bibitem{Taylor-PDE-II-Book}
M.~E. Taylor.
\newblock {\em Partial differential equations {II}. {Q}ualitative studies of
  linear equations}, volume 116 of {\em Applied Mathematical Sciences}.
\newblock Springer, New York, second edition, 2011.

\bibitem{VillaniTopicsInOptimalTransport}
C.~Villani.
\newblock {\em Topics in optimal transportation}, volume~58 of {\em Graduate
  Studies in Mathematics}.
\newblock American Mathematical Society, Providence, RI, 2003.

\bibitem{VillaniOldAndNew}
C.~Villani.
\newblock {\em Optimal transport - old and new}, volume 338 of {\em Grundlehren
  der Mathematischen Wissenschaften [Fundamental Principles of Mathematical
  Sciences]}.
\newblock Springer-Verlag, Berlin, 2009.

\bibitem{WangZhu-KahlerEinstein}
X.-J. Wang and X.~Zhu.
\newblock K\"ahler-{R}icci solitons on toric manifolds with positive first
  {C}hern class.
\newblock {\em Adv. Math.}, 188(1):87--103, 2004.

\end{thebibliography}

\end{document}